\documentclass[11pt]{article}

\usepackage[margin=1in]{geometry}
\usepackage[T1]{fontenc}
\usepackage[utf8]{inputenc}
\usepackage{amsmath,amssymb,amsthm,mathtools,bm}
\usepackage{enumitem}
\usepackage{microtype}
\usepackage{xcolor}
\usepackage[round,authoryear]{natbib}
\usepackage[colorlinks=true,linkcolor=blue,citecolor=blue,urlcolor=blue,pagebackref=true]{hyperref}
\renewcommand*{\backrefalt}[4]{%
  \ifcase #1\relax
  \or (cited on p.~#2)%
  \else (cited on pp.~#2)%
  \fi}
\numberwithin{equation}{section}
\mathtoolsset{showonlyrefs=true}

\newtheorem{theorem}{Theorem}[section]
\newtheorem{proposition}[theorem]{Proposition}
\newtheorem{lemma}[theorem]{Lemma}
\newtheorem{corollary}[theorem]{Corollary}
\newtheorem{condition}[theorem]{Condition}
\theoremstyle{definition}

\theoremstyle{remark}

\title{On the minimax-rate optimality of approximate Bayesian computation in nonparametric problems}
\author{Hien Duy Nguyen\\
\small School of Computing, Engineering and Mathematical Sciences, La Trobe University,\\
\small Bundoora 3086, Victoria, Australia\\
\small Institute of Mathematics for Industry, Kyushu University, Nishi Ward, Fukuoka 819-0395, Japan\\
\small \texttt{h.nguyen5@latrobe.edu.au}}
\date{}

\begin{document}
\maketitle

\begin{abstract}
Approximate Bayesian computation (ABC) replaces likelihood evaluation by simulation and comparison of observed and synthetic data. We establish minimax-rate guarantees for nonparametric ABC under random-series priors with simulable finite-dimensional coordinates. The contraction theorem uses local prior mass, bounds on ABC acceptance probabilities, and control of prior mass outside a sieve. In fixed-design orthogonal-series regression with centered $g$-and-$k$ errors, an infinite Gaussian series prior with a compact scale hyperprior yields minimax-rate contraction and a minimax-rate clipped posterior mean. In compound Poisson decompounding, only random sums are observed and the target is the underlying jump density. With an unknown count intensity in a fixed compact subinterval of $(0,\pi/2)$, we prove stability of the zero-count-augmented trigonometric population summaries and use a square-root Gaussian series prior on the space of probability density functions. Over bounded periodic Sobolev classes of smoothness $\alpha>d/2$, a polynomially enlarged synthetic sample yields ABC contraction at rate $n^{-\alpha/(2\alpha+d)}$ and posterior mean squared risk of order $n^{-2\alpha/(2\alpha+d)}$, matching a lower bound for the aggregate-observation model. Rejection-ABC Monte Carlo approximations inherit these rates under sufficient sampling budgets.
\end{abstract}

\noindent\textbf{Keywords.} Approximate Bayesian computation; decompounding; compound Poisson model; $g$-and-$k$ error regression; random series prior; minimax risk; posterior contraction.

\section{Introduction}
\label{sec:introduction}

\subsection{Approximate Bayesian computation}

Approximate Bayesian computation (ABC) is a likelihood-free approach for Bayesian inference in models for which simulating data is feasible but evaluating the likelihood is impossible or computationally prohibitive. In its basic rejection form, one draws a parameter $\theta$ from a prior measure, simulates synthetic data $Z$ from the model defined by $\theta$, computes a discrepancy between $Z$ and the observed data $X$, and retains $\theta$ if the discrepancy is below a tolerance. The method has its origins in population genetics and coalescent models \citep{Tavare1997,Pritchard1999}, with regression-adjusted ABC introduced by \citet{BeaumontZhangBalding2002}. Broader accounts of rejection, importance-sampling, Markov chain Monte Carlo, sequential Monte Carlo, regression-adjusted, synthetic-likelihood, and discrepancy-based variants are given by \citet{MarinPudloRobertRyder2012}, \citet{KarabatsosLeisen2018}, and \citet{SissonFanBeaumont2018}.

Let $S_n^X:\mathcal X_n\to\mathcal S_n$ and $S_n^Z:\mathcal Z_n\to\mathcal S_n$ be measurable observed and synthetic summary maps, and let $\rho_n$ be a measurable discrepancy on $\mathcal S_n$. With the indicator kernel and tolerance $\tau_n$, the ideal ABC posterior is
\begin{equation}
  \Pi_n^{\mathrm{ABC}}(\mathrm d\theta\mid X)
  \propto
  \Pi_n(\mathrm d\theta)\,\mathrm{Q}_\theta^{(n)}\left\{\rho_n(S_n^Z(Z),S_n^X(X))\le \tau_n\right\},
  \label{eq:intro-abc-posterior}
\end{equation}
where $\Pi_n$ is the prior and $\mathrm Q_\theta^{(n)}$ is the simulator law; the probability is over an independent draw $Z$, with $X$ held fixed. The synthetic sample size may differ from the observed sample size. The empirical rejection-ABC posterior assigns equal mass to the accepted parameter draws and uses a fallback measure when there are no acceptances.

\subsection{Nonparametric problems and minimax rates}

We study two illustrative nonparametric problems. The first is fixed-design orthogonal-series regression with non-Gaussian errors. We use a centered $k=0$ subfamily of the quantile-defined $g$-and-$k$ distribution introduced by \citet{HaynesMacGillivrayMengersen1997} and studied computationally by \citet{RaynerMacGillivray2002}; related likelihood-free $g$-and-$k$ applications appear in \citet{BerntonJacobGerberRobert2019} and \citet{NguyenArbelLuForbes2020}. The regression simulator evaluates the quantile map directly, whereas likelihood evaluation requires numerical inversion of that map. Orthogonal coefficient summaries permit estimation of the regression sequence over Sobolev ellipsoids at squared $\ell^2$-risk rate $n^{-2\alpha/(2\alpha+1)}$.

The second problem is compound Poisson decompounding: an unknown jump density on $[0,1]^d$ must be recovered from totals of unobserved Poisson numbers of jumps; see \citet[Section~1.2]{CekanaviciusNovak2025} for the compound Poisson construction. Nonparametric decompounding is an established inverse problem; kernel estimation is studied by \citet{VanEsGugushviliSpreij2007}, and Bayesian analyses are given by \citet{GugushviliVanderMeulenSpreij2015,GugushviliVanderMeulenSpreij2018}. ABC for fitting insurance loss models from aggregate data is developed by \citet{GoffardLaub2021}. We consider a bounded periodic Sobolev class of jump densities, with known smoothness $\alpha>d/2$ and an unknown Poisson intensity $\beta_0$ in a fixed compact subinterval of $(0,\pi/2)$. The observed-data likelihood involves a mixture of convolution densities, while synthetic totals are generated from the count and jump distributions. We establish the squared $\mathcal L^2$-risk rate $n^{-2\alpha/(2\alpha+d)}$ for estimating the latent jump density, uniformly over the intensity interval, with a matching lower bound based on the aggregate observations.

\subsection{Main contributions}

The paper makes three contributions. First, it gives a generic ABC contraction theorem that bounds ABC acceptance probabilities for parameters away from the truth on a measurable subset $\mathcal G_n$ of the parameter space, called a prior sieve, whose complement has sufficiently small prior probability. Under these conditions, the prior need not be conditioned to lie in the target class; it may also be fixed, provided it satisfies a prior-sieve bound. The required local prior mass is bounded below by $\exp\{-C K_n\}$ at the target radius $\epsilon_n$, where $K_n$ is the effective summary dimension. The argument is analogous to the prior-mass and sieve balance in nonparametric posterior-contraction theory; compare \citet[Chapter 8]{GhosalVanderVaart2017} and \citet[Chapters 1--3]{Castillo2024}. Random-series priors and adaptation under ordinary likelihoods are treated in \citet{GhosalLemberVanderVaart2003} and \citet{ShenGhosal2015}.

Second, for orthogonal-series regression we use the fixed infinite series
\begin{equation}
  \theta_j=\Lambda j^{-\alpha-1/2}G_j,\qquad j\ge1,
\end{equation}
where the $G_j$ are independent standard Gaussian variables and $\Lambda$ has a compactly supported hyperprior. Here, the prior does not depend on $n$ or on the Sobolev radius $R$ defining the target class. On exponentially likely tail sieves, sequences far from the truth have correspondingly distant summary centers. For centered $g$-and-$k$ errors, the ideal ABC posterior contracts at the minimax rate, a clipped posterior mean attains the corresponding squared-risk rate, and a matching lower bound follows from a local shift-Kullback--Leibler inequality.

Third, for decompounding we combine trigonometric summaries and the proportion of zero-count periods with a square-root series prior for the jump density and an independent prior for the unknown count intensity. If $g$ is a nonzero frequency-$J$ trigonometric polynomial, then
\begin{equation}
  p_g=\frac{g^2}{\lVert g\rVert_{\mathcal L^2}^2}
\end{equation}
is a density with frequencies at most $2J$. Our use of square-root density coordinates is related to the computational constructions of \citet{HolbrookLanStreetsShahbaba2020}; an alternative route based on exponential-series density priors is studied by \citet{Scricciolo2006}. We prove a uniform inverse bound relating the augmented aggregate-summary centers to the low-frequency jump-density coefficients when the intensity is unknown. Exact jump simulation uses a deterministic rejection envelope, and compound Poisson simulation then produces the totals. Together with a polynomially enlarged synthetic sample, these properties give minimax-rate ABC contraction and posterior mean risk for the jump density. The lower bound restricts the model to one intensity value and uses the fixed-intensity Kullback--Leibler comparison in \citet[Lemma 1]{GugushviliVanderMeulenSpreij2015}.

\subsection{Relation to the ABC literature}
\label{sec:abc-literature}

ABC asymptotic theory has primarily studied posterior concentration, limiting shape, and Monte Carlo efficiency in fixed-dimensional settings. \citet{FrazierMartinRobertRousseau2018} give general concentration and limiting-shape results for summary-based ABC. \citet{LiFearnhead2018a} show that regression adjustment can recover calibrated uncertainty under suitable bandwidth choices, and \citet{LiFearnhead2018b} analyze Monte Carlo efficiency. Appendix~\ref{app:parametric} gives a fixed-dimensional minimax benchmark.

A second direction studies full-data or discrepancy-based ABC. Wasserstein, Kullback--Leibler, energy, surrogate-posterior, and integral-probability-semimetric approaches are developed by \citet{BerntonJacobGerberRobert2019}, \citet{JiangWuWong2018}, \citet{NguyenArbelLuForbes2020}, \citet{ForbesNguyenNguyenArbel2022}, and \citet{LegramantiDuranteAlquier2025}. \citet{NguyenNguyenArbelForbes2025} establish concentration under weak discrepancy assumptions. Here, we ask when summary-based ABC can reproduce the classical minimax rates of nonparametric estimation.

A third direction concerns high-dimensional summaries. \citet{Blum2010}, \citet{BarberVossWebster2015}, and \citet{NottOngFanSisson2018} show that rejection or kernel ABC deteriorates with summary dimension. Here $K_n\asymp n\epsilon_n^2$ diverges, and the normalizer lower bound gives the sufficient prior-rejection condition $N_n\exp\{-\kappa K_n\}\to\infty$. This is a sufficient transfer guarantee but does not provide a computational lower bound. MCMC-ABC \citep{MarjoramMolitorPlagnolTavare2003}, SMC-ABC \citep{BeaumontCornuetMarinRobert2009,DelMoralDoucetJasra2012}, regression-adjusted ABC \citep{BeaumontZhangBalding2002,LiFearnhead2018a}, and synthetic-likelihood algorithms \citep{Wood2010,FrazierNottDrovandiKohn2022} may reduce the simulation cost but require separate sampler-specific analyses.

\subsection{Manuscript outline}

Section~\ref{sec:notation} defines the ABC posterior objects, and Section~\ref{sec:meta} gives the contraction and Monte Carlo transfer results. Section~\ref{sec:gk-regression} treats $g$-and-$k$ error regression, Section~\ref{sec:density} treats nonparametric decompounding, and Section~\ref{sec:discussion} discusses limitations and extensions. The appendices collect auxiliary technical results and proofs. Appendix~\ref{app:parametric} gives minimax-rate results in the parametric setting.

\section{Technical preliminaries}
\label{sec:notation}

Let $(\Omega,\mathfrak F,\mathrm P)$ be a sufficiently rich probability space supporting all observed data, simulator outputs, prior samples, and Monte Carlo randomness. Write $\mathrm E$ for expectation under $\mathrm P$. Subscripts specify the probability law or conditioning and are omitted when it is clear from context.  For vectors in $\mathbb R^K$, $\left\lVert\cdot\right\rVert_2$ denotes the Euclidean norm and $\mathbb S^{K-1}=\{u\in\mathbb R^K:\left\lVert u\right\rVert_2=1\}$ denotes the unit sphere.  For square-summable sequences, $\left\lVert\theta\right\rVert_{\ell^2}^2=\sum_{j=1}^{\infty}\theta_j^2$.  For a sequence $x=(x_1,x_2,\ldots)$ and $K\ge1$, write $\bm x_K=(x_1,\ldots,x_K)$ and $\bm x_{>K}=(x_{K+1},x_{K+2},\ldots)$.  For functions on a measurable set $\mathcal X$ with Lebesgue measure, $\left\lVert f\right\rVert_{\mathcal L^2}^2=\int_{\mathcal X} f(x)^2\,\mathrm dx$; when the argument is a function and no ambiguity can arise, we also write $\lVert f\rVert_2=\lVert f\rVert_{\mathcal L^2}$.  We write $a_n\lesssim b_n$ when $a_n\le Cb_n$ for a constant $C$ independent of $n$, $a_n\gtrsim b_n$ when $b_n\lesssim a_n$, and $a_n\asymp b_n$ when both inequalities hold.  We use $o(\cdot)$ and $O(\cdot)$ in their usual Landau sense.

For probability measures $\mathrm P,\mathrm Q$ on a common measurable space $(\mathcal Y,\mathfrak Y)$, define
\begin{equation}
  \mathrm{TV}(\mathrm P,\mathrm Q)=\sup_{A\in\mathfrak Y}|\mathrm P(A)-\mathrm Q(A)|,
  \qquad
  \mathrm{KL}(\mathrm P,\mathrm Q)=\int_{\mathcal Y}\log\left(\frac{\mathrm d\mathrm P}{\mathrm d\mathrm Q}(y)\right)\mathrm P(\mathrm dy)
  \label{eq:divergence-definitions}
\end{equation}
when $\mathrm P\ll \mathrm Q$, and set $\mathrm{KL}(\mathrm P,\mathrm Q)=\infty$ otherwise. When $\mathrm P$ and $\mathrm Q$ admit densities $p$ and $q$ with respect to a common dominating measure, we also write $\mathrm{TV}(p,q)$ and $\mathrm{KL}(p,q)$.

For a parameter $g$ (written as $f_0$, $\theta_0$, or $(\beta_0,p_0)$ according to the model), write $\mathrm P_g^{(n)}$ and $\mathrm E_g^{(n)}$ for probability and expectation under the observed model indexed by $n$. Write $\mathrm P_g$ and $\mathrm E_g$ for the corresponding probability and expectation for one observation.

Let $\Theta$ be a separable metric parameter space with metric $d$, and let $\mathfrak B(\Theta)$ denote its Borel $\sigma$-field.  For each $n$, let $(\mathcal X_n,\mathfrak X_n)$ and $(\mathcal Z_n,\mathfrak Z_n)$ be the observed and synthetic data spaces.  Let $\{\mathrm P_\theta^{(n)}:\theta\in\Theta\}$ be a probability kernel from $(\Theta,\mathfrak B(\Theta))$ to $(\mathcal X_n,\mathfrak X_n)$, and let $\{\mathrm Q_\theta^{(n)}:\theta\in\Theta\}$ be a simulator kernel from $(\Theta,\mathfrak B(\Theta))$ to $(\mathcal Z_n,\mathfrak Z_n)$.  Let $X^{(n)}$ denote the observed data and, for each $\theta\in\Theta$, let $Z^{(n)}\sim\mathrm Q_\theta^{(n)}$ denote an independent synthetic dataset.  In the usual matched-sample setting, $\mathrm Q_\theta^{(n)}=\mathrm P_\theta^{(n)}$; allowing a separate simulator kernel also covers the enlarged synthetic sample used in Section~\ref{sec:density}.  The simulator is assumed available even when its likelihood density is unavailable.  Let
\begin{equation}
  S_n^X:(\mathcal X_n,\mathfrak X_n)\to(\mathcal S_n,\mathfrak S_n),
  \qquad
  S_n^Z:(\mathcal Z_n,\mathfrak Z_n)\to(\mathcal S_n,\mathfrak S_n)
\end{equation}
be measurable observed and synthetic summary maps, where $(\mathcal S_n,\mathfrak S_n)$ is equipped with a jointly measurable metric or semimetric $\rho_n$, typically induced by a norm.  The two maps may coincide when the observed and synthetic data have the same structure.  For a tolerance $\tau_n>0$, the indicator-kernel ABC likelihood is
\begin{equation}
  L_n^{\mathrm{ABC}}(\theta;X)
  =\mathrm Q_\theta^{(n)}\left\{\rho_n(S_n^Z(Z^{(n)}),S_n^X(X^{(n)}))\le \tau_n\right\}.
  \label{eq:abc-likelihood-general}
\end{equation}
Given a prior $\Pi_n$ on $\Theta$, define
\begin{equation}
  D_n(X)=\int_\Theta L_n^{\mathrm{ABC}}(\theta;X)\Pi_n(\mathrm d\theta).
\end{equation}
On the event $D_n(X)>0$, the ideal ABC posterior is
\begin{equation}
  \Pi_n^{\mathrm{ABC}}(B\mid X)
  =\frac{\int_B L_n^{\mathrm{ABC}}(\theta;X)\Pi_n(\mathrm d\theta)}{D_n(X)}.
  \label{eq:abc-posterior-general}
\end{equation}
Fix a deterministic reference value $\theta^\dagger\in\Theta$ and set $\Pi_n^{\mathrm{ABC}}(\cdot\mid X)=\delta_{\theta^\dagger}$ when $D_n(X)=0$.  We call $\delta_{\theta^\dagger}$ the fallback measure.  In \eqref{eq:abc-likelihood-general}, the probability is taken only over the independent simulator draw $Z^{(n)}$, with the observed value $X$ fixed.  Conditional on $X$, $D_n(X)$ is the probability that one parameter drawn from $\Pi_n$ and one synthetic dataset generated from that parameter are accepted.

For rejection ABC, generate independent prior samples $\theta_i\sim\Pi_n$ and synthetic data $Z_i^{(n)}\sim \mathrm Q_{\theta_i}^{(n)}$, $i=1,\ldots,N_n$, conditionally independently given $X$.  For notational economy, write $N=N_n$ in the subscripts below.  Let
\begin{equation}
  I_i=\mathbf{1}\left\{\rho_n(S_n^Z(Z_i^{(n)}),S_n^X(X^{(n)}))\le \tau_n\right\},
  \qquad A_{n,N}=\sum_{i=1}^{N_n}I_i.
  \label{eq:mc-objects-general}
\end{equation}
Conditional on the observed data $X$, write $\mathrm{P}_{\mathrm{MC}}(\cdot\mid X)$ and $\mathrm{E}_{\mathrm{MC}}(\cdot\mid X)$ for probability and expectation under the joint law of the prior draws, simulator draws, and indicators $\{(\theta_i,Z_i^{(n)},I_i):1\le i\le N_n\}$ generated above.  When either operator is nested inside an outer expectation over the observed data, the explicit conditioning argument on $X$ is suppressed when no ambiguity can arise.  When $A_{n,N}\ge1$, the empirical ABC posterior is
\begin{equation}
  \widehat \Pi_{n,N}^{\mathrm{ABC}}(B\mid X)=\frac{\sum_{i=1}^{N_n}I_i\mathbf{1}\{\theta_i\in B\}}{A_{n,N}}.
  \label{eq:empirical-posterior-general}
\end{equation}
When $A_{n,N}=0$, we set $\widehat\Pi_{n,N}^{\mathrm{ABC}}(\cdot\mid X)=\delta_{\theta^\dagger}$.

Under the minimax framework of \citet[Section 6.3]{GineNickl2016}, let $(\mathcal F_n)$ be a sequence of model or parameter classes with metric $d$. A sequence $\epsilon_n\to0$ is a minimax rate in the metric $d$ over $(\mathcal F_n)$ if
\begin{equation}
  \inf_{\widehat f_n}\sup_{f_0\in\mathcal F_n} \mathrm{E}_{f_0}^{(n)}d^2(\widehat f_n,f_0)\asymp \epsilon_n^2,
\end{equation}
where the infimum is over all measurable estimators.  Under the contraction convention adopted here, an ideal ABC posterior is minimax-rate optimal if there exists $M_0<\infty$ such that, for every fixed $M\ge M_0$,
\begin{equation}
  \sup_{f_0\in\mathcal F_n}\mathrm{E}_{f_0}^{(n)}\Pi_n^{\mathrm{ABC}}\left\{f:d(f,f_0)>M\epsilon_n\mid X\right\}\to0.
\end{equation}
If $\widehat f_n^{\mathrm{ABC}}=\int_{\Theta} f\,\Pi_n^{\mathrm{ABC}}(\mathrm df\mid X)$ is the ABC posterior mean, then it is minimax-rate optimal when
\begin{equation}
  \sup_{f_0\in\mathcal F_n}\mathrm{E}_{f_0}^{(n)}d^2(\widehat f_n^{\mathrm{ABC}},f_0)=O(\epsilon_n^2).
\end{equation}
Whenever a posterior mean is an element of an infinite-dimensional Hilbert space, the integral is interpreted as the Bochner integral of the identity map under the convention of \citet[Section 2.1]{LedouxTalagrand1991}.

\section{Abstract ABC contraction and Monte Carlo transfer}
\label{sec:meta}

Let $K_n\to\infty$ be an effective dimension and let $\epsilon_n\to0$ be a target rate in the metric $d$. In the applications below, $K_n\asymp n\epsilon_n^2$.

\begin{condition}[Abstract local-mass and prior-sieve conditions]
\label{ass:meta}
Let $\mathcal F_n\subset\Theta$ be the target class. There exist constants $C_\Pi,C_L,C_D,C_S,C_G,c>0$ and $M_\star<\infty$, events $\mathcal E_n=\mathcal E_n(f_0)$, measurable local sets $\mathcal B_n(f_0)\subset\Theta$, measurable prior sieves $\mathcal G_n\subset\Theta$, and prior measures $\Pi_n$ such that, for all sufficiently large $n$, uniformly over $f_0\in\mathcal F_n$:
\begin{enumerate}[label=(B\arabic*)]
\item $\mathrm P_{f_0}^{(n)}(\mathcal E_n^c)\le \exp\{-cK_n\}$.
\item $\Pi_n\{\mathcal B_n(f_0)\}\ge \exp\{-C_\Pi K_n\}$.
\item For every $f\in\mathcal B_n(f_0)$, $d(f,f_0)\le C_L\epsilon_n$ and, on $\mathcal E_n$,
\begin{equation}
  L_n^{\mathrm{ABC}}(f;X)\ge \exp\{-C_LK_n\}.
\end{equation}
\item For every fixed $M\ge M_\star$, there is $n_0(M)<\infty$ such that, for all $n\ge n_0(M)$, on $\mathcal E_n$,
\begin{equation}
  \sup_{\substack{f\in\mathcal G_n:\ d(f,f_0)>M\epsilon_n}}
  L_n^{\mathrm{ABC}}(f;X)
  \le \exp\{-C_D(M-C_S)^2K_n\}.
  \label{eq:meta-sieve-separation}
\end{equation}
\item $\Pi_n(\mathcal G_n^c)\le\exp\{-(C_\Pi+C_L+C_G)K_n\}$.
\end{enumerate}
\end{condition}

Condition (B1) controls $\mathrm P_{f_0}^{(n)}(\mathcal E_n^c)$, and (B2)--(B3) give a lower bound for the ABC normalizer $D_n(X)$ on $\mathcal E_n$. Condition (B4) bounds ABC acceptance probabilities for parameters farther than $M\epsilon_n$ from $f_0$ on an exponentially likely prior sieve, while condition (B5) controls the remaining prior mass using $0\le L_n^{\mathrm{ABC}}\le1$. If $\mathcal G_n=\operatorname{supp}(\Pi_n)$, then (B5) holds.

In normed-summary applications, let $s_n(f)$ denote the deterministic summary center and suppose $\tau_n\le A\epsilon_n$. On an event where $\rho_n\{S_n^X(X),s_n(f_0)\}\le C_X\epsilon_n$, acceptance implies
\begin{equation}
\begin{aligned}
  \rho_n\{S_n^Z(Z),s_n(f)\}
  &\ge \rho_n\{s_n(f),s_n(f_0)\}
      -\rho_n\{S_n^Z(Z),S_n^X(X)\}
      -\rho_n\{S_n^X(X),s_n(f_0)\}.
\end{aligned}
\label{eq:meta-separation-explanation}
\end{equation}
Thus, when the center of a sieve element is far from the true center, acceptance is possible only if the synthetic summary makes a comparably large deviation from its own center.

\begin{theorem}[Ideal ABC contraction]
\label{thm:meta-ideal}
Under Condition~\ref{ass:meta}, there exists $M_0<\infty$ such that, for every fixed $M\ge M_0$, there is $\gamma_M>0$ for which, on $\mathcal E_n$ and for all sufficiently large $n$,
\begin{equation}
  \Pi_n^{\mathrm{ABC}}\left\{f:d(f,f_0)>M\epsilon_n\mid X\right\}
  \le 2e^{-\gamma_MK_n}.
  \label{eq:meta-ideal-on-event}
\end{equation}
Consequently,
\begin{equation}
  \sup_{f_0\in\mathcal F_n}\mathrm E_{f_0}^{(n)}
  \Pi_n^{\mathrm{ABC}}\left\{f:d(f,f_0)>M\epsilon_n\mid X\right\}\to0.
  \label{eq:meta-ideal-contraction}
\end{equation}
\end{theorem}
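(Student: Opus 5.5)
The plan is the standard prior-mass/sieve decomposition of the ABC posterior ratio, carried out on $\mathcal E_n$, with a final averaging step using (B1).

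\textbf{Normalizer.} On $\mathcal E_n$, restrict the integral defining $D_n(X)$ to $\mathcal B_n(f_0)$ and use (B2)--(B3):
\begin{equation}
  D_n(X)\ge \int_{\mathcal B_n(f_0)}L_n^{\mathrm{ABC}}(f;X)\,\Pi_n(\mathrm df)\ge \exp\{-(C_\Pi+C_L)K_n\}>0 .
\end{equation}
In particular, the fallback measure is never used on $\mathcal E_n$.

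\textbf{Numerator.} Write $U_M=\{f:d(f,f_0)>M\epsilon_n\}$ and split the numerator over $U_M\cap\mathcal G_n$ and $U_M\cap\mathcal G_n^c$.

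On $U_M\cap\mathcal G_n$, (B4) bounds the integrand pointwise once $n\ge n_0(M)$, so this piece is at most $\exp\{-C_D(M-C_S)^2K_n\}$.

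On $\mathcal G_n^c$, use $L_n^{\mathrm{ABC}}\le1$ and (B5), which give at most $\exp\{-(C_\Pi+C_L+C_G)K_n\}$.

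Dividing by the normalizer bound yields
\begin{equation}
  \Pi_n^{\mathrm{ABC}}(U_M\mid X)\le \exp\{-[C_D(M-C_S)^2-C_\Pi-C_L]K_n\}+\exp\{-C_GK_n\}.
\end{equation}

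\textbf{Choice of constants.} Choose $M_0\ge M_\star$ large enough that
\begin{equation}
  C_D(M_0-C_S)^2\ge C_\Pi+C_L+C_G .
\end{equation}
This is possible because $M_0$ may be taken larger than $C_S$. For $M\ge M_0$ the first exponent is then at least $C_GK_n$, since $(M-C_S)^2$ is increasing for $M>C_S$. Setting $\gamma_M=C_G$ (or, more sharply, $\gamma_M=\min\{C_G,\,C_D(M-C_S)^2-C_\Pi-C_L\}$) gives the bound $2e^{-\gamma_MK_n}$ for all $n\ge n_0(M)$, together with the threshold at which (B1)--(B5) hold.

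\textbf{Expectation bound.} Since the posterior probability is at most $1$, split on $\mathcal E_n$ and its complement:
\begin{equation}
  \mathrm E_{f_0}^{(n)}\Pi_n^{\mathrm{ABC}}(U_M\mid X)\le 2e^{-\gamma_MK_n}+\mathrm P_{f_0}^{(n)}(\mathcal E_n^c)\le 2e^{-\gamma_MK_n}+e^{-cK_n}.
\end{equation}
This bound is uniform in $f_0\in\mathcal F_n$ because all constants in Condition~\ref{ass:meta} are uniform, and it tends to zero because $K_n\to\infty$.

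The only delicate points are bookkeeping. First, the threshold $n_0(M)$ in (B4) depends on $M$, which is why the statement is for each fixed $M$. Second, measurability of $\Pi_n^{\mathrm{ABC}}(U_M\mid X)$ in $X$ must hold so the expectation makes sense. This follows from the joint measurability of $\rho_n$ and the kernels, via Fubini applied to $(\theta,X)\mapsto L_n^{\mathrm{ABC}}(\theta;X)$. I expect no substantive obstacle beyond this.
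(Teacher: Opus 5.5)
Your proposal is correct and follows essentially the same route as the paper's proof. Both arguments lower-bound the normalizer by $e^{-(C_\Pi+C_L)K_n}$ on $\mathcal E_n$ via (B2)--(B3), split the numerator over $\mathcal G_n$ and $\mathcal G_n^c$ using (B4)--(B5), make the same choice of $M_0$ and $\gamma_M$, and finish by adding $\mathrm P_{f_0}^{(n)}(\mathcal E_n^c)\le e^{-cK_n}$. Your remarks on the $M$-dependent threshold $n_0(M)$ and on measurability of the posterior mass in $X$ make explicit points the paper leaves implicit, without changing the argument.
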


\begin{proof}
See Appendix~\ref{app:meta-proofs}.
\end{proof}

\begin{corollary}[Normalizer lower bound]
\label{cor:meta-denom}
Under Condition~\ref{ass:meta}, there exist $\kappa<\infty$ and $c'\in(0,c)$ such that
\begin{equation}
  \sup_{f_0\in\mathcal F_n}\mathrm P_{f_0}^{(n)}
  \left\{D_n(X)<e^{-\kappa K_n}\right\}
  \le e^{-c'K_n}
  \label{eq:meta-denom-bound}
\end{equation}
for all sufficiently large $n$.
\end{corollary}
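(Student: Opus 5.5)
The corollary follows from (B1)--(B3) alone. On $\mathcal E_n$, I will bound the normalizer from below by restricting the integral defining $D_n(X)$ to the local set $\mathcal B_n(f_0)$. Since $L_n^{\mathrm{ABC}}\ge0$, this gives
\begin{equation}
  D_n(X)\ge\int_{\mathcal B_n(f_0)}L_n^{\mathrm{ABC}}(f;X)\,\Pi_n(\mathrm df)
  \ge \exp\{-C_LK_n\}\,\Pi_n\{\mathcal B_n(f_0)\}
  \ge \exp\{-(C_L+C_\Pi)K_n\}.
\end{equation}
The second inequality uses the pointwise lower bound in (B3), which holds for every $f\in\mathcal B_n(f_0)$ on $\mathcal E_n$. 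The third inequality uses the local prior-mass bound (B2).

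Now choose any $\kappa>C_L+C_\Pi$; taking $\kappa=C_L+C_\Pi$ works as well, since the inequality in \eqref{eq:meta-denom-bound} is strict. On $\mathcal E_n$ this gives $D_n(X)\ge e^{-(C_L+C_\Pi)K_n}\ge e^{-\kappa K_n}$, so
\begin{equation}
  \{D_n(X)<e^{-\kappa K_n}\}\subset\mathcal E_n^c .
\end{equation}
By (B1), $\mathrm P_{f_0}^{(n)}(\mathcal E_n^c)\le e^{-cK_n}$. For any fixed $c'\in(0,c)$, for instance $c'=c/2$, we have $e^{-cK_n}\le e^{-c'K_n}$. This yields \eqref{eq:meta-denom-bound}.

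The constants $C_\Pi$, $C_L$ and $c$, and the threshold for ``sufficiently large $n$'', are uniform over $f_0\in\mathcal F_n$ by Condition~\ref{ass:meta}. The bound therefore holds with the supremum over $f_0$.

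The only point requiring care is measurability. The event $\{D_n(X)<e^{-\kappa K_n}\}$ must be measurable, which needs $D_n$ to be a measurable function of $X$. This follows from joint measurability of $(\theta,x)\mapsto L_n^{\mathrm{ABC}}(\theta;x)$, given the kernel structure and the joint measurability of $\rho_n$, together with Fubini's theorem. Even if measurability were in doubt, the inclusion in $\mathcal E_n^c$ gives the bound for outer probability. Because the strict choice $c'<c$ leaves slack, no further obstacle arises.
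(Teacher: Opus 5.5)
Your proof is correct and follows the paper's argument. The paper likewise uses (B2)--(B3) to get $D_n(X)\ge e^{-(C_\Pi+C_L)K_n}$ on $\mathcal E_n$, chooses $\kappa>C_\Pi+C_L$ and $c'\in(0,c)$, and bounds the event $\{D_n(X)<e^{-\kappa K_n}\}$ by $\mathrm P_{f_0}^{(n)}(\mathcal E_n^c)\le e^{-cK_n}$ via (B1); your remarks that $\kappa=C_\Pi+C_L$ already suffices and about measurability are harmless additions.
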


\begin{proof}
See Appendix~\ref{app:meta-proofs}.
\end{proof}

\begin{theorem}[Monte Carlo transfer for posterior mass]
\label{thm:meta-mc}
Assume Condition~\ref{ass:meta}, let $\kappa$ be as in Corollary~\ref{cor:meta-denom}, and let $M_0$ be as in Theorem~\ref{thm:meta-ideal}. If
\begin{equation}
  N_n e^{-\kappa K_n}\to\infty,
  \label{eq:meta-N-contract}
\end{equation}
then, for every fixed $M\ge M_0$,
\begin{equation}
  \sup_{f_0\in\mathcal F_n}\mathrm E_{f_0}^{(n)}\mathrm E_{\mathrm{MC}}
  \widehat\Pi_{n,N}^{\mathrm{ABC}}
  \left\{f:d(f,f_0)>M\epsilon_n\mid X\right\}\to0.
  \label{eq:meta-mc-contract}
\end{equation}
\end{theorem}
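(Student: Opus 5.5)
The argument works conditionally on $X$. It splits on whether the normalizer is large and whether the number of acceptances concentrates around its mean. Let $B_M=\{f:d(f,f_0)>M\epsilon_n\}$. Given $X$, the pairs $(\theta_i,I_i)$ are i.i.d., with
\begin{equation}
  \mathrm P_{\mathrm{MC}}(I_i=1\mid X)=D_n(X),
  \qquad
  \mathrm P_{\mathrm{MC}}(I_i=1,\theta_i\in B_M\mid X)=D_n(X)\,\Pi_n^{\mathrm{ABC}}(B_M\mid X).
\end{equation}
Hence $A_{n,N}\sim\mathrm{Bin}(N_n,D_n(X))$, and $U_{n,N}:=\sum_i I_i\mathbf 1\{\theta_i\in B_M\}\sim\mathrm{Bin}(N_n,D_n(X)\Pi_n^{\mathrm{ABC}}(B_M\mid X))$. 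Since the empirical posterior mass is at most one, we have
\begin{equation}
  \widehat\Pi_{n,N}^{\mathrm{ABC}}(B_M\mid X)\le \mathbf 1\{A_{n,N}<\tfrac12 N_nD_n(X)\}+\frac{2U_{n,N}}{N_nD_n(X)}.
\end{equation}
The fallback case $A_{n,N}=0$ falls into the first indicator as long as $D_n(X)>0$, and the fallback $\delta_{\theta^\dagger}$ contributes at most one.

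\textbf{Step 1.} Take the $\mathrm E_{\mathrm{MC}}$ expectation of both terms. The second term has expectation exactly $2\Pi_n^{\mathrm{ABC}}(B_M\mid X)$. For the first, apply a Chernoff (or Chebyshev) bound:
\begin{equation}
  \mathrm P_{\mathrm{MC}}\{A_{n,N}<\tfrac12 N_nD_n\}\le \exp\{-N_nD_n/8\}
  \quad\text{or}\quad
  \le \frac{4}{N_nD_n}.
\end{equation}

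\textbf{Step 2.} Define the good event $\mathcal H_n=\mathcal E_n\cap\{D_n(X)\ge e^{-\kappa K_n}\}$. On $\mathcal H_n$ and for $M\ge M_0$,
\begin{equation}
  \mathrm E_{\mathrm{MC}}\widehat\Pi_{n,N}^{\mathrm{ABC}}(B_M\mid X)\le \exp\{-N_ne^{-\kappa K_n}/8\}+4e^{-\gamma_MK_n},
\end{equation}
using Theorem~\ref{thm:meta-ideal}. This bound is deterministic and uniform in $f_0$. It tends to zero by \eqref{eq:meta-N-contract} and because $K_n\to\infty$.

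\textbf{Step 3.} Off $\mathcal H_n$, bound the integrand by one. Then
\begin{equation}
  \mathrm P_{f_0}^{(n)}(\mathcal H_n^c)\le e^{-cK_n}+e^{-c'K_n},
\end{equation}
by (B1) and Corollary~\ref{cor:meta-denom}, uniformly over $\mathcal F_n$. Summing the contributions from Steps 2 and 3 and taking the supremum over $f_0$ gives \eqref{eq:meta-mc-contract}.

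The main obstacle is the measurability and conditional-independence bookkeeping behind the binomial representation. One has to verify that $\mathrm P_{\mathrm{MC}}(I_i=1,\theta_i\in B\mid X)=\int_B L_n^{\mathrm{ABC}}(\theta;X)\Pi_n(\mathrm d\theta)$, which is Fubini applied to the kernel $\mathrm Q_\theta^{(n)}$ with $X$ fixed. One also needs $B_M$ to be measurable, which follows from continuity of $d(\cdot,f_0)$ on the separable metric space $\Theta$. The rest is routine.
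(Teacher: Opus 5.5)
Your proof is correct, but it handles the random denominator differently from the paper.

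\textbf{The paper's route.} It conditions on $A_{n,N}=m\ge1$ and uses the fact that the accepted draws are then i.i.d.\ from $\Pi_n^{\mathrm{ABC}}(\cdot\mid X)$. This gives, for every $X$, the bound $\mathrm E_{\mathrm{MC}}\{\widehat\Pi_{n,N}^{\mathrm{ABC}}(B\mid X)\mid X\}\le\Pi_n^{\mathrm{ABC}}(B\mid X)+(1-D_n(X))^{N_n}$. Corollary~\ref{cor:meta-denom} is then used only to show $\mathrm E_{f_0}^{(n)}(1-D_n(X))^{N_n}\le e^{-c'K_n}+\exp\{-N_ne^{-\kappa K_n}\}$.

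\textbf{Your route.} You linearize the ratio $U_{n,N}/A_{n,N}$ by replacing $A_{n,N}$ with half its conditional mean. The cost is a binomial lower-tail event and a factor $2$ on the ideal posterior mass. The benefit is that you need only the first moment of $U_{n,N}$ and a scalar Chernoff or Chebyshev bound, so you avoid the conditional-i.i.d.\ argument entirely.

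\textbf{Comparison.} The paper's bound is sharper: there is no factor $2$, and the exact no-acceptance probability replaces $\mathrm P_{\mathrm{MC}}\{A_{n,N}<N_nD_n/2\}$. More importantly, its conditional-i.i.d.\ representation is reused, via Jensen's inequality given $A_{n,N}=m$, in Corollary~\ref{cor:meta-mc-mean} and the posterior-mean results built on it. Your ratio bound does not immediately give those results. On $\{1\le A_{n,N}<N_nD_n/2\}$ the empirical loss $\sum_i I_i d^2(\theta_i,f_0)/A_{n,N}$ is not bounded by a constant, so the indicator term cannot simply be bounded by one as it is here.

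A minor point: for the $\kappa>C_\Pi+C_L$ constructed in the proof of Corollary~\ref{cor:meta-denom}, the normalizer bound on $\mathcal E_n$ already gives $D_n(X)\ge e^{-\kappa K_n}$. So your event $\mathcal H_n$ coincides with $\mathcal E_n$, and intersecting the two is harmless.
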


\begin{proof}
See Appendix~\ref{app:meta-proofs}.
\end{proof}

\begin{corollary}[Monte Carlo transfer for posterior means]
\label{cor:meta-mc-mean}
Assume the conditions of Theorem~\ref{thm:meta-mc}. Suppose $\Theta$ is contained in a separable Hilbert space, $d$ is the ambient norm metric (also used for means outside $\Theta$), and
\begin{equation}
  \sup_{n\ge1}\sup_{f_0\in\mathcal F_n}d(\theta^\dagger,f_0)<\infty.
  \label{eq:meta-fallback-bounded}
\end{equation}
Define the empirical ABC posterior mean by
\begin{equation}
  \widehat f_{n,N}^{\mathrm{ABC}}
  =\int_\Theta f\,\widehat\Pi_{n,N}^{\mathrm{ABC}}(\mathrm df\mid X).
  \label{eq:meta-empirical-mean}
\end{equation}
Suppose also that
\begin{equation}
  \sup_{f_0\in\mathcal F_n}\mathrm E_{f_0}^{(n)}
  \int_\Theta d^2(f,f_0)\Pi_n^{\mathrm{ABC}}(\mathrm df\mid X)
  \le C\epsilon_n^2,
  \label{eq:meta-ideal-second}
\end{equation}
that $e^{-c'K_n}=O(\epsilon_n^2)$, and that
\begin{equation}
  N_n e^{-\kappa K_n}\ge C_N\log(1/\epsilon_n)
  \label{eq:meta-N-risk}
\end{equation}
for $C_N$ sufficiently large. Then the empirical ABC posterior mean is well defined almost surely and satisfies
\begin{equation}
  \sup_{f_0\in\mathcal F_n}\mathrm E_{f_0}^{(n)}\mathrm E_{\mathrm{MC}}
  d^2(\widehat f_{n,N}^{\mathrm{ABC}},f_0)
  \le C'\epsilon_n^2.
  \label{eq:meta-mc-risk}
\end{equation}
\end{corollary}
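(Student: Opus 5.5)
We bound $d^2(\widehat f_{n,N}^{\mathrm{ABC}},f_0)$ by splitting on good and bad events, so that the empirical posterior mean is compared with the ideal posterior second moment through a conditional Monte Carlo calculation. First, well-definedness. On $\{A_{n,N}\ge1\}$ the empirical mean is a finite average of accepted draws $\theta_i\in\Theta$. On $\{A_{n,N}=0\}$ it equals $\theta^\dagger$. So it is a measurable Hilbert-valued random element almost surely.

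By Jensen's inequality in the Hilbert space (convexity of $h\mapsto\lVert h-f_0\rVert^2$),
\begin{equation}
  d^2(\widehat f_{n,N}^{\mathrm{ABC}},f_0)\le \int_\Theta d^2(f,f_0)\,\widehat\Pi_{n,N}^{\mathrm{ABC}}(\mathrm df\mid X)
  =\frac{\sum_i I_i d^2(\theta_i,f_0)}{A_{n,N}}
\end{equation}
on $\{A_{n,N}\ge1\}$, and $d^2(\widehat f_{n,N}^{\mathrm{ABC}},f_0)=d^2(\theta^\dagger,f_0)\le C_\dagger$ on $\{A_{n,N}=0\}$ by \eqref{eq:meta-fallback-bounded}.

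Next, fix $X$ with $D_n(X)>0$ and condition on $A_{n,N}=a\ge1$. The accepted pairs are then i.i.d. draws from $\Pi_n^{\mathrm{ABC}}(\cdot\mid X)$, because each $(\theta_i,I_i)$ is i.i.d. and conditioning on acceptance yields exactly the ideal ABC posterior. Hence
\begin{equation}
  \mathrm E_{\mathrm{MC}}\Bigl[\frac{\sum_i I_i d^2(\theta_i,f_0)}{A_{n,N}}\mathbf 1\{A_{n,N}\ge1\}\Bigm| X\Bigr]
  =\mathrm P_{\mathrm{MC}}(A_{n,N}\ge1\mid X)\int d^2(f,f_0)\Pi_n^{\mathrm{ABC}}(\mathrm df\mid X).
\end{equation}
This is at most the ideal second moment. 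Taking the outer expectation and using \eqref{eq:meta-ideal-second} gives a contribution of at most $C\epsilon_n^2$. This exchangeability step needs no lower bound on $D_n$; it is the cleanest part.

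The remaining term is $C_\dagger\,\mathrm E_{f_0}^{(n)}\mathrm P_{\mathrm{MC}}(A_{n,N}=0\mid X)$. Given $X$, $A_{n,N}\sim\mathrm{Bin}(N_n,D_n(X))$, so
\begin{equation}
  \mathrm P_{\mathrm{MC}}(A_{n,N}=0\mid X)=(1-D_n(X))^{N_n}\le\exp\{-N_nD_n(X)\}.
\end{equation}
Split on the event $\{D_n(X)\ge e^{-\kappa K_n}\}$.
\begin{itemize}
  \item On this event, \eqref{eq:meta-N-risk} gives the bound $\exp\{-C_N\log(1/\epsilon_n)\}=\epsilon_n^{C_N}\le\epsilon_n^2$ once $C_N\ge2$ and $\epsilon_n<1$.
  \item On the complement, bound the probability by one. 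Corollary~\ref{cor:meta-denom} gives mass at most $e^{-c'K_n}=O(\epsilon_n^2)$, uniformly in $f_0$.
\end{itemize}
Summing the three contributions, each uniform over $f_0\in\mathcal F_n$, yields \eqref{eq:meta-mc-risk} with $C'=C+C_\dagger(1+O(1))$.

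The main obstacle is making the conditional exchangeability identity rigorous: the accepted draws conditionally on $A_{n,N}$ must be i.i.d. $\Pi_n^{\mathrm{ABC}}(\cdot\mid X)$, with the required measurability. I would handle this by the standard computation $\mathrm E[I_i h(\theta_i)\mid X]=D_n(X)\int h\,\mathrm d\Pi_n^{\mathrm{ABC}}$ together with symmetry of the ratio $\sum_i I_ih(\theta_i)/A_{n,N}$ across indices. This gives $\mathrm E[\,\cdot\,\mathbf 1\{A\ge1\}]=\sum_i\mathrm E[I_ih(\theta_i)/A]$. One then conditions on $I_i=1$ and on the other indicators, whose count is independent of $\theta_i$.
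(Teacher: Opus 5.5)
Your proposal is correct and follows essentially the same route as the paper. It applies Jensen's inequality on $\{A_{n,N}\ge1\}$, uses the conditional i.i.d.\ law $\Pi_n^{\mathrm{ABC}}(\cdot\mid X)$ of the accepted draws to reduce to the ideal second moment \eqref{eq:meta-ideal-second}, and multiplies the fallback bound \eqref{eq:meta-fallback-bounded} by $\mathrm E_{f_0}^{(n)}(1-D_n(X))^{N_n}\le e^{-c'K_n}+\exp\{-N_ne^{-\kappa K_n}\}$ from Corollary~\ref{cor:meta-denom}; your computation via $\mathrm E[I_ih(\theta_i)/(1+\sum_{j\ne i}I_j)\mid X]$ simply makes rigorous the conditional i.i.d.\ step that the paper asserts directly.
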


\begin{proof}
See Appendix~\ref{app:meta-proofs}.
\end{proof}

Conditional on a positive number of acceptances, the accepted draws are iid from the ideal ABC posterior, and the ideal posterior second moment controls the empirical mean. On the no-acceptance event, the loss is controlled by a uniform bound on $d(\theta^\dagger,f_0)$ over the target class.

\section{Orthogonal-series regression with \texorpdfstring{$g$-and-$k$}{g-and-k} errors}
\label{sec:gk-regression}

\subsection{Fixed-design regression model and coefficient summaries}

Fix $\alpha>0$ and $R>0$, and let
\begin{equation}
  \Theta_\alpha(R)=\left\{\theta\in\ell^2:\sum_{j=1}^\infty j^{2\alpha}\theta_j^2\le R^2\right\}.
  \label{eq:reg-ellipsoid}
\end{equation}
The loss is the squared $\ell^2$ norm. Set
\begin{equation}
  K_n=\left\lceil n^{1/(2\alpha+1)}\right\rceil,
  \qquad
  \epsilon_n=K_n^{-\alpha}+\sqrt{K_n/n},
  \qquad
  \rho_{n,\alpha}=n^{-\alpha/(2\alpha+1)}.
  \label{eq:reg-rate}
\end{equation}
Then $\epsilon_n\asymp\rho_{n,\alpha}$ and $K_n\asymp n\rho_{n,\alpha}^2$.

For $i,j\in\{1,\ldots,n\}$, define the normalized discrete-cosine design of \citet{AhmedNatarajanRao1974} by
\begin{equation}
  \phi_{1,n}(i)=1,
  \qquad
  \phi_{j,n}(i)=\sqrt2\cos\left\{\frac{\pi(j-1)(i-1/2)}{n}\right\},\quad j\ge2.
  \label{eq:dct-design}
\end{equation}
Then
\begin{equation}
  \frac1n\sum_{i=1}^n\phi_{j,n}(i)\phi_{k,n}(i)=\mathbf 1\{j=k\},
  \qquad 1\le j,k\le n,
  \label{eq:dct-orthogonality}
\end{equation}
and $|\phi_{j,n}(i)|\le\sqrt2$.

Fix $g>0$ and $c\in(0,1/2)$. Define the $k=0$ $g$-and-$k$ quantile transformation and its centered version by
\begin{equation}
  T_{g,c}(z)=z\left\{1+c\tanh(gz/2)\right\},
  \qquad
  \mu_{g,c}=\mathrm E\{T_{g,c}(G)\},
  \qquad
  h_{g,c}(z)=T_{g,c}(z)-\mu_{g,c},
  \label{eq:implicit-error-map}
\end{equation}
where $G\sim\mathrm N(0,1)$, and put
\begin{equation}
  \varepsilon=h_{g,c}(G).
  \label{eq:implicit-error}
\end{equation}
The model treats $\mu_{g,c}$ as a known constant. It may be precomputed to arbitrary numerical accuracy by one-dimensional quadrature; thereafter, each error draw requires only a Gaussian draw and evaluation of $T_{g,c}$. Before centering, $T_{g,c}$ is the $A=0$, $B=1$, $k=0$ subfamily of the usual $g$-and-$k$ quantile family; see \citet{HaynesMacGillivrayMengersen1997} and \citet{RaynerMacGillivray2002}.

The regression model is
\begin{equation}
  X_i=\sum_{j=1}^n\theta_{0j}\phi_{j,n}(i)+\varepsilon_i,
  \qquad i=1,\ldots,n,
  \label{eq:implicit-regression-model}
\end{equation}
where the errors are iid copies of $\varepsilon$ and $\theta_0\in\Theta_\alpha(R)$. The simulator draws independent standard Gaussian variables and applies $h_{g,c}$. By contrast, likelihood evaluation at a residual $x$ requires the inverse of $h_{g,c}$ through
\begin{equation}
  q_{g,c}(x)=\frac{\varphi\{h_{g,c}^{-1}(x)\}}{h_{g,c}'\{h_{g,c}^{-1}(x)\}},
  \label{eq:implicit-error-density}
\end{equation}
where $\varphi$ is the standard Gaussian density and $h_{g,c}^{-1}(x)$ is obtained by numerically solving $h_{g,c}(z)=x$.

\begin{proposition}[Regularity of the centered $g$-and-$k$ error]
\label{prop:implicit-error}
The law in \eqref{eq:implicit-error} is centered and sub-Gaussian. Its density is strictly positive and twice continuously differentiable. There are $t_0,C_q>0$ such that
\begin{equation}
  \mathrm{KL}\{q_{g,c}(\cdot-u),q_{g,c}(\cdot-v)\}\le C_q(u-v)^2
  \label{eq:implicit-shift-kl}
\end{equation}
whenever $|u-v|\le t_0$.
\end{proposition}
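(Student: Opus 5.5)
Start from the structure of the transformation. Write $T(z)=T_{g,c}(z)=z\{1+c\tanh(gz/2)\}$. Since $\tanh$ is odd and $z\tanh(gz/2)\ge0$, the derivative is
\begin{equation}
  T'(z)=1+c\tanh(gz/2)+\tfrac{cg}{2}\,z\,\mathrm{sech}^2(gz/2).
\end{equation}
We have $1+c\tanh(gz/2)\ge 1-c>1/2$. The last term is nonnegative, because $z$ and $\tanh$ share a sign only in the combined product; more precisely, $z\,\mathrm{sech}^2(gz/2)$ has the sign of $z$. So this needs care.

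Write $T'(z)=1+c\,\psi(gz/2)$ with $\psi(u)=\tanh u+u\,\mathrm{sech}^2u$. Then $\psi(u)$ is odd and $|\psi(u)|\le \sup_{u\ge0}(\tanh u+u\,\mathrm{sech}^2u)$. I expect this supremum to be at most about $1.2$, so $T'\ge 1-1.2c$. This is positive only for small $c$, which is the main worry.

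Check: $\psi'(u)=2\,\mathrm{sech}^2u\,(1-u\tanh u)$, so $\psi$ is maximized where $u\tanh u=1$, at $u\approx1.2$, giving $\psi\approx 0.834+1.2\cdot0.306\approx1.20$. Then $c<1/2$ gives $T'\ge 1-0.6=0.4>0$. So the first key step is the uniform bound
\begin{equation}
  0<1-c\psi^\ast\le h'(z)\le 1+c\psi^\ast,
\end{equation}
with $\psi^\ast=\sup_u\psi(u)<2$, where $h=h_{g,c}$. Hence $h$ is a $C^\infty$ bi-Lipschitz diffeomorphism of $\mathbb R$.

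Centering and sub-Gaussianity are then immediate. $\mathrm E\varepsilon=0$ holds by definition of $\mu_{g,c}$, which is finite because $|T(z)|\le(1+c)|z|$. Since $h$ is Lipschitz with constant $1+c\psi^\ast$, Gaussian concentration of Lipschitz functions gives $\varepsilon-\mathrm E\varepsilon$ sub-Gaussian.

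For the density, use \eqref{eq:implicit-error-density}. The inverse $h^{-1}$ is $C^\infty$ by the inverse function theorem, and $h'\circ h^{-1}>0$, so $q=q_{g,c}$ is strictly positive and $C^2$, in fact $C^\infty$.

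For the KL bound, the plan is to show the Fisher information $I=\int (q')^2/q$ is finite and that the local KL expansion holds uniformly in the location. By translation invariance, $\mathrm{KL}\{q(\cdot-u),q(\cdot-v)\}=\mathrm{KL}\{q,q(\cdot-t)\}$ with $t=v-u$. Define $\ell=\log q$, which is $C^2$. Taylor expansion in $t$, together with $\int q'=0$, gives
\begin{equation}
  \mathrm{KL}\{q,q(\cdot-t)\}=\int q(x)\{\ell(x)-\ell(x-t)\}\,dx\le \tfrac{t^2}{2}\int q(x)\sup_{|s|\le t_0}|\ell''(x-s)|\,dx .
\end{equation}
Change variables $x=h(z)$ and compute the score. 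We have $\ell(h(z))=-z^2/2-\log h'(z)+\text{const}$, so
\begin{equation}
  \ell'(x)=\frac{-z-h''(z)/h'(z)}{h'(z)},
\end{equation}
and $\ell''$ is a similar rational expression in $z,h',h'',h'''$.

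Every derivative $h^{(k)}$ for $k\ge2$ is bounded, since $\mathrm{sech}^2$ decays exponentially, and $h'$ is bounded below. This gives $|\ell''(x)|\lesssim 1+|z|^2$. A shift of $x$ by $|s|\le t_0$ moves $z$ by at most $t_0/(1-c\psi^\ast)$, by the bi-Lipschitz property. Therefore
\begin{equation}
  \sup_{|s|\le t_0}|\ell''(x-s)|\lesssim 1+|h^{-1}(x)|^2,
\end{equation}
which is integrable against $q$ because the right side has Gaussian moments. This yields $C_q$.

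The step I expect to be the main obstacle is the uniform lower bound on $h'$ for all $c\in(0,1/2)$. Everything else follows from it. The computation of $\psi^\ast$ must be made rigorous; I would bound $\tanh u\le1$ and $u\,\mathrm{sech}^2u\le 4u e^{-2u}\le 2/e<1$. This gives $\psi^\ast<2$, so $1-c\psi^\ast>0$ when $c<1/2$.
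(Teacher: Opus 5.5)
Your proposal is correct and follows essentially the same route as the paper: a uniform two-sided bound $h_{g,c}'=1+c\,\psi(gz/2)\in[1-c\psi^\ast,1+c\psi^\ast]$ (you bound $\psi^\ast\le 1+2/e$, the paper uses $|\tanh t|+|t|\operatorname{sech}^2 t\le 3/2$). The remaining steps also match the paper's: Lipschitz Gaussian concentration for sub-Gaussianity, the change-of-variables density, and a second-order Taylor bound on $\ell=\log q_{g,c}$ whose first-order term vanishes by $\int q_{g,c}'=0$ and whose remainder is controlled by a polynomial bound in $z=h_{g,c}^{-1}(x)$ (you use $1+|z|^2$, the paper $1+|z|$) together with the bi-Lipschitz shift estimate.

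One point you leave implicit: $\int q_{g,c}'=0$ needs $q_{g,c}'=q_{g,c}\ell'$ to be integrable and $q_{g,c}\to0$ at $\pm\infty$. Both follow from your formula for $\ell'$, which gives $|\ell'|\lesssim 1+|z|$, and Gaussian moments; the paper states this explicitly.
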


\begin{proof}
See Appendix~\ref{app:gk-proofs}.
\end{proof}

Define
\begin{equation}
  S_{n,K_n}(X)_j=\frac1n\sum_{i=1}^n\phi_{j,n}(i)X_i,
  \qquad j=1,\ldots,K_n.
  \label{eq:implicit-reg-summary}
\end{equation}
Under $\mathrm P_\theta^{(n)}$, the summary satisfies $\mathrm E_\theta^{(n)}S_{n,K_n}(X)=\bm\theta_{K_n}$ by \eqref{eq:dct-orthogonality}.

\subsection{Fixed Gaussian series prior}

Let $0<\lambda_-<\lambda_+<\infty$, and let $\nu$ be a probability measure on $(0,\infty)$ satisfying
\begin{equation}
  \nu([\lambda_-,\lambda_+])=1.
  \label{eq:reg-scale-hyperprior}
\end{equation}
The prior $\Pi$ is the marginal law of the coefficient sequence $\theta=(\theta_j)_{j\ge1}$ under the hierarchy
\begin{equation}
  \Lambda\sim\nu,
  \qquad
  \theta_j=\Lambda j^{-\alpha-1/2}G_j,
  \qquad j\ge1,
  \label{eq:reg-fixed-prior}
\end{equation}
where the $G_j$ are independent standard Gaussian variables, independent of $\Lambda$. Since $\sum_{j=1}^{\infty} j^{-2\alpha-1}<\infty$, the series belongs to $\ell^2$ almost surely. We take $\Pi_n=\Pi$ for every $n$. The prior does not depend on $R$; its smoothness exponent is matched to $\alpha$. To generate the regression dataset in \eqref{eq:implicit-regression-model}, only $\Lambda$ and the first $n$ coefficients need be generated; the same coordinates suffice for the acceptance decision.

Synthetic data $Z$ are generated from \eqref{eq:implicit-regression-model} at a draw $\theta\sim\Pi$, and
\begin{equation}
  L_{n,\mathrm{reg}}^{\mathrm{ABC}}(\theta;X)
  =\mathrm P_\theta^{(n)}\left\{\left\lVert S_{n,K_n}(Z)-S_{n,K_n}(X)\right\rVert_2\le A\epsilon_n\right\}.
  \label{eq:implicit-reg-abc-likelihood}
\end{equation}
Let $\Pi_{n,\mathrm{reg}}^{\mathrm{ABC}}(\cdot\mid X)$ be the corresponding ideal ABC posterior. Let $\widehat\Pi_{n,N,\mathrm{reg}}^{\mathrm{ABC}}(\cdot\mid X)$ denote the corresponding empirical rejection-ABC posterior. Both posteriors use the fallback point zero under the conventions of Section~\ref{sec:notation}.

\subsection{Ideal ABC posterior upper bound}

\begin{theorem}[$g$-and-$k$ error regression ABC contraction]
\label{thm:implicit-reg-contract}
There is $A_0<\infty$ such that, for every $A>A_0$, there is $M_0<\infty$ for which, for every fixed $M\ge M_0$,
\begin{equation}
  \sup_{\theta_0\in\Theta_\alpha(R)}\mathrm E_{\theta_0}^{(n)}
  \Pi_{n,\mathrm{reg}}^{\mathrm{ABC}}
  \left\{\theta:\lVert\theta-\theta_0\rVert_{\ell^2}>M\rho_{n,\alpha}\mid X\right\}\to0.
  \label{eq:implicit-reg-contract}
\end{equation}
\end{theorem}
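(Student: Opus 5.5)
We will verify Condition~\ref{ass:meta} with $\Theta=\ell^2$, $d=\lVert\cdot\rVert_{\ell^2}$, $\mathcal F_n=\Theta_\alpha(R)$, $\Pi_n=\Pi$, and $K_n,\epsilon_n$ as in \eqref{eq:reg-rate}, and then invoke Theorem~\ref{thm:meta-ideal}. Since $\epsilon_n\asymp\rho_{n,\alpha}$, replacing $M\epsilon_n$ by $M\rho_{n,\alpha}$ only changes $M_0$.

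The basic decomposition is $S_{n,K_n}(X)=\bm\theta_{0,K_n}+W_X$ with $W_X=n^{-1}\sum_i\phi_{\cdot,n}(i)\varepsilon_i$. Here $\bm\theta_{0,K_n}$ is the exact center when $\theta_0$ is identified with its first $n$ coordinates, since the model only uses $j\le n$. Because $|\phi_{j,n}|\le\sqrt2$ and $\varepsilon$ is sub-Gaussian (Proposition~\ref{prop:implicit-error}), each $u^\top W_X$ with $u\in\mathbb S^{K_n-1}$ is sub-Gaussian with variance proxy $\lesssim 1/n$. A $1/2$-net argument on $\mathbb S^{K_n-1}$ therefore gives $\mathrm P(\lVert W_X\rVert_2>C_X\sqrt{K_n/n})\le e^{-cK_n}$ for $C_X$ large. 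Define $\mathcal E_n=\{\lVert W_X\rVert_2\le C_X\sqrt{K_n/n}\}$, which gives (B1). The synthetic noise $W_Z$ obeys the same deviation bound $\mathrm P(\lVert W_Z\rVert_2>t)\le \exp\{-c' n t^2\}$ for $t\ge C\sqrt{K_n/n}$.

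For (B2)--(B3), take $\mathcal B_n(\theta_0)=\{\theta:\lVert\bm\theta_{K_n}-\bm\theta_{0,K_n}\rVert_2\le\epsilon_n,\ \sum_{j>K_n}\theta_j^2\le\epsilon_n^2\}$. Then $d(\theta,\theta_0)\le \epsilon_n+\epsilon_n+K_n^{-\alpha}R\lesssim\epsilon_n$, using the ellipsoid bound on the tail of $\theta_0$. The tail condition has prior probability bounded below by a constant, since $\mathrm E\sum_{j>K_n}\Lambda^2j^{-2\alpha-1}G_j^2\lesssim K_n^{-2\alpha}\le\epsilon_n^2$ and Markov's inequality applies after inflating the radius by a constant. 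Conditionally on $\Lambda\in[\lambda_-,\lambda_+]$, the first $K_n$ coordinates are independent Gaussians with standard deviations in $[\lambda_-K_n^{-\alpha-1/2},\lambda_+]$. The small-ball probability of a ball of radius $\epsilon_n$ around $\bm\theta_{0,K_n}$ is then bounded below coordinatewise: use cubes of side $\epsilon_n/\sqrt{K_n}$, where each factor is at least (side/sd)$\times e^{-\theta_{0j}^2/(2\mathrm{sd}^2)-O(1)}$. The log-factors are $\log(\sqrt{K_n}K_n^{\alpha+1/2}/\epsilon_n)\lesssim\log n$, and the shift cost is $\sum_j\theta_{0j}^2j^{2\alpha+1}/\lambda_-^2\le R^2K_n/\lambda_-^2$. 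This yields $\Pi(\mathcal B_n)\ge e^{-CK_n\log n}$, which is \emph{off by a log}.

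This prior-mass bound is the main obstacle. I would fix it using the cube side length: for coordinates with $\mathrm{sd}_j\gtrsim\epsilon_n/\sqrt{K_n}$ the factor is only polynomial, and summing $\log(K_n^{\alpha+1}/\epsilon_n)$ over $j\le K_n$ still gives $K_n\log n$. The better route is to use Gaussian small-ball estimates for the whole series, via an RKHS shift (Cameron--Martin) plus a centered small-ball bound. For a centered Gaussian with $\mathrm{sd}_j\asymp j^{-\alpha-1/2}$, one has $-\log\mathrm P(\lVert\theta\rVert\le\epsilon)\asymp\epsilon^{-1/\alpha}\asymp K_n$, with no log. For the shift, approximate $\theta_0$ by its truncation $\bm\theta_{0,K_n}$, which has RKHS norm squared $\le R^2K_n/\lambda^2$. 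Anderson's inequality and the Cameron--Martin bound then give $\Pi(\lVert\theta-\theta_0\rVert\le C\epsilon_n)\ge e^{-CK_n}$ uniformly in $\theta_0$ and $\Lambda$, which yields (B2). For (B3), note that for $\theta\in\mathcal B_n$ the synthetic center is within $\lesssim\epsilon_n$ of $\bm\theta_{0,K_n}$. On $\mathcal E_n$, acceptance holds whenever $\lVert W_Z\rVert_2\le A\epsilon_n-C_1\epsilon_n-C_X\sqrt{K_n/n}$. For $A>A_0$ this threshold is at least $C\sqrt{K_n/n}$, so the event has probability at least $1/2\ge e^{-C_LK_n}$; this determines $A_0$.

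For (B4)--(B5), define the sieve $\mathcal G_n=\{\theta:\sum_{j>K_n}\theta_j^2\le H^2\epsilon_n^2\}$. Its complement has probability at most $e^{-c H^2 K_n}$ by Gaussian concentration (Borell--TIS applied to the tail norm, which has mean $\lesssim K_n^{-\alpha}$ and weak variance $\lesssim K_n^{-2\alpha-1}$, so the exponent is even $\gtrsim H^2K_n^{2\alpha+1}\epsilon_n^2\ge H^2K_n$). Choosing $H$ large gives (B5). For $\theta\in\mathcal G_n$ with $d(\theta,\theta_0)>M\epsilon_n$, we get $\lVert\bm\theta_{K_n}-\bm\theta_{0,K_n}\rVert_2\ge (M-H-R)\epsilon_n$. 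Here the part of $\theta$ beyond coordinate $n$ is also in the tail, and the ellipsoid controls $\theta_0$'s tail. By \eqref{eq:meta-separation-explanation}, acceptance then forces $\lVert W_Z\rVert_2\ge(M-H-R-A-C_X)\epsilon_n$. The sub-Gaussian bound above gives probability at most $\exp\{-c'(M-C_S)^2n\epsilon_n^2\}\le\exp\{-C_D(M-C_S)^2K_n\}$, which is (B4). Theorem~\ref{thm:meta-ideal} then yields the claim.
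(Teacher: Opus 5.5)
Your argument is correct and has the same architecture as the paper's proof: the event $\mathcal E_n$ from directional sub-Gaussian concentration of the coefficient summaries, a head/tail local set, the tail sieve $\{\lVert\bm\theta_{>K_n}\rVert_{\ell^2}\le H\epsilon_n\}$, and the separation step \eqref{eq:meta-separation-explanation} for (B4), all fed into Theorem~\ref{thm:meta-ideal}. You differ from the paper in two auxiliary lemmas.

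\textbf{Sieve.} You use Borell--TIS where the paper uses the Laurent--Massart weighted chi-square bound (Lemma~\ref{lem:weighted-chi-square}). Both give $\Pi(\mathcal G_n^c)\le e^{-cH^2K_n}$, so nothing is gained or lost.

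\textbf{Local mass.} You replace the paper's Gaussian-density-times-ball-volume bound (Lemma~\ref{lem:reg-prior-mass}) by the shift inequality $\mathrm P(\lVert\theta-h\rVert_{\ell^2}\le\epsilon)\ge e^{-\lVert h\rVert_{\mathbb H}^2/2}\,\mathrm P(\lVert\theta\rVert_{\ell^2}\le\epsilon)$, with $h=(\bm\theta_{0,K_n},0,0,\ldots)$ and $\lVert h\rVert_{\mathbb H}^2\le R^2K_n/\lambda_-^2$, together with the centered small-ball exponent $\epsilon^{-1/\alpha}$. This is valid: the ball $\{\lVert\theta-h\rVert_{\ell^2}\le\epsilon_n\}$ lies inside your head/tail set, and uniformity in $\Lambda$ follows by scaling. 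It is also the version that extends to other Gaussian priors through the concentration function. However, it imports an external small-ball theorem. Moreover, the lower bound comes from the Cameron--Martin formula plus symmetry, not from Anderson's inequality, which gives the reverse inequality.

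\textbf{The detour was unnecessary.} The missing logarithm is an artifact of bounding each $\log(\mathrm{sd}_j/s)$ by its worst case. Take $s=\epsilon_n/\sqrt{K_n}\ge K_n^{-\alpha-1/2}$ and $\mathrm{sd}_j\le\lambda_+j^{-\alpha-1/2}$. Summing exactly and using Stirling gives $\sum_{j\le K_n}\log(\mathrm{sd}_j/s)\le(\alpha+\tfrac12)(K_n\log K_n-\log K_n!)+O(K_n)=O(K_n)$. So your coordinatewise cube argument already yields $e^{-CK_n}$. This is precisely the cancellation of the $K\log K$ terms in the paper's proof, which keeps the argument self-contained.

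\textbf{A small correction.} The $1/n$ variance proxy for $u^\top W_X$ comes from the orthogonality \eqref{eq:dct-orthogonality}, namely $\sum_{i=1}^n\{\sum_{j\le K_n}u_j\phi_{j,n}(i)\}^2=n$ for unit $u$. It does not follow from $|\phi_{j,n}|\le\sqrt2$, which alone only gives a proxy of order $K_n/n$.
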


\begin{proof}
See Appendix~\ref{app:gk-proofs}.
\end{proof}

Let $r_n=\sqrt{K_n}$ and let $\mathcal C_n:\ell^2\to\ell^2$ be the metric projection onto the closed ball of radius $r_n$. Define the clipped posterior mean
\begin{equation}
  \widehat\theta_{n,\mathrm{reg}}^{\mathrm{clip}}
  =\int_{\ell^2}\mathcal C_n(\theta)\Pi_{n,\mathrm{reg}}^{\mathrm{ABC}}(\mathrm d\theta\mid X).
  \label{eq:implicit-clipped-mean}
\end{equation}
Radial clipping depends on the full norm $\lVert\theta\rVert_{\ell^2}$. The clipped mean and its empirical counterpart below are idealized $\ell^2$-valued estimators: finite-coordinate simulation suffices for the data and acceptance decision, but does not determine the full norm exactly.

\begin{corollary}[$g$-and-$k$ error regression clipped posterior mean risk]
\label{cor:implicit-reg-risk}
Under the conditions of Theorem~\ref{thm:implicit-reg-contract},
\begin{equation}
  \sup_{\theta_0\in\Theta_\alpha(R)}\mathrm E_{\theta_0}^{(n)}
  \left\lVert\widehat\theta_{n,\mathrm{reg}}^{\mathrm{clip}}-\theta_0\right\rVert_{\ell^2}^2
  \le Cn^{-2\alpha/(2\alpha+1)}.
  \label{eq:implicit-reg-risk}
\end{equation}
\end{corollary}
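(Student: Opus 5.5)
We bound the risk of the clipped mean by splitting the posterior into a contraction region and its complement, and using the clipping radius $r_n=\sqrt{K_n}$ to cap the loss off that region. Since $\mathcal C_n$ is the metric projection onto a closed convex set containing $\theta_0$ (for large $n$, $\lVert\theta_0\rVert_{\ell^2}\le R\le r_n$), it is nonexpansive and satisfies $\lVert\mathcal C_n(\theta)-\theta_0\rVert_{\ell^2}\le\lVert\theta-\theta_0\rVert_{\ell^2}$. It is also bounded: $\lVert\mathcal C_n(\theta)-\theta_0\rVert_{\ell^2}\le r_n+R$. By Jensen's inequality for the Bochner integral (convexity of the squared norm),
\begin{equation}
  \left\lVert\widehat\theta_{n,\mathrm{reg}}^{\mathrm{clip}}-\theta_0\right\rVert_{\ell^2}^2
  \le\int\lVert\mathcal C_n(\theta)-\theta_0\rVert_{\ell^2}^2\,\Pi_{n,\mathrm{reg}}^{\mathrm{ABC}}(\mathrm d\theta\mid X).
\end{equation}
On the fallback event the posterior is $\delta_0$, the mean is $\mathcal C_n(0)=0$, and the loss is at most $R^2$.

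Fix $M\ge M_0$ and split the integral over $\{\lVert\theta-\theta_0\rVert_{\ell^2}\le M\rho_{n,\alpha}\}$ and its complement. The first part is at most $M^2\rho_{n,\alpha}^2$. The second part is at most
\begin{equation}
  (r_n+R)^2\,\Pi_{n,\mathrm{reg}}^{\mathrm{ABC}}\{\lVert\theta-\theta_0\rVert_{\ell^2}>M\rho_{n,\alpha}\mid X\}.
\end{equation}
To control this term we need more than the qualitative statement \eqref{eq:implicit-reg-contract}; we need an exponential rate. The proof of Theorem~\ref{thm:implicit-reg-contract} verifies Condition~\ref{ass:meta} with $d=\ell^2$, $\epsilon_n\asymp\rho_{n,\alpha}$ and $K_n$ as in \eqref{eq:reg-rate}. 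We therefore invoke Theorem~\ref{thm:meta-ideal} in its on-event form \eqref{eq:meta-ideal-on-event}: on $\mathcal E_n$ the posterior mass outside the ball is at most $2e^{-\gamma_MK_n}$. The second part is then at most $2(\sqrt{K_n}+R)^2e^{-\gamma_MK_n}$, which is $o(K_n^{-2\alpha})=o(\rho_{n,\alpha}^2)$ because $K_n$ grows polynomially in $n$.

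Off $\mathcal E_n$ we use the crude bound: the loss is at most $(r_n+R)^2$, covering the fallback case as well. By (B1) this contributes $(\sqrt{K_n}+R)^2e^{-cK_n}=o(\rho_{n,\alpha}^2)$. All constants are uniform over $\Theta_\alpha(R)$ because Condition~\ref{ass:meta} holds uniformly in $f_0$. Taking expectations and combining the three pieces gives $\sup_{\theta_0}\mathrm E_{\theta_0}^{(n)}\lVert\widehat\theta^{\mathrm{clip}}_{n,\mathrm{reg}}-\theta_0\rVert^2_{\ell^2}\le C\rho_{n,\alpha}^2=Cn^{-2\alpha/(2\alpha+1)}$.

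The main obstacle is confirming that the proof of Theorem~\ref{thm:implicit-reg-contract} genuinely establishes Condition~\ref{ass:meta} uniformly over $\Theta_\alpha(R)$. If so, the on-event exponential bound of Theorem~\ref{thm:meta-ideal} is available, rather than only the in-expectation limit. Without that rate, the polynomial factor $K_n$ introduced by clipping could not be absorbed. Clipping is essential here because the prior's tails (the Gaussian series) make the unclipped posterior second moment hard to control off the sieve. A minor point is measurability and Bochner integrability of $\mathcal C_n(\theta)$, which follows from its boundedness and continuity.
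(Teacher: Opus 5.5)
Your proof is correct and follows the paper's argument essentially step for step. It uses nonexpansiveness of $\mathcal C_n$ once $R\le r_n$, the on-event exponential bound \eqref{eq:meta-ideal-on-event} of Theorem~\ref{thm:meta-ideal} (available because the proof of Theorem~\ref{thm:implicit-reg-contract} verifies Condition~\ref{ass:meta} uniformly), the cap $(r_n+R)^2$ on the loss off the contraction ball and on $\mathcal E_n^c$ via (B1), and Jensen's inequality; the one cosmetic fix is that the exponential tail bound is stated for the ball of radius $M\epsilon_n$, so split there rather than at $M\rho_{n,\alpha}$ (note $\epsilon_n\ge\rho_{n,\alpha}$), which costs only a constant since $\epsilon_n\asymp\rho_{n,\alpha}$.
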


\begin{proof}
See Appendix~\ref{app:gk-proofs}.
\end{proof}

\subsection{Minimax lower bound}

\begin{theorem}[$g$-and-$k$ error regression minimax lower bound]
\label{thm:implicit-reg-lower}
There exists $c>0$ such that, for all sufficiently large $n$,
\begin{equation}
  \inf_{\widetilde\theta_n}\sup_{\theta_0\in\Theta_\alpha(R)}
  \mathrm E_{\theta_0}^{(n)}\lVert\widetilde\theta_n-\theta_0\rVert_{\ell^2}^2
  \ge c n^{-2\alpha/(2\alpha+1)},
  \label{eq:implicit-reg-lower}
\end{equation}
where the infimum is over all estimators based on \eqref{eq:implicit-regression-model}.
\end{theorem}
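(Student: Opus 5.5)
We prove the lower bound with Assouad's lemma on a hypercube of coefficient sequences supported on a dyadic-type block of frequencies of size comparable to $K_n$. The only model-specific input is the local shift-Kullback--Leibler bound \eqref{eq:implicit-shift-kl} of Proposition~\ref{prop:implicit-error}.

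Set $k=\lceil n^{1/(2\alpha+1)}\rceil$ and, for $\omega\in\{0,1\}^k$, define
\begin{equation}
  \theta^\omega_j=\delta\,\omega_{j-k}\,n^{-1/2}\,\mathbf 1\{k<j\le 2k\},
\end{equation}
with a small constant $\delta>0$. Then
\begin{equation}
  \sum_j j^{2\alpha}(\theta^\omega_j)^2\le \delta^2 (2k)^{2\alpha}k/n\lesssim\delta^2 ,
\end{equation}
so $\theta^\omega\in\Theta_\alpha(R)$ once $\delta$ is small, uniformly in $n$. Since $2k\le n$ for large $n$, all indices are valid DCT frequencies.

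Under $\theta$, the mean of observation $i$ is $m_i(\theta)=\sum_j\theta_j\phi_{j,n}(i)$, and the observations are independent shifts of the density $q_{g,c}$. For neighbours $\omega,\omega'$ differing in one coordinate $j$, the mean differences are
\begin{equation}
  |m_i(\theta^\omega)-m_i(\theta^{\omega'})|=\delta n^{-1/2}|\phi_{j,n}(i)|\le\sqrt2\,\delta n^{-1/2}.
\end{equation}
This is at most $t_0$ for large $n$. By tensorization of KL and \eqref{eq:implicit-shift-kl},
\begin{equation}
  \mathrm{KL}(\mathrm P^{(n)}_{\theta^\omega},\mathrm P^{(n)}_{\theta^{\omega'}})\le C_q\sum_i \delta^2 n^{-1}\phi_{j,n}(i)^2=C_q\delta^2,
\end{equation}
using the orthonormality \eqref{eq:dct-orthogonality}. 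Pinsker then gives $\mathrm{TV}\le (C_q\delta^2/2)^{1/2}\le 1/2$ for small $\delta$.

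The loss separates across coordinates:
\begin{equation}
  \lVert\theta^\omega-\theta^{\omega'}\rVert_{\ell^2}^2=\delta^2 n^{-1}\,\mathrm{Ham}(\omega,\omega').
\end{equation}
Assouad's lemma, in the form $\inf\max_\omega \mathrm E\lVert\tilde\theta-\theta^\omega\rVert^2\ge \frac{k}{8}\,\delta^2 n^{-1}\,(1-\max \mathrm{TV})$ with the usual constant, therefore yields a lower bound of order $\delta^2k/n\asymp n^{-2\alpha/(2\alpha+1)}$. The supremum over $\Theta_\alpha(R)$ dominates the maximum over the cube.

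The main point needing care is that only the first $n$ coefficients enter the model \eqref{eq:implicit-regression-model}, while the loss is on all of $\ell^2$. This causes no trouble here because the cube is supported inside $\{1,\dots,n\}$, and estimators may be projected to these coordinates without loss.

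The other delicate step is keeping the per-observation shifts inside the radius $t_0$ where \eqref{eq:implicit-shift-kl} holds. This works because $|\phi_{j,n}|\le\sqrt2$ and only one coordinate changes between neighbours, so the shift is $O(n^{-1/2})$. Uniform sup-norm bounds on $m_i$ are therefore not needed for neighbour pairs.
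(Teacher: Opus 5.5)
Your proposal is correct and follows essentially the same route as the paper: an Assouad hypercube on the coefficient block $\{k+1,\dots,2k\}$ with amplitude of order $n^{-1/2}$, with the shift-Kullback--Leibler bound of Proposition~\ref{prop:implicit-error} tensorized using DCT orthonormality and $|\phi_{j,n}|\le\sqrt2$, followed by Pinsker's inequality. The differences are only cosmetic: you use a $\{0,1\}$ cube instead of a $\{-1,1\}$ cube, and you secure the ellipsoid inclusion through $\delta$ alone rather than through a separate block-size constant $c_K$.
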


\begin{proof}
See Appendix~\ref{app:gk-proofs}.
\end{proof}

Thus the clipped ABC posterior mean is minimax-rate optimal in this setting.

\subsection{Monte Carlo estimators}

\begin{corollary}[$g$-and-$k$ error regression Monte Carlo contraction]
\label{cor:implicit-reg-mc}
Under the conditions of Theorem~\ref{thm:implicit-reg-contract}, there is $\kappa<\infty$ such that, if $N_n e^{-\kappa K_n}\to\infty$, then, for every fixed $M\ge M_0$, where $M_0$ is as in Theorem~\ref{thm:implicit-reg-contract},
\begin{equation}
  \sup_{\theta_0\in\Theta_\alpha(R)}\mathrm E_{\theta_0}^{(n)}\mathrm E_{\mathrm{MC}}
  \widehat\Pi_{n,N,\mathrm{reg}}^{\mathrm{ABC}}
  \left\{\theta:\lVert\theta-\theta_0\rVert_{\ell^2}>M\rho_{n,\alpha}\mid X\right\}\to0.
  \label{eq:implicit-reg-mc-contract}
\end{equation}
\end{corollary}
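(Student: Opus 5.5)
The plan is to deduce Corollary~\ref{cor:implicit-reg-mc} directly from Theorem~\ref{thm:meta-mc}. Theorem~\ref{thm:meta-mc} applies as soon as the regression model satisfies Condition~\ref{ass:meta}, with $\Theta=\ell^2$, $d$ the $\ell^2$ metric, $\mathcal F_n=\Theta_\alpha(R)$, $\Pi_n=\Pi$, $\theta^\dagger=0$, and $K_n,\epsilon_n$ as in \eqref{eq:reg-rate}. The proof of Theorem~\ref{thm:implicit-reg-contract} in Appendix~\ref{app:gk-proofs} should produce exactly this verification for every $A>A_0$. I would therefore state, as the first step, that the argument behind Theorem~\ref{thm:implicit-reg-contract} establishes (B1)--(B5) with constants depending on $A$, $\alpha$, $R$, $g$, $c$, $\lambda_\pm$. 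If the appendix proves the theorem by directly invoking Theorem~\ref{thm:meta-ideal}, this is immediate.

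For completeness I would recall how each condition is checked.

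\emph{(B1).} Take $\mathcal E_n=\{\lVert S_{n,K_n}(X)-\bm\theta_{0,K_n}\rVert_2\le C_X\epsilon_n\}$. Its complement has probability at most $e^{-cK_n}$ by sub-Gaussianity of the errors (Proposition~\ref{prop:implicit-error}). The summary noise is $n^{-1}\sum_i\phi_{j,n}(i)\varepsilon_i$. Since the noise vector has covariance $\sigma^2 I_{K_n}/n$ by \eqref{eq:dct-orthogonality}, a sub-Gaussian norm bound gives $\lVert\cdot\rVert_2\lesssim\sqrt{K_n/n}+t$, and hence exponential tails at level $K_n$ because $\sqrt{K_n/n}\asymp\epsilon_n$.

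\emph{(B2)--(B3).} Take $\mathcal B_n(\theta_0)$ to be the set of $\theta$ with $\lVert\bm\theta_{K_n}-\bm\theta_{0,K_n}\rVert_2\le\epsilon_n$ and $\lVert\bm\theta_{>K_n}\rVert_{\ell^2}\le\epsilon_n$. The Gaussian small-ball and shifted-ball estimates for the series prior, with $\Lambda$ confined to $[\lambda_-,\lambda_+]$, give prior mass at least $e^{-C_\Pi K_n}$. This uses $\sum_{j\le K_n}j^{2\alpha+1}\theta_{0j}^2\lesssim K_n R^2$. For such $\theta$ on $\mathcal E_n$, acceptance holds with constant probability once $A>A_0$. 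This step needs care because the mean of $S_{n,K_n}(Z)$ is not exactly $\bm\theta_{K_n}$: coordinates $j\le n$ enter through the design, while coordinates $j>n$ are ignored by the model \eqref{eq:implicit-regression-model}. I would handle this by orthogonality, since the mean is exactly $\bm\theta_{K_n}$ by \eqref{eq:dct-orthogonality}.

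\emph{(B4)--(B5).} Take $\mathcal G_n=\{\lVert\bm\theta_{>K_n}\rVert_{\ell^2}\le C_G'\epsilon_n\}$. On this set, $\lVert\theta-\theta_0\rVert>M\epsilon_n$ forces $\lVert\bm\theta_{K_n}-\bm\theta_{0,K_n}\rVert_2\gtrsim(M-C_S)\epsilon_n$. The separation \eqref{eq:meta-separation-explanation} combined with sub-Gaussian concentration then gives (B4). Gaussian tail bounds for $\sum_{j>K_n}j^{-2\alpha-1}G_j^2$, whose mean is $\asymp K_n^{-2\alpha}$, give (B5) with $C_G'$ large.

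With Condition~\ref{ass:meta} in hand, Corollary~\ref{cor:meta-denom} supplies $\kappa$, and Theorem~\ref{thm:meta-mc} gives \eqref{eq:meta-mc-contract} for every $M\ge M_0$ under $N_ne^{-\kappa K_n}\to\infty$. Since $\epsilon_n\asymp\rho_{n,\alpha}$, say $\epsilon_n\le C_1\rho_{n,\alpha}$, we have $\{\lVert\theta-\theta_0\rVert>M\rho_{n,\alpha}\}\subset\{\lVert\theta-\theta_0\rVert>(M/C_1)\epsilon_n\}$. So I take the corollary's $M_0$ to be $C_1$ times the meta-theorem's $M_0$, enlarging the $M_0$ of Theorem~\ref{thm:implicit-reg-contract} if necessary; this conversion is also how that theorem is stated. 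The monotonicity of the posterior mass in the set then gives \eqref{eq:implicit-reg-mc-contract}.

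The main obstacle is bookkeeping rather than new analysis. One must confirm that the $\kappa$ and $M_0$ obtained from the Condition~\ref{ass:meta} verification coincide with, or can be chosen compatibly with, those referenced in Theorem~\ref{thm:implicit-reg-contract}, uniformly over $\theta_0\in\Theta_\alpha(R)$. The tail sieve control (B5) uniformly in $\Lambda$ is the only nontrivial estimate, and it is already part of the ideal-contraction proof.
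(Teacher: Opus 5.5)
Your proposal is correct and follows the paper's argument. The paper's proof is the single observation that the claim follows from Theorem~\ref{thm:meta-mc}, because the proof of Theorem~\ref{thm:implicit-reg-contract} already verifies Condition~\ref{ass:meta} (using the local set of Lemma~\ref{lem:reg-prior-mass} and the tail sieve of Lemma~\ref{lem:reg-prior-sieve}) and converts $\epsilon_n$ to $\rho_{n,\alpha}$ exactly as you do. One clean-up: your ``needs care'' remark about the summary mean can be dropped, since $K_n\le n$ and \eqref{eq:dct-orthogonality} give $\mathrm E_\theta^{(n)}S_{n,K_n}(Z)=\bm\theta_{K_n}$ exactly.
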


\begin{proof}
See Appendix~\ref{app:gk-proofs}.
\end{proof}

\begin{corollary}[$g$-and-$k$ error regression empirical clipped posterior mean risk]
\label{cor:implicit-reg-mc-risk}
Under the conditions of Corollary~\ref{cor:implicit-reg-mc}, define
\begin{equation}
  \widehat\theta_{n,N,\mathrm{reg}}^{\mathrm{clip}}
  =\int_{\ell^2}\mathcal C_n(\theta)\widehat\Pi_{n,N,\mathrm{reg}}^{\mathrm{ABC}}(\mathrm d\theta\mid X).
  \label{eq:implicit-reg-mc-barycenter}
\end{equation}
If $N_n e^{-\kappa K_n}\ge C_N\log(1/\rho_{n,\alpha})$, where $C_N$ is sufficiently large, then
\begin{equation}
  \sup_{\theta_0\in\Theta_\alpha(R)}\mathrm E_{\theta_0}^{(n)}\mathrm E_{\mathrm{MC}}
  \left\lVert\widehat\theta_{n,N,\mathrm{reg}}^{\mathrm{clip}}-\theta_0\right\rVert_{\ell^2}^2
  \le Cn^{-2\alpha/(2\alpha+1)}.
  \label{eq:implicit-reg-mc-risk}
\end{equation}
\end{corollary}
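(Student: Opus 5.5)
We would deduce the statement from Corollary~\ref{cor:meta-mc-mean}, applied to the regression model with the clipped parametrization. Two facts make this workable. First, $\mathcal C_n$ is $1$-Lipschitz, being the metric projection onto a closed convex set in a Hilbert space. Second, $\lVert\theta_0\rVert_{\ell^2}\le R<r_n$ for all large $n$, uniformly over $\Theta_\alpha(R)$, so $\mathcal C_n(\theta_0)=\theta_0$ and hence
\begin{equation}
\lVert\mathcal C_n(\theta)-\theta_0\rVert_{\ell^2}\le\lVert\theta-\theta_0\rVert_{\ell^2}.
\end{equation}
The empirical clipped mean is the empirical ABC posterior mean of the pushforward draws $\mathcal C_n(\theta_i)$. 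The Monte Carlo law of the accepted pushforward draws is exactly that of rejection ABC run with prior $\Pi\circ\mathcal C_n^{-1}$, because acceptance depends only on $\theta$ and the indicators are unchanged. The fallback point zero satisfies \eqref{eq:meta-fallback-bounded}, since $d(0,\theta_0)\le R$.

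Rather than re-verify Condition~\ref{ass:meta} for the pushforward, we would redo the short argument behind Corollary~\ref{cor:meta-mc-mean} directly. Condition~\ref{ass:meta} for the regression model was already established in the proof of Theorem~\ref{thm:implicit-reg-contract}, with $K_n\asymp n\rho_{n,\alpha}^2$, so Corollary~\ref{cor:meta-denom} gives $\kappa$ and $c'$. We split on the events $\{A_{n,N}=0\}$ and $\{A_{n,N}\ge1\}$.

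\textbf{Event $\{A_{n,N}\ge1\}$.} Conditionally on $X$ and on $A_{n,N}=a\ge1$, the accepted $\theta_i$ are iid from $\Pi_{n,\mathrm{reg}}^{\mathrm{ABC}}(\cdot\mid X)$. Jensen's inequality then gives
\begin{equation}
\mathrm E_{\mathrm{MC}}\bigl[\lVert\widehat\theta^{\mathrm{clip}}_{n,N,\mathrm{reg}}-\theta_0\rVert_{\ell^2}^2\,\big|\,A_{n,N}=a\bigr]\le\int\lVert\mathcal C_n(\theta)-\theta_0\rVert_{\ell^2}^2\,\Pi^{\mathrm{ABC}}_{n,\mathrm{reg}}(\mathrm d\theta\mid X).
\end{equation}
The required input is therefore the ideal second-moment bound \eqref{eq:meta-ideal-second} for the clipped loss,
\begin{equation}
\mathrm E_{\theta_0}^{(n)}\int\lVert\mathcal C_n(\theta)-\theta_0\rVert_{\ell^2}^2\,\Pi^{\mathrm{ABC}}_{n,\mathrm{reg}}(\mathrm d\theta\mid X)\lesssim\rho_{n,\alpha}^2.
\end{equation}
This is presumably what the proof of Corollary~\ref{cor:implicit-reg-risk} established; if that proof only bounded the clipped mean, we would re-derive the bound as follows. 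Split the integral at $\lVert\theta-\theta_0\rVert_{\ell^2}\le M\rho_{n,\alpha}$. The inner part contributes at most $M^2\rho_{n,\alpha}^2$. On the outer part the integrand is at most $(r_n+R)^2\lesssim K_n$. On $\mathcal E_n$ the outer posterior mass is at most $2e^{-\gamma_MK_n}$ by Theorem~\ref{thm:meta-ideal}, and off $\mathcal E_n$ we use (B1). The outer contribution is therefore $\lesssim K_ne^{-\min(\gamma_M,c)K_n}=o(\rho_{n,\alpha}^2)$, because $K_n$ grows polynomially in $n$.

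\textbf{Event $\{A_{n,N}=0\}$.} Here the loss is at most $R^2$. On $\{D_n(X)\ge e^{-\kappa K_n}\}$,
\begin{equation}
\mathrm P_{\mathrm{MC}}(A_{n,N}=0\mid X)=(1-D_n)^{N_n}\le\exp(-N_ne^{-\kappa K_n})\le\rho_{n,\alpha}^{C_N}\le\rho_{n,\alpha}^2
\end{equation}
once $C_N\ge2$. The complementary event has probability at most $e^{-c'K_n}=O(\rho_{n,\alpha}^2)$ by Corollary~\ref{cor:meta-denom}.

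Summing the two contributions and taking the supremum over $\theta_0\in\Theta_\alpha(R)$ gives \eqref{eq:implicit-reg-mc-risk}, since $\rho_{n,\alpha}^2=n^{-2\alpha/(2\alpha+1)}$. The main obstacle is the ideal second-moment bound for the clipped loss, in particular its uniformity over $\theta_0$ and the interplay between the clipping radius $r_n=\sqrt{K_n}$ and the exponential tail; everything else is bookkeeping. A further point to check is measurability and Bochner integrability of the empirical mean, which is immediate here because it is a finite average of elements of norm at most $r_n$.
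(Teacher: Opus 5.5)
Your direct argument is correct and is essentially the paper's proof. Jensen's inequality on $\{A_{n,N}\ge1\}$ reduces the risk to the ideal clipped second moment already bounded in the proof of Corollary~\ref{cor:implicit-reg-risk}, via the same $M^2\epsilon_n^2+2(r_n+R)^2e^{-\gamma_MK_n}$ split you re-derive, and on $\{A_{n,N}=0\}$ the loss $R^2$ is multiplied by $e^{-c'K_n}+\exp\{-N_ne^{-\kappa K_n}\}=O(\rho_{n,\alpha}^2)$. The one inaccuracy is your opening aside that the accepted clipped draws have the law of rejection ABC under the prior $\Pi\circ\mathcal C_n^{-1}$: that sampler would simulate and evaluate acceptance at $\mathcal C_n(\theta)$ rather than at $\theta$. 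Your argument never uses this, so nothing breaks.
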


\begin{proof}
See Appendix~\ref{app:gk-proofs}.
\end{proof}

\section{Nonparametric decompounding from aggregate observations}
\label{sec:density}

\subsection{Compound Poisson observations}

Fix $d\in\mathbb N$ and write $\mathcal X=[0,1]^d$. Let
\begin{equation}
  \mathcal D=\left\{p\in\mathcal L^2(\mathcal X):p\ge0\ \text{a.e.},\ \int_{\mathcal X}p(x)\,\mathrm dx=1\right\}
  \label{eq:density-space}
\end{equation}
be the ambient space of jump densities. Let
\begin{equation}
  \mathcal I=[\underline\beta,\overline\beta],
  \qquad 0<\underline\beta<\overline\beta<\pi/2,
  \qquad \Theta=\mathcal I\times\mathcal D.
  \label{eq:decompound-parameter-space}
\end{equation}
Equip $\Theta$ with the product Hilbert metric
\begin{equation}
  d_\Theta\left((\beta,p),(\widetilde\beta,\widetilde p)\right)
  =\left(|\beta-\widetilde\beta|^2+\lVert p-\widetilde p\rVert_2^2\right)^{1/2}.
  \label{eq:decompound-product-metric}
\end{equation}
The count intensity $\beta_0\in \mathcal I$ is unknown, although the estimation target is $p_0$, with squared $\mathcal L^2$ loss; $\beta_0$ is a nuisance parameter. For each observation period, let $C_i\sim\operatorname{Poisson}(\beta_0)$ and let $Y_{i1},Y_{i2},\ldots$ be iid with density $p_0\in\mathcal D$. Assume independence between all counts and jumps and across periods. We observe only the totals
\begin{equation}
  X_i=\sum_{k=1}^{C_i}Y_{ik},\qquad i=1,\ldots,n,
  \label{eq:decompound-observations}
\end{equation}
with an empty sum equal to zero. This is the vector-valued counterpart of the compound Poisson construction in \citet[Section~1.2]{CekanaviciusNovak2025}; see also \citet{GugushviliVanderMeulenSpreij2015}. In one dimension, this models aggregate amounts from a Poisson number of individual contributions with a known bounded range.

Write $\mathrm Q_{\beta,p}$ for the law of one total at intensity $\beta$ and jump density $p$. Extending $p$ by zero outside $\mathcal X$, conditioning on the count gives
\begin{equation}
\begin{aligned}
  \mathrm Q_{\beta,p}(\mathrm dx)&=e^{-\beta}\delta_0(\mathrm dx)+q_{\beta,p}(x)\,\mathrm dx,\\
  q_{\beta,p}(x)&=e^{-\beta}\sum_{r=1}^{\infty}\frac{\beta^r}{r!}\,p^{(*r)}(x),
\end{aligned}
\label{eq:decompound-law}
\end{equation}
where $p^{(*r)}$ is the $r$-fold convolution on $\mathbb R^d$. The continuous component has mass $1-e^{-\beta}$. Since the jumps have a density and are nonnegative, $X_i=0$ if and only if $C_i=0$, almost surely. The zero-count indicator $\mathbf 1\{C_i=0\}$ is observable as $\mathbf 1\{X_i=0\}$. Thus $\mathrm Q_{\beta,p}(\{0\})=e^{-\beta}$, independently of $p$. At a candidate $(\beta,p)$, each nonzero observation contributes $q_{\beta,p}(X_i)$ to the likelihood, whereas a zero observation contributes $e^{-\beta}$. Simulation at $(\beta,p)$ instead draws a $\operatorname{Poisson}(\beta)$ count, generates that many independent jumps from $p$, and sums them.

Nonparametric decompounding estimators based on Fourier inversion are studied by \citet{VanEsGugushviliSpreij2007}. \citet{GugushviliVanderMeulenSpreij2015} establish Bayesian contraction in a Hellinger metric on compound Poisson observation laws; \citet{GugushviliVanderMeulenSpreij2018} also develop a likelihood-based implementation using latent-variable augmentation and Markov chain Monte Carlo. \citet{GoffardLaub2021} use ABC with sequential Monte Carlo and Wasserstein discrepancies to fit and compare insurance loss models from aggregate data. Here the target is the jump density in $\mathcal L^2$ loss, and the ABC comparison uses trigonometric summaries together with the proportion of zero-count periods. The method avoids evaluation of \eqref{eq:decompound-law}.

For every sample size $r$, let $\mathrm P_{\beta,p}^{(r)}=\mathrm Q_{\beta,p}^{\otimes r}$ be the joint law of $r$ independent totals with common law $\mathrm Q_{\beta,p}$, and let $\mathrm E_{\beta,p}^{(r)}$ denote expectation under this law. Convolution with a probability density does not increase the $\mathcal L^1$ norm, by the triangle inequality and Tonelli's theorem. For $r\ge1$, the identity
\begin{equation*}
  p^{(*(r+1))}-\widetilde p^{(*(r+1))}
  =\left(p^{(*r)}-\widetilde p^{(*r)}\right)*p
   +\widetilde p^{(*r)}*(p-\widetilde p)
\end{equation*}
therefore gives $\lVert p^{(*r)}-\widetilde p^{(*r)}\rVert_1\le r\lVert p-\widetilde p\rVert_1$ by induction, starting from $r=1$. Summing this bound with the Poisson weights in \eqref{eq:decompound-law} yields
\begin{equation}
  \mathrm{TV}(\mathrm Q_{\beta,p},\mathrm Q_{\beta,\widetilde p})
  \le\frac{\beta}{2}\lVert p-\widetilde p\rVert_1
  \le\frac{\beta}{2}\lVert p-\widetilde p\rVert_2.
\end{equation}
For the same jump density, couple counts with intensities $\beta\le\widetilde\beta$ by adding an independent $\operatorname{Poisson}(\widetilde\beta-\beta)$ count and use a common jump sequence. The totals agree whenever the added count is zero. Reversing the roles of the intensities when necessary therefore gives
\begin{equation}
  \mathrm{TV}(\mathrm Q_{\beta,p},\mathrm Q_{\widetilde\beta,p})
  \le1-e^{-|\beta-\widetilde\beta|}
  \le|\beta-\widetilde\beta|.
\end{equation}
Combining the two comparisons yields
\begin{equation}
  \mathrm{TV}(\mathrm Q_{\beta,p},\mathrm Q_{\widetilde\beta,\widetilde p})
  \le |\beta-\widetilde\beta|
      +\frac{\overline\beta}{2}\lVert p-\widetilde p\rVert_2.
\end{equation}
Consequently, this model defines measurable probability kernels on $\Theta$. Unless stated otherwise, all bounds below are uniform over $\beta_0\in \mathcal I$ and the specified jump-density class; constants may depend on the fixed endpoints of $\mathcal I$.

\subsection{Periodic Sobolev class and trigonometric spaces}

Let
\begin{equation}
  t_0(x)=1,
  \qquad
  t_{2q-1}(x)=\sqrt2\cos(2\pi qx),
  \qquad
  t_{2q}(x)=\sqrt2\sin(2\pi qx),\quad q\ge1,
\end{equation}
and, for $\boldsymbol j=(j_1,\ldots,j_d)\in\mathbb N_0^d$, set
\begin{equation}
  \varphi_{\boldsymbol j}(x)=\prod_{r=1}^d t_{j_r}(x_r),
  \qquad
  q(\boldsymbol j)=\max_{1\le r\le d}\left\lceil j_r/2\right\rceil.
  \label{eq:tensor-trigonometric-basis}
\end{equation}
These basis functions are products of one-dimensional sine and cosine functions, with a constant factor allowed in each coordinate. Enumerate the constant first and the remaining functions by nondecreasing $q(\boldsymbol j)$, writing $q_j$ for the frequency-block index of $\varphi_j$. Let
\begin{equation}
  \mathcal V_J=\operatorname{span}\{\varphi_j:q_j\le J\},
  \qquad
  D_J=\dim(\mathcal V_J)=(2J+1)^d.
  \label{eq:density-trig-space}
\end{equation}
The product of two elements of $\mathcal V_J$ belongs to $\mathcal V_{2J}$.

For $p\in\mathcal D$, write $\theta_j(p)=\int_{\mathcal X}\varphi_j(x)p(x)\,\mathrm dx$. Fix $\alpha>d/2$, $R>0$, and $0<m<1<M<\infty$, and define
\begin{equation}
  \mathcal P_\alpha(R,m,M)=\left\{p\in\mathcal D:\begin{aligned}
  &m\le p\le M\ \text{a.e.},\\
  &\sum_{j=2}^{\infty}(1+q_j)^{2\alpha}\theta_j(p)^2\le R^2
  \end{aligned}\right\}.
  \label{eq:density-class}
\end{equation}
Since $\theta_1(p)=1$, adding one to the weighted coefficient sum in \eqref{eq:density-class} gives a quantity equivalent to $\lVert p\rVert_{\mathcal H^\alpha}^2$ under the block ordering. Here $\mathcal H^\alpha([0,1]^d)$ denotes the periodic $\mathcal L^2$-Sobolev space. For $\alpha>d/2$, it is a Banach algebra, and $\{\sqrt p:p\in\mathcal P_\alpha(R,m,M)\}$ is bounded in $\mathcal H^\alpha$; see \citet{RunstSickel1996} and Appendix~\ref{app:external}.

Set
\begin{equation}
  J_n=\left\lceil n^{1/(2\alpha+d)}\right\rceil,
  \qquad
  \widetilde K_n=D_{J_n}-1,
  \qquad
  K_n=D_{2J_n}-1,
  \label{eq:density-dimensions}
\end{equation}
\begin{equation}
  \epsilon_n=J_n^{-\alpha}+\sqrt{K_n/n},
  \qquad
  \rho_{n,\alpha,d}=n^{-\alpha/(2\alpha+d)},
  \qquad
  m_n=\left\lceil nD_{J_n}\right\rceil.
  \label{eq:density-rate}
\end{equation}
Then $\widetilde K_n\asymp K_n\asymp J_n^d$, $\epsilon_n\asymp\rho_{n,\alpha,d}$, and $K_n\asymp n\rho_{n,\alpha,d}^2$.

\subsection{Zero-count and trigonometric summaries of aggregate observations}

Extend each trigonometric basis function periodically to $\mathbb R^d$. For an integer $J\ge1$, define
\begin{equation}
\begin{aligned}
  \psi_{2J}(x)&=(\varphi_2(x),\ldots,\varphi_{D_{2J}}(x))^\top,\\
  \eta_J(x)&=\mathbf 1\{x\ne0\}\psi_{2J}(x),
  \qquad
  s_J(\beta,p)=\int_{\mathbb R^d}\eta_J(x)\mathrm Q_{\beta,p}(\mathrm dx).
\end{aligned}
\label{eq:decompound-summary-center}
\end{equation}
The indicator $\mathbf 1\{x\ne0\}$ removes the contribution of the atom from the trigonometric coordinates. Retain the information in that atom in a separate coordinate by defining
\begin{equation}
  \xi_J(x)=\begin{pmatrix}\mathbf 1\{x=0\}\\ \eta_J(x)\end{pmatrix},
  \qquad
  t_J(\beta,p)=\int_{\mathbb R^d}\xi_J(x)\mathrm Q_{\beta,p}(\mathrm dx)
  =\begin{pmatrix}e^{-\beta}\\ s_J(\beta,p)\end{pmatrix}.
  \label{eq:decompound-joint-summary-center}
\end{equation}
The observed and synthetic summaries are
\begin{equation}
  S_n(X)=\frac1n\sum_{i=1}^n\xi_{J_n}(X_i),
  \qquad
  S_{m_n}(Z)=\frac1{m_n}\sum_{i=1}^{m_n}\xi_{J_n}(Z_i),
  \label{eq:density-summary}
\end{equation}
where the $Z_i$ are iid synthetic totals with law $\mathrm Q_{\beta,p}$. In the notation of Section~\ref{sec:notation}, $\mathrm Q_{\beta,p}^{(n)}=\mathrm P_{\beta,p}^{(m_n)}$. The ABC likelihood is
\begin{equation}
  L_n^{\mathrm{ABC}}(\beta,p;X)
  =\mathrm P_{\beta,p}^{(m_n)}\left\{\left\lVert S_{m_n}(Z)-S_n(X)\right\rVert_2\le A\epsilon_n\right\}.
  \label{eq:density-abc-likelihood}
\end{equation}
The summaries have $K_n+1$ coordinates, which is of the same order as $K_n$. Their first coordinates are proportions of zero-count periods, and both averages use all observation periods, including zero-count periods. The larger synthetic sample controls variation uniformly over the square-root prior specified below, whose density envelope grows with $J_n$. The fluctuation scale of $S_n(X)$ around $t_{J_n}(\beta_0,p_0)$ remains $\sqrt{K_n/n}$.

\subsection{Square-root Gaussian series prior and intensity prior}

Use the scale hyperprior in \eqref{eq:reg-scale-hyperprior}. Conditional on $\Lambda$, draw independent coefficients
\begin{equation}
  v_j=\Lambda(1+q_j)^{-\alpha-d/2}G_j,
  \qquad j=2,\ldots,D_{J_n},
  \label{eq:density-square-root-coefficients}
\end{equation}
set
\begin{equation}
  g_v=1+\sum_{j=2}^{D_{J_n}}v_j\varphi_j,
  \qquad
  p_v=\frac{g_v^2}{\lVert g_v\rVert_{\mathcal L^2}^2}.
  \label{eq:density-square-root-prior}
\end{equation}
By orthonormality, $\lVert g_v\rVert_2^2=1+\sum_{j=2}^{D_{J_n}}v_j^2$, so the normalizing constant is explicit. Let $\Gamma_n$ be the induced coefficient law of $v$ and $\Pi_n=\Gamma_n\circ(v\mapsto p_v)^{-1}$ its pushforward to $\mathcal D$. Every density $p_v$ is nonnegative and integrates to one, even when $g_v$ changes sign. We take $\operatorname{supp}(\Pi_n)$ with respect to the $\mathcal L^2$ topology. Since $\mathcal V_{2J_n}$ is finite-dimensional and therefore closed in $\mathcal L^2$, and since the constraints $0\le p\le D_{J_n}$ almost everywhere are preserved under $\mathcal L^2$ limits, every $p\in\operatorname{supp}(\Pi_n)$ belongs to $\mathcal V_{2J_n}$ and satisfies $0\le p\le D_{J_n}$ almost everywhere. The fallback density is $p^\dagger=1$.

Independently of the density coefficients and their scale, assign $\beta$ a simulable probability density $w$ on $\mathcal I$ satisfying
\begin{equation}
  0<w_-\le w(\beta)\le w_+<\infty\quad\text{for almost every }\beta\in \mathcal I.
  \label{eq:decompound-intensity-prior}
\end{equation}
The uniform density on $\mathcal I$ is one such choice. The joint prior is
\begin{equation}
  \widetilde\Pi_n(\mathrm d\beta,\mathrm dp)
  =w(\beta)\,\mathrm d\beta\,\Pi_n(\mathrm dp).
  \label{eq:decompound-joint-prior}
\end{equation}
Let $\widetilde\Pi_n^{\mathrm{ABC}}$ and $\widehat{\widetilde\Pi}_{n,N}^{\mathrm{ABC}}$ be the joint ideal and empirical posteriors defined by Section~\ref{sec:notation}, using this prior, the likelihood in \eqref{eq:density-abc-likelihood}, and a fixed fallback point $(\beta^\dagger,1)$ with $\beta^\dagger\in \mathcal I$. The joint ABC acceptance normalizer is
\begin{equation}
  D_n(X)=\int_{\mathcal I}\int_{\mathcal D}
  L_n^{\mathrm{ABC}}(\beta,p;X)\Pi_n(\mathrm dp)w(\beta)\,\mathrm d\beta.
  \label{eq:decompound-joint-normalizer}
\end{equation}
For a Borel set $B\subset\mathcal D$, write
\begin{equation}
\begin{aligned}
  \Pi_n^{\mathrm{ABC}}(B\mid X)
  &=\widetilde\Pi_n^{\mathrm{ABC}}(\mathcal I\times B\mid X),\\
  \widehat\Pi_{n,N}^{\mathrm{ABC}}(B\mid X)
  &=\widehat{\widetilde\Pi}_{n,N}^{\mathrm{ABC}}(\mathcal I\times B\mid X).
\end{aligned}
\label{eq:decompound-density-marginals}
\end{equation}
The contraction and risk statements below concern these density marginals. The intensity is integrated out, rather than held fixed or replaced by a point estimate.

\begin{proposition}[Sampling from a square-root series density]
\label{prop:density-simulator}
For every $v$,
\begin{equation}
  0\le p_v(x)\le D_{J_n},\qquad x\in[0,1]^d.
  \label{eq:density-envelope}
\end{equation}
Consequently, exact iid sampling from $p_v$ is obtained as follows: draw $U\sim\mathrm{Unif}([0,1]^d)$ and $V\sim\mathrm{Unif}(0,1)$ independently, and accept $U$ when $V\le p_v(U)/D_{J_n}$. The acceptance probability is $D_{J_n}^{-1}$.
\end{proposition}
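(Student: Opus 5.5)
The plan is to obtain the envelope from a Cauchy--Schwarz bound on the square-root function $g_v$, and then to read off the sampler's correctness from the standard rejection-sampling argument.

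\textbf{Envelope.} Nonnegativity is immediate, since $p_v=g_v^2/\lVert g_v\rVert_2^2$ and $\lVert g_v\rVert_2^2\ge1>0$. For the upper bound, write $g_v=\sum_{j=1}^{D_{J_n}}v_j\varphi_j$ with $v_1=1$. This is an element of $\mathcal V_{J_n}$, because the coefficients in \eqref{eq:density-square-root-coefficients} run only over $j\le D_{J_n}$. By Cauchy--Schwarz at each fixed $x$,
\begin{equation}
  g_v(x)^2\le\Bigl(\sum_{j=1}^{D_{J_n}}v_j^2\Bigr)\Bigl(\sum_{j=1}^{D_{J_n}}\varphi_j(x)^2\Bigr)
  =\lVert g_v\rVert_2^2\sum_{j=1}^{D_{J_n}}\varphi_j(x)^2 .
\end{equation}
The key step is then to show that the Christoffel-type function $\sum_{j\le D_{J_n}}\varphi_j(x)^2$ equals $D_{J_n}$ identically. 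The basis is a tensor product, so this sum factorizes as $\prod_{r=1}^d\sum_{k=0}^{2J_n}t_k(x_r)^2$. In one dimension,
\begin{equation}
  1+\sum_{q=1}^{J_n}2\{\cos^2(2\pi qx)+\sin^2(2\pi qx)\}=2J_n+1,
\end{equation}
so the product is $(2J_n+1)^d=D_{J_n}$. Dividing by $\lVert g_v\rVert_2^2$ gives $p_v(x)\le D_{J_n}$ on $[0,1]^d$.

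The one point needing care is checking that $\{\varphi_j:q_j\le J\}$ is exactly the full tensor set $\{\boldsymbol j:\max_r\lceil j_r/2\rceil\le J\}=\{0,\ldots,2J\}^d$. This follows from the definition of $q(\boldsymbol j)$, and it is what makes the factorization valid. Once that identification is made, the remaining steps are routine.

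\textbf{Sampler.} Take the proposal $U\sim\mathrm{Unif}([0,1]^d)$, whose density is $1$, and the envelope constant $D_{J_n}$, so that $p_v\le D_{J_n}\cdot 1$. For a Borel set $B$,
\begin{equation}
  \mathrm P(U\in B,\ V\le p_v(U)/D_{J_n})=\int_B p_v(u)\,\mathrm du\,/D_{J_n}.
\end{equation}
Taking $B=[0,1]^d$ and using $\int p_v=1$ gives acceptance probability $D_{J_n}^{-1}$. Dividing, the accepted $U$ has conditional law $p_v(u)\,\mathrm du$. Independent repetitions accept after a geometric number of trials, which is almost surely finite, and the accepted points are iid from $p_v$. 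Because $p_v$ is an explicit finite trigonometric sum with an explicit normalizer, each acceptance test can be evaluated exactly.
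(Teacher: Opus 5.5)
Your proposal is correct and follows the paper's proof: Cauchy--Schwarz against the identity $\sum_{j\le D_{J_n}}\varphi_j(x)^2=(2J_n+1)^d=D_{J_n}$ for the full tensor trigonometric block gives the envelope, and the standard rejection identity $\mathrm P(U\in B,\ V\le p_v(U)/D_{J_n})=D_{J_n}^{-1}\int_B p_v$ gives both the acceptance probability and the correct conditional law. Your explicit check that $\{\boldsymbol j:q(\boldsymbol j)\le J\}=\{0,\ldots,2J\}^d$ is the same step the paper uses implicitly when it factorizes the sum.
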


\begin{proof}
See Appendix~\ref{app:density-proofs}.
\end{proof}

For $p_0\in\mathcal P_\alpha(R,m,M)$, put
\begin{equation}
  c_0=\int_{\mathcal X}\sqrt{p_0(x)}\,\mathrm dx,
  \qquad
  g_0=\frac{\sqrt{p_0}}{c_0}.
  \label{eq:density-target-square-root}
\end{equation}
The constant coefficient of $g_0$ is one and $p_0=g_0^2/\lVert g_0\rVert_2^2$. Let $g_{0,J}$ be the orthogonal projection of $g_0$ onto $\mathcal V_J$ and let $v_{0,J}$ be its nonconstant coefficient vector.

\begin{proposition}[Square-root approximation and local prior mass]
\label{prop:density-local-mass}
There are constants $C,\delta>0$ such that, for all sufficiently large $n$, uniformly over $p_0\in\mathcal P_\alpha(R,m,M)$,
\begin{equation}
  \left\lVert\frac{g_{0,J_n}^2}{\lVert g_{0,J_n}\rVert_2^2}-p_0\right\rVert_2
  \le CJ_n^{-\alpha},
  \label{eq:density-square-root-bias}
\end{equation}
and
\begin{equation}
  \Gamma_n\left\{v:\lVert v-v_{0,J_n}\rVert_2\le\delta\epsilon_n\right\}
  \ge e^{-CK_n}.
  \label{eq:density-square-root-local-mass}
\end{equation}
Every coefficient vector in this ball induces a density within $C\epsilon_n$ of $p_0$ in $\mathcal L^2$. Consequently, the image of this ball is a local set satisfying the same lower bound under $\Pi_n$.
\end{proposition}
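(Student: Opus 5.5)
We split the proof into three parts: the deterministic bias bound \eqref{eq:density-square-root-bias}, a Lipschitz bound for the map $v\mapsto p_v$ near $v_{0,J_n}$, and a Gaussian small-ball estimate for $\Gamma_n$. Throughout, write $J=J_n$.

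\textbf{Bias bound.} By the Banach-algebra property recalled after \eqref{eq:density-class} and Appendix~\ref{app:external}, $\sqrt{p_0}$ lies in a bounded set of $\mathcal H^\alpha$. Since $\sqrt m\le\sqrt{p_0}\le\sqrt M$ and $\int\sqrt{p_0}\ge\sqrt m$, we have $c_0\in[\sqrt m,\sqrt M]$, so $g_0=\sqrt{p_0}/c_0$ is also bounded in $\mathcal H^\alpha$, uniformly over the class. The standard trigonometric projection estimate then gives
\begin{equation}
\lVert g_0-g_{0,J}\rVert_2\lesssim J^{-\alpha}.
\end{equation}
Because $\alpha>d/2$, the Sobolev/Fourier-tail bound $\sum_{q_j>J}|\theta_j(g_0)|\lesssim J^{d/2-\alpha}\to0$ also gives $\lVert g_0-g_{0,J}\rVert_\infty\lesssim J^{d/2-\alpha}$, so $g_{0,J}$ is uniformly bounded for large $J$. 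We then write
\begin{equation}
g_{0,J}^2-g_0^2=(g_{0,J}-g_0)(g_{0,J}+g_0),
\end{equation}
which yields $\lVert g_{0,J}^2-g_0^2\rVert_2\lesssim J^{-\alpha}$. The normalizers satisfy $\bigl|\lVert g_{0,J}\rVert_2^2-\lVert g_0\rVert_2^2\bigr|\lesssim J^{-\alpha}$, and both are bounded below by $1$, since the constant coefficient equals one. A quotient comparison then gives \eqref{eq:density-square-root-bias}.

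\textbf{Lipschitz step.} Fix $v$ with $\lVert v-v_{0,J}\rVert_2\le\delta\epsilon_n$, and note $\lVert g_v-g_{0,J}\rVert_2=\lVert v-v_{0,J}\rVert_2$. The main difficulty is that $\lVert g_v^2-g_{0,J}^2\rVert_2$ requires sup-norm control of $g_v-g_{0,J}$. The crude inverse inequality on $\mathcal V_J$ gives
\begin{equation}
\lVert h\rVert_\infty\le D_J^{1/2}\lVert h\rVert_2,
\end{equation}
so $\lVert g_v-g_{0,J}\rVert_\infty\le D_J^{1/2}\delta\epsilon_n\asymp\sqrt{K_n}\,\epsilon_n$. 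This is bounded only if $K_n\epsilon_n^2=O(1)$, which fails in general. Instead we estimate
\begin{equation}
\lVert g_v^2-g_{0,J}^2\rVert_2\le\lVert g_v-g_{0,J}\rVert_2\bigl(\lVert g_v\rVert_\infty+\lVert g_{0,J}\rVert_\infty\bigr),
\end{equation}
with $\lVert g_v\rVert_\infty\le\lVert g_{0,J}\rVert_\infty+\sqrt{D_J}\,\delta\epsilon_n$. Since $\sqrt{D_J}\,\epsilon_n\asymp J^{d/2-\alpha}\to0$ when $\alpha>d/2$, the right-hand side is $O(\epsilon_n)$. The normalizers are again bounded below by $1$ and differ by $O(\epsilon_n)$, so $\lVert p_v-p_{0,J}\rVert_2\lesssim\epsilon_n$, where $p_{0,J}=g_{0,J}^2/\lVert g_{0,J}\rVert_2^2$. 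Combining this with the bias bound and $J^{-\alpha}\le\epsilon_n$ shows that $p_v$ lies within $C\epsilon_n$ of $p_0$. Measurability of the image set follows by taking the pushforward.

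\textbf{Small-ball bound.} Condition on $\Lambda=\lambda\in[\lambda_-,\lambda_+]$. The coordinates are independent with $v_j\sim\mathrm N(0,\sigma_j^2)$, where $\sigma_j=\lambda(1+q_j)^{-\alpha-d/2}$, and there are $\widetilde K_n$ of them. It suffices to bound the product of the coordinatewise probabilities
\begin{equation}
\mathrm P\bigl(|v_j-v_{0j}|\le\delta\epsilon_n/\sqrt{\widetilde K_n}\bigr).
\end{equation}
Each is at least $(2h_n/\sigma_j)\,\varphi\bigl((|v_{0j}|+h_n)/\sigma_j\bigr)$ with $h_n=\delta\epsilon_n/\sqrt{\widetilde K_n}$, up to a constant. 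Taking logarithms, $\log(\sigma_j/h_n)\lesssim\log n$. This is too large: summed over coordinates it gives $\widetilde K_n\log n$ rather than $K_n$. To remove the logarithm we instead use the standard small-ball bound for Gaussian measures, in the form of the Anderson-type shift inequality
\begin{equation}
\mathrm P(\lVert v-v_0\rVert\le\epsilon)\ge e^{-\lVert v_0\rVert_{\mathbb H}^2/2}\,\mathrm P(\lVert v\rVert\le\epsilon).
\end{equation}
The RKHS norm satisfies
\begin{equation}
\lVert v_{0,J}\rVert_{\mathbb H}^2\le\lambda_-^{-2}\sum_j(1+q_j)^{2\alpha+d}v_{0j}^2\lesssim J^d\sum_j(1+q_j)^{2\alpha}v_{0j}^2\lesssim K_n.
\end{equation}
For the centered term, the small-ball exponent of $\sum_j\sigma_j^2G_j^2$ at radius $\epsilon_n\asymp J^{-\alpha}$ is of order $J^d$; this follows from a chi-square bound applied blockwise, since the sum of the $\sigma_j^2$ over blocks with $q_j\gtrsim J$ is $\lesssim J^{-2\alpha}$, while each lower block contributes only $\log$-free costs of order its count. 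Integrating over $\nu$ gives \eqref{eq:density-square-root-local-mass}. We expect the main obstacle to be this logarithm-free centered small-ball estimate.
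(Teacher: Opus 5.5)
Your bias bound and Lipschitz step follow the paper's route: uniform projection bounds for $g_0$, then the inverse inequality $\lVert h\rVert_\infty\le\sqrt{D_J}\lVert h\rVert_2$ on $\mathcal V_J$ combined with $\sqrt{D_J}\,\epsilon_n\asymp J^{d/2-\alpha}\to0$. Your remark that $K_n\epsilon_n^2=O(1)$ ``fails in general'' is wrong, however. In fact $K_n\epsilon_n^2\asymp J^{d-2\alpha}\to0$ since $\alpha>d/2$; you may be thinking of $n\epsilon_n^2\asymp K_n$. So your ``instead'' argument is the same crude inverse inequality, and that inequality is all that is needed. Also, ``taking the pushforward'' does not make the image of the ball Borel. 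The paper uses that it is the continuous image of a compact ball, hence compact.

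The genuine gap is in the small-ball step, which is the heart of the proposition.

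\textbf{Why the direct bound does not fail.} You discard the direct coordinatewise bound because $\log(\sigma_j/h_n)\lesssim\log n$, but that uniform bound is far too crude. With $h_n=\delta\epsilon_n/\sqrt{\widetilde K_n}$ and $\epsilon_n\asymp J^{-\alpha}$, one has
\begin{equation*}
\log(\sigma_j/h_n)=(\alpha+d/2)\log\{(1+J)/(1+q_j)\}+O(1),
\end{equation*}
uniformly in $j$ and $\Lambda\in[\lambda_-,\lambda_+]$. This is $O(1)$ for the bulk of coordinates, because block $q$ contains $N_q\asymp q^{d-1}$ functions, and $\sum_{q\le J}N_q\log\{(1+J)/(1+q)\}=O(J^d)$. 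The quadratic terms are also $O(J^d)$: $\sum_j v_{0j}^2/\sigma_j^2\lesssim J^d$ by the Sobolev bound, and $h_n^2\sum_j\sigma_j^{-2}\lesssim\epsilon_n^2J^{2\alpha+d}$. So your first attempt already yields $e^{-C\widetilde K_n}$.

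\textbf{This is the paper's proof.} The paper does exactly this computation, with a Euclidean ball in place of a cube. The Gaussian normalizer contributes $(\alpha+d/2)\widetilde K\log J+O(\widetilde K)$, via $\sum_j\log(1+q_j)=\widetilde K\log J+O(\widetilde K)$. The ball volume contributes $\widetilde K\log(\delta\epsilon_n)-\tfrac{\widetilde K}{2}\log\widetilde K+O(\widetilde K)=-(\alpha+d/2)\widetilde K\log J+O(\widetilde K)$. The logarithms cancel.

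\textbf{Your replacement route is unfinished.} The Cameron--Martin shift plus a centered small-ball bound is sound in outline, and your RKHS estimate $\lVert v_{0,J}\rVert_{\mathbb H}^2\lesssim K_n$ is correct. But you leave the centered estimate unproved, and the heuristic you give for it is false. A low block $q\ll J$ does not cost $O(N_q)$; it costs about $N_q(\alpha+d/2)\log(J/q)$. Only the total is $O(J^d)$, and showing that requires precisely the summation you dismissed.

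\textbf{How to close the gap.} Either cite the classical small-deviation asymptotics
\begin{equation*}
-\log\mathrm P\Bigl(\textstyle\sum_j j^{-1-2\alpha/d}G_j^2\le\epsilon^2\Bigr)\asymp\epsilon^{-d/\alpha}.
\end{equation*}
Here $(1+q_j)^{-2\alpha-d}\lesssim j^{-1-2\alpha/d}$, truncating to $j\le D_J$ only increases the probability, and at $\epsilon\asymp J^{-\alpha}$ this gives order $J^d$. Or carry out the direct computation above, which makes the shift inequality unnecessary.
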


\begin{proof}
See Appendix~\ref{app:density-proofs}.
\end{proof}

\subsection{Stability and concentration of aggregate summaries}

Write $\mathbf i$ for the imaginary unit, so that $\mathbf i^2=-1$. For $\boldsymbol k\in\mathbb Z^d$, define the jump Fourier coefficient
\begin{equation}
  a_{\boldsymbol k}(p)=\int_{\mathcal X}e^{2\pi\mathbf i\boldsymbol k^\top u}p(u)\,\mathrm du.
\end{equation}
The characteristic-function identity for compound Poisson sums, as used in \citet{VanEsGugushviliSpreij2007}, gives
\begin{equation}
\begin{aligned}
  b_{\boldsymbol k}(\beta,p)
  &=\int_{\mathbb R^d}\mathbf 1\{x\ne0\}e^{2\pi\mathbf i\boldsymbol k^\top x}\mathrm Q_{\beta,p}(\mathrm dx)\\
  &=F_\beta\{a_{\boldsymbol k}(p)\},
  \qquad F_\beta(z)=e^{-\beta}(e^{\beta z}-1).
\end{aligned}
\label{eq:decompound-fourier-transform}
\end{equation}
Indeed, conditionally on $C_i=r$, the characteristic function of the total is $a_{\boldsymbol k}(p)^r$; summing over $r\ge1$ proves the identity. Periodic folding maps $x\in\mathbb R^d$ to its coordinatewise fractional parts in $[0,1)^d$. Under this map, the absolutely continuous part $q_{\beta,p}(x)\,\mathrm dx$ of $\mathrm Q_{\beta,p}$ has the subprobability density $r_{\beta,p}^\circ(u)=\sum_{a\in\mathbb Z^d}q_{\beta,p}(u+a)$, of total mass $1-e^{-\beta}$; see Lemma~\ref{lem:decompound-folding}. Since the trigonometric functions are periodic, the real coordinates of $s_J(\beta,p)$ are precisely the coefficients of $r_{\beta,p}^\circ$ in the nonconstant product trigonometric basis.

For $J\ge0$, let $\mathrm{Proj}_J$ be the orthogonal projection onto $\mathcal V_J$.

\begin{proposition}[Stability of aggregate Fourier summaries]
\label{prop:decompound-stability}
Let $0<\beta<\pi/2$ and put $c_\beta=\beta e^{-2\beta}\cos\beta>0$. For every $p,\widetilde p\in\mathcal D$ and integer $J\ge1$,
\begin{equation}
  c_\beta\lVert \mathrm{Proj}_{2J}(p-\widetilde p)\rVert_2
  \le\lVert s_J(\beta,p)-s_J(\beta,\widetilde p)\rVert_2
  \le\beta\lVert \mathrm{Proj}_{2J}(p-\widetilde p)\rVert_2.
  \label{eq:decompound-summary-stability}
\end{equation}
\end{proposition}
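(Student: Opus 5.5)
The plan is to reduce the vector inequality to a scalar inequality for the map $F_\beta$ of \eqref{eq:decompound-fourier-transform}, one Fourier frequency at a time, via Parseval's identity. Both sides of \eqref{eq:decompound-summary-stability} are Euclidean norms of coefficient vectors in the real tensor trigonometric basis over the frequency block $\{\boldsymbol k\in\mathbb Z^d:\max_r|k_r|\le 2J\}$.

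\textbf{Step 1: the right-hand side.} Within each block, the real product basis $\{\varphi_{\boldsymbol j}:q(\boldsymbol j)\le 2J\}$ and the complex exponentials $\{e^{2\pi\mathbf i\boldsymbol k^\top x}:\max_r|k_r|\le2J\}$ span the same space $\mathcal V_{2J}$. Both are orthonormal in $\mathcal L^2([0,1]^d)$, so they are related by a unitary change of basis. Hence
\[
\lVert\mathrm{Proj}_{2J}(p-\widetilde p)\rVert_2^2=\sum_{\max_r|k_r|\le 2J}|a_{\boldsymbol k}(p)-a_{\boldsymbol k}(\widetilde p)|^2 .
\]
The $\boldsymbol k=0$ term vanishes because both densities integrate to one.

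\textbf{Step 2: the middle term.} The real coordinates of $s_J(\beta,p)$ are the nonconstant coefficients of the folded subprobability density $r^\circ_{\beta,p}$; see the remark preceding the proposition and Lemma~\ref{lem:decompound-folding}. Because the basis functions are periodic, the complex-exponential coefficients of $r^\circ_{\beta,p}$ are exactly $b_{\boldsymbol k}(\beta,p)$ for $\boldsymbol k\neq0$; the indicator $\mathbf 1\{x\ne0\}$ removes the atom. The same unitary change of basis, now restricted to the nonconstant functions (which it preserves, since the constant function is common to both bases), gives
\[
\lVert s_J(\beta,p)-s_J(\beta,\widetilde p)\rVert_2^2=\sum_{\boldsymbol k\ne0,\ \max_r|k_r|\le2J}|F_\beta(a_{\boldsymbol k}(p))-F_\beta(a_{\boldsymbol k}(\widetilde p))|^2 .
\]
It therefore suffices to prove, for all $z,w$ in the closed unit disk (where every $a_{\boldsymbol k}$ lies),
\[
c_\beta|z-w|\le|F_\beta(z)-F_\beta(w)|\le\beta|z-w| .
\]

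\textbf{Step 3: the scalar inequality.} Integrate along the segment from $w$ to $z$:
\[
F_\beta(z)-F_\beta(w)=\beta e^{-\beta}(z-w)\int_0^1 e^{\beta\zeta_t}\,\mathrm dt,\qquad \zeta_t=w+t(z-w).
\]
The disk is convex, so $|\zeta_t|\le1$ for all $t$.
\begin{itemize}[label={}]
\item For the upper bound, $|e^{\beta\zeta_t}|=e^{\beta\operatorname{Re}\zeta_t}\le e^{\beta}$, which gives $|F_\beta(z)-F_\beta(w)|\le\beta|z-w|$.
\item For the lower bound, bound the modulus of the integral below by its real part. Since $\operatorname{Re}e^{\beta\zeta_t}=e^{\beta\operatorname{Re}\zeta_t}\cos(\beta\operatorname{Im}\zeta_t)$, with $|\beta\operatorname{Im}\zeta_t|\le\beta<\pi/2$ and $\operatorname{Re}\zeta_t\ge-1$, the real part is at least $e^{-\beta}\cos\beta>0$. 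Hence $|F_\beta(z)-F_\beta(w)|\ge\beta e^{-2\beta}\cos\beta\,|z-w|=c_\beta|z-w|$.
\end{itemize}
This is exactly where the restriction $\beta<\pi/2$ enters. Summing the squared scalar bounds over $\boldsymbol k$ and applying Steps 1–2 yields \eqref{eq:decompound-summary-stability}.

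\textbf{Main obstacle.} The analytic core in Step 3 is short. The step needing the most care is the bookkeeping in Steps 1–2: checking that the index set $\{2,\dots,D_{2J}\}$ in the block ordering corresponds exactly to the nonzero frequencies with $\max_r|k_r|\le2J$, and that the real/complex change of basis is unitary block by block, so that no constant factors appear. One must also check that folding the absolutely continuous part of $\mathrm Q_{\beta,p}$ leaves the trigonometric integrals unchanged, using periodicity together with Tonelli's theorem for the series defining $r^\circ_{\beta,p}$.
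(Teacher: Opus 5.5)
Your proposal is correct and follows the paper's proof essentially step for step. The unitary real-to-complex change of basis on $\mathcal V_{2J}$ reduces both norms to sums over $0<\lVert\boldsymbol k\rVert_\infty\le 2J$, the identity $b_{\boldsymbol k}=F_\beta\{a_{\boldsymbol k}(p)\}$ converts the middle term, and integrating $F_\beta'$ along the segment with $\operatorname{Re}e^{\beta\zeta}\ge e^{-\beta}\cos\beta$ and $|e^{\beta\zeta}|\le e^{\beta}$ gives the two-sided scalar bound.

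The differences are cosmetic. You drop the constant mode by restricting the unitary to nonconstant functions, whereas the paper keeps it and notes that the constant coefficients cancel. You also route through the folded density, which is not needed: Lemma~\ref{lem:decompound-folding} is stated only for bounded $p$, while the coefficient identity holds directly for the measure $\mathbf 1\{x\ne0\}\mathrm Q_{\beta,p}$ tested against periodic functions, for every $p\in\mathcal D$.
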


\begin{proof}
See Appendix~\ref{app:density-proofs}.
\end{proof}

\begin{proposition}[Stability with unknown count intensity]
\label{prop:decompound-joint-stability}
There is a constant $C_{\mathrm{st}}\ge1$, depending only on $\mathcal I$ and $M$, such that, for every $\beta,\beta_0\in \mathcal I$, $p,p_0\in\mathcal D$ with $p_0\le M$ almost everywhere, and integer $J\ge1$,
\begin{equation}
\begin{aligned}
 C_{\mathrm{st}}^{-1}
 \left(|\beta-\beta_0|+\lVert\mathrm{Proj}_{2J}(p-p_0)\rVert_2\right)
 &\le \lVert t_J(\beta,p)-t_J(\beta_0,p_0)\rVert_2\\
 &\le C_{\mathrm{st}}
 \left(|\beta-\beta_0|+\lVert\mathrm{Proj}_{2J}(p-p_0)\rVert_2\right).
\end{aligned}
\label{eq:decompound-joint-stability}
\end{equation}
\end{proposition}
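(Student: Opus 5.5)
Write $t_J(\beta,p)-t_J(\beta_0,p_0)=(e^{-\beta}-e^{-\beta_0},\,s_J(\beta,p)-s_J(\beta_0,p_0))$ and split the trigonometric part through the intermediate point $(\beta,p_0)$:
\begin{equation}
  s_J(\beta,p)-s_J(\beta_0,p_0)
  =\{s_J(\beta,p)-s_J(\beta,p_0)\}+\{s_J(\beta,p_0)-s_J(\beta_0,p_0)\}.
\end{equation}
Proposition~\ref{prop:decompound-stability}, applied at the common intensity $\beta\in\mathcal I$, controls the first bracket from above and below by $\lVert\mathrm{Proj}_{2J}(p-p_0)\rVert_2$, with constants $c_\beta\ge\min_{\mathcal I}c_\beta>0$ and $\beta\le\overline\beta$. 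The zero-count coordinate satisfies $e^{-\overline\beta}|\beta-\beta_0|\le|e^{-\beta}-e^{-\beta_0}|\le|\beta-\beta_0|$ by the mean value theorem.

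For the upper bound it remains to show $\lVert s_J(\beta,p_0)-s_J(\beta_0,p_0)\rVert_2\lesssim|\beta-\beta_0|$ uniformly in $J$. By \eqref{eq:decompound-fourier-transform}, the complex coordinates of the intensity difference are $F_\beta(a_{\boldsymbol k})-F_{\beta_0}(a_{\boldsymbol k})$ with $a_{\boldsymbol k}=a_{\boldsymbol k}(p_0)$. Since $|a_{\boldsymbol k}|\le1$ and $F_\beta(0)=0$, we have
\begin{equation}
  \partial_\beta F_\beta(z)=-F_\beta(z)+ze^{-\beta}e^{\beta z},
\end{equation}
which is $O(|z|)$ uniformly for $|z|\le1$ and $\beta\in\mathcal I$. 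Hence each coordinate is bounded by $C|\beta-\beta_0||a_{\boldsymbol k}|$. The real coordinates are linear combinations of real and imaginary parts of these Fourier coordinates at $\pm\boldsymbol k$ with bounded coefficients (the tensor sine/cosine basis versus exponentials, orthonormally related within each frequency block). Summing squares then gives a bound of the form $C|\beta-\beta_0|^2\sum_{\boldsymbol k}|a_{\boldsymbol k}(p_0)|^2\le C|\beta-\beta_0|^2\lVert p_0\rVert_2^2\le C M|\beta-\beta_0|^2$ by Parseval, using $\lVert p_0\rVert_2^2\le M\int p_0=M$. This is where the dependence on $M$ enters. The upper bound follows by the triangle inequality.

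For the lower bound, write $\Delta_\beta=|\beta-\beta_0|$, $\Delta_p=\lVert\mathrm{Proj}_{2J}(p-p_0)\rVert_2$, and $L=\lVert t_J(\beta,p)-t_J(\beta_0,p_0)\rVert_2$. The first coordinate gives $L\ge e^{-\overline\beta}\Delta_\beta$. The reverse triangle inequality with the two brackets gives $L\ge c\Delta_p-C'\Delta_\beta$. Taking a convex combination, $L\ge\lambda e^{-\overline\beta}\Delta_\beta+(1-\lambda)(c\Delta_p-C'\Delta_\beta)$, and choosing $\lambda$ close enough to one that $\lambda e^{-\overline\beta}-(1-\lambda)C'>0$ yields $L\gtrsim\Delta_\beta+\Delta_p$. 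Thus the zero-count coordinate absorbs the intensity cross-term, which is exactly why the augmentation is needed.

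The main obstacle is the uniform-in-$J$ Lipschitz bound in $\beta$ for the trigonometric summaries at fixed $p_0$. This requires carefully matching the real tensor basis coordinates of $s_J$ with the complex Fourier coefficients $b_{\boldsymbol k}$ (including the constant-factor products in $d$ dimensions), and confirming that the Parseval bound for $p_0$ uses only $p_0\le M$. The nonnegativity of the frequency-zero term is irrelevant since constants are excluded.
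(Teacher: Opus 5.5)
Your proposal is correct and follows essentially the same route as the paper. Both arguments split through $(\beta,p_0)$ and apply Proposition~\ref{prop:decompound-stability} at the common intensity. Both bound $\partial_\beta F_\beta(z)$ by a constant multiple of $|z|$ and sum via Parseval using only $p_0\le M$. Both then let the zero-count coordinate absorb the intensity cross-term. The differences are cosmetic: you use a convex combination where the paper substitutes $|\beta-\beta_0|\le e^{\overline\beta}\lVert t_J(\beta,p)-t_J(\beta_0,p_0)\rVert_2$ directly, and you bound by $\lVert p_0\rVert_2^2\le M$ rather than $\lVert p_0-1\rVert_2^2\le M-1$.
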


\begin{proof}
See Appendix~\ref{app:density-proofs}.
\end{proof}

The constant in \eqref{eq:decompound-joint-stability} is independent of the summary dimension. Only $p_0$, not the candidate density $p$, is required to satisfy the fixed upper bound $M$. The intensity restriction is sufficient for the inverse bounds in \eqref{eq:decompound-summary-stability} and \eqref{eq:decompound-joint-stability}; it is not an identifiability threshold for the full decompounding model. The collection $(t_J(\beta,p))_{J\ge1}$ identifies $(\beta,p)$: the zero-count probability determines $\beta$, and Proposition~\ref{prop:decompound-stability} then determines every Fourier coefficient of $p$. A fixed finite summary need not be sufficient, and the results below concern contraction around the latent density rather than approximation of the full likelihood posterior.

\begin{proposition}[Observed and synthetic aggregate-summary concentration]
\label{prop:density-concentration}
There are constants $B_0,c_{\mathrm{coef}}>0$ such that, for every fixed $B\ge B_0$ and all sufficiently large $n$,
\begin{align}
  \sup_{\substack{\beta_0\in \mathcal I\\p_0\in\mathcal P_\alpha(R,m,M)}}
  \mathrm P_{\beta_0,p_0}^{(n)}\left\{\lVert S_n(X)-t_{J_n}(\beta_0,p_0)\rVert_2>B\sqrt{K_n/n}\right\}
  &\le e^{-c_{\mathrm{coef}}B^2K_n},
  \label{eq:density-observed-concentration}\\
  \sup_{\substack{\beta\in \mathcal I\\p\in\operatorname{supp}(\Pi_n)}}
  \mathrm P_{\beta,p}^{(m_n)}\left\{\lVert S_{m_n}(Z)-t_{J_n}(\beta,p)\rVert_2>B\sqrt{K_n/n}\right\}
  &\le e^{-c_{\mathrm{coef}}B^2K_n}.
  \label{eq:density-synthetic-concentration}
\end{align}
\end{proposition}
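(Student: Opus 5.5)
Both bounds concern a vector mean of iid bounded random vectors in $\mathbb R^{K_n+1}$. The natural tool is a dimension-free Hilbert-space concentration inequality: a vector Bernstein or McDiarmid bound applied to the Euclidean norm. We have
\begin{equation}
  \lVert\xi_{J_n}(x)\rVert_2^2=\mathbf 1\{x=0\}+\mathbf 1\{x\ne0\}\lVert\psi_{2J_n}(x)\rVert_2^2\le 1+2^dD_{2J_n}\lesssim K_n,
\end{equation}
since each $|\varphi_j|\le 2^{d/2}$. Hence each summand $\xi_{J_n}(X_i)$ satisfies $\lVert\xi_{J_n}(X_i)\rVert_2\le b_n$ with $b_n\asymp\sqrt{K_n}$, and $\mathrm E\lVert\xi_{J_n}(X_i)-t_{J_n}\rVert_2^2\le \sigma_n^2$ for some $\sigma_n^2\lesssim K_n$.

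For the observed summary, McDiarmid's inequality applies to the function $\lVert S_n(X)-t_{J_n}(\beta_0,p_0)\rVert_2$. Its bounded differences are $2b_n/n$, and by Jensen its mean is at most $\sqrt{\sigma_n^2/n}\lesssim\sqrt{K_n/n}$. This gives
\begin{equation}
  \mathrm P\bigl\{\lVert S_n-t\rVert_2>C_1\sqrt{K_n/n}+u\bigr\}\le \exp\{-nu^2/(2b_n^2)\}.
\end{equation}
Taking $u=(B-C_1)\sqrt{K_n/n}$ makes the exponent of order $(B-C_1)^2K_n/K_n=(B-C_1)^2$. That is not of order $K_n$, so this crude route fails. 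The exponent has to come from a variance-sensitive bound, with the weak variance $\sup_{\lVert u\rVert=1}\mathrm E\langle u,\xi-t\rangle^2$ in place of $b_n^2$. I would therefore use the Hilbert-space Bernstein inequality in Talagrand/Bousquet form:
\begin{equation}
  \mathrm P\bigl\{\lVert S_n-t\rVert_2\ge \mathrm E\lVert S_n-t\rVert_2+\sqrt{2v x/n}+b_nx/(3n)\bigr\}\le e^{-x},
\end{equation}
where $v$ is the weak variance plus $2b_n\mathrm E\lVert S_n-t\rVert_2$.

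The key step is to bound the weak variance by a constant. For a unit vector $u$,
\begin{equation}
  \mathrm E\langle u,\xi(X_1)\rangle^2\le 2u_0^2+2\int\Bigl(\sum_j u_j\varphi_j\Bigr)^2 r^\circ_{\beta_0,p_0}(y)\,\mathrm dy
\end{equation}
by periodic folding (Lemma~\ref{lem:decompound-folding}). This is at most $2+2\lVert r^\circ\rVert_\infty$ by orthonormality. I expect the folded density $r^\circ_{\beta,p}$ to be bounded by $\sum_{r\ge1}e^{-\beta}\beta^r/r!\,\lVert p\rVert_\infty\le\lVert p\rVert_\infty$, because the folding of $p^{(*r)}$ is a periodic convolution of the folded $p$ and so has sup-norm at most $\lVert p\rVert_\infty$. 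For $p_0\le M$ this gives $v\lesssim 1+b_n\sqrt{K_n/n}$, which is $O(1)$ once $K_n^2/n\to0$. Since $K_n\asymp n^{d/(2\alpha+d)}$ and $\alpha>d/2$, this condition holds. Taking $x=cB^2K_n$ then gives deviations of order $B\sqrt{K_n/n}+B^2K_n^{3/2}/n$. The last term is $o(\sqrt{K_n/n})$ times $B^2$, and it is absorbed for each fixed $B$ and large $n$. This proves \eqref{eq:density-observed-concentration}.

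For the synthetic summary the same argument runs with $m_n$ samples, except that now $\lVert p\rVert_\infty\le D_{J_n}$ by Proposition~\ref{prop:density-simulator}. The weak variance is therefore $\lesssim D_{J_n}\asymp K_n$, and this is the main obstacle: there is no uniform density bound over the prior support. It is offset by the enlarged sample size $m_n\ge nD_{J_n}$. The Gaussian term becomes $\sqrt{K_n x/m_n}\le\sqrt{x/n}$, which is $\lesssim B\sqrt{K_n/n}$ at $x=cB^2K_n$. The mean term is $\sqrt{K_n/m_n}$, which is even smaller. The Bernstein linear term is $b_nx/m_n\lesssim B^2K_n^{3/2}/(nK_n)$, which is negligible. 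Matching constants then yields \eqref{eq:density-synthetic-concentration} with a common $c_{\mathrm{coef}}$, after taking $B_0$ large enough to dominate the mean term in both cases.
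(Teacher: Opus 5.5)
Your proof is correct, but it uses a different concentration tool from the paper. The paper never controls the norm of the sum directly. It proves Lemma~\ref{lem:bernstein-net}, which applies scalar Bernstein to $u^\top\sum_iW_i$ for each $u$ in a $1/2$-net of $\mathbb S^{K}$ and then takes a union bound. This produces a prefactor $2\cdot5^{K+1}$, which is absorbed into $e^{-cB^2K}$ by taking $B_0$ large. The resulting exponents are $cB^2K/(M+2\sqrt2BK/\sqrt n)$ in the observed case and $cB^2K/(1+2\sqrt2BK/(D_J\sqrt n))$ in the synthetic case, and $K_n/\sqrt n\to0$ finishes both.

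You instead apply the Talagrand--Bousquet inequality to $\lVert\sum_iW_i\rVert_2=\sup_{\lVert u\rVert_2\le1}u^\top\sum_iW_i$. The deviation about the mean is then dimension-free, and the dimension enters only through $\mathrm E\lVert S_n-t\rVert_2\le\sqrt{(K_n+1)/n}$. That bound holds uniformly in $p$ because $\sum_{j\ge2}\varphi_j^2\equiv K$. Your $B_0$ must therefore dominate this mean term rather than the net entropy.

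The substantive inputs are the same in both routes:
\begin{itemize}
\item Your weak-variance bound via Lemma~\ref{lem:decompound-folding} is exactly \eqref{eq:decompound-directional-variance}.
\item Your $O(\sqrt{K_n})$ almost-sure bound plays the role of $H\sqrt K$.
\item $K_n^2/n\to0$, which follows from $\alpha>d/2$, kills the linear Bernstein term.
\item $m_n\ge nD_{J_n}$ offsets the envelope variance $p\le D_{J_n}$.
\end{itemize}

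The paper's route is more elementary: it needs only scalar Bernstein and reuses the net device of Lemma~\ref{lem:directional-subgaussian}. Yours needs a heavier empirical-process inequality, but it makes clearer why a threshold $B_0$ is needed at all: $\sqrt{K_n/n}$ is already the typical size of the summary error.
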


\begin{proof}
See Appendix~\ref{app:density-proofs}.
\end{proof}

\begin{corollary}[Uniform synthetic aggregate-summary small ball]
\label{cor:density-synthetic-small-ball}
For the constant $B_0$ in Proposition~\ref{prop:density-concentration}, there is $c_1>0$ such that, for all sufficiently large $n$,
\begin{equation}
  \inf_{\substack{\beta\in \mathcal I\\p\in\operatorname{supp}(\Pi_n)}}
  \mathrm P_{\beta,p}^{(m_n)}\left\{\lVert S_{m_n}(Z)-t_{J_n}(\beta,p)\rVert_2\le B_0\sqrt{K_n/n}\right\}
  \ge c_1.
  \label{eq:density-synthetic-small-ball}
\end{equation}
\end{corollary}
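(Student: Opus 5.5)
The corollary is a direct complement of \eqref{eq:density-synthetic-concentration} in Proposition~\ref{prop:density-concentration}, taken at the threshold $B=B_0$. For every $\beta\in\mathcal I$ and $p\in\operatorname{supp}(\Pi_n)$, the event in \eqref{eq:density-synthetic-small-ball} is exactly the complement of the event bounded in \eqref{eq:density-synthetic-concentration}. So for all sufficiently large $n$,
\begin{equation}
  \mathrm P_{\beta,p}^{(m_n)}\left\{\lVert S_{m_n}(Z)-t_{J_n}(\beta,p)\rVert_2\le B_0\sqrt{K_n/n}\right\}
  \ge 1-e^{-c_{\mathrm{coef}}B_0^2K_n},
\end{equation}
uniformly over the stated parameter set. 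This holds because the supremum in \eqref{eq:density-synthetic-concentration} controls every element of the set at once.

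It remains to produce a constant. Since $K_n=D_{2J_n}-1\to\infty$ and $c_{\mathrm{coef}}B_0^2>0$ is fixed, there is $n_1$ with $e^{-c_{\mathrm{coef}}B_0^2K_n}\le 1/2$ for $n\ge n_1$. Taking $c_1=1/2$ and $n\ge\max\{n_1,n_0\}$, where $n_0$ is the threshold from Proposition~\ref{prop:density-concentration} at $B=B_0$, gives \eqref{eq:density-synthetic-small-ball} with the infimum. In fact the bound is $1-o(1)$, which is stronger than needed.

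The only point to check is that Proposition~\ref{prop:density-concentration} applies at $B=B_0$ itself. It does, since it is stated for every fixed $B\ge B_0$, and its ``sufficiently large $n$'' threshold does not depend on $(\beta,p)$ because the bound is a supremum. No step is a real obstacle; all the substance (uniformity over the envelope $p\le D_{J_n}$ via the enlarged sample size $m_n$) is already contained in \eqref{eq:density-synthetic-concentration}.
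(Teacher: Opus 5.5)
Your proposal is correct and is essentially the paper's own proof. Like the paper, you apply the synthetic concentration bound \eqref{eq:density-synthetic-concentration} at $B=B_0$, take complements, and use $K_n\to\infty$ to make the tail at most $1/2$ uniformly, so that $c_1=1/2$.
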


\begin{proof}
See Appendix~\ref{app:density-proofs}.
\end{proof}

\subsection{Ideal ABC posterior upper bound}

\begin{theorem}[Decompounding ideal ABC contraction]
\label{thm:density-ideal}
For the compound Poisson model, prior, summaries, and sample sizes specified above, with $\mathcal I$ as in \eqref{eq:decompound-parameter-space} and $\alpha>d/2$, there is $A_0<\infty$ such that, for every $A>A_0$, there exists $L_0<\infty$ for which, for every fixed $L\ge L_0$,
\begin{equation}
  \sup_{\substack{\beta_0\in \mathcal I\\p_0\in\mathcal P_\alpha(R,m,M)}}\mathrm E_{\beta_0,p_0}^{(n)}
  \Pi_n^{\mathrm{ABC}}
  \left\{p:\lVert p-p_0\rVert_2>L\rho_{n,\alpha,d}\mid X\right\}\to0.
  \label{eq:density-contract}
\end{equation}
\end{theorem}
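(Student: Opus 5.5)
We will verify Condition~\ref{ass:meta} for the joint parameter $(\beta,p)\in\Theta$ with the product metric $d_\Theta$, prior $\widetilde\Pi_n$, $K_n=D_{2J_n}-1$, and $\epsilon_n$ as in \eqref{eq:density-rate}. We then apply Theorem~\ref{thm:meta-ideal}. Since $\lVert p-p_0\rVert_2\le d_\Theta((\beta,p),(\beta_0,p_0))$ and $\epsilon_n\asymp\rho_{n,\alpha,d}$, contraction of the joint posterior at rate $M\epsilon_n$ gives \eqref{eq:density-contract} for the density marginal \eqref{eq:decompound-density-marginals}, with $L_0$ a constant multiple of $M_0$. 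The event is
\[
\mathcal E_n=\{\lVert S_n(X)-t_{J_n}(\beta_0,p_0)\rVert_2\le B_0\sqrt{K_n/n}\}.
\]
Condition (B1) is \eqref{eq:density-observed-concentration} with $c=c_{\mathrm{coef}}B_0^2$. For the sieve we take $\mathcal G_n=\mathcal I\times\operatorname{supp}(\Pi_n)$, so (B5) is trivial.

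For the local set we take $\mathcal B_n=\{(\beta,p_v):|\beta-\beta_0|\le\delta\epsilon_n,\ \lVert v-v_{0,J_n}\rVert_2\le\delta\epsilon_n\}$. By \eqref{eq:decompound-intensity-prior} the intensity interval has prior mass at least $w_-\delta\epsilon_n\ge e^{-K_n}$, since its length is at least of order $\epsilon_n$ even when $\beta_0$ is an endpoint. Combined with \eqref{eq:density-square-root-local-mass}, this gives (B2). By Proposition~\ref{prop:density-local-mass} these points lie within $C\epsilon_n$ of $(\beta_0,p_0)$, which is the first part of (B3).

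For the ABC lower bound in (B3), we use the upper half of Proposition~\ref{prop:decompound-joint-stability}, which requires only $p_0\le M$:
\[
\lVert t_{J_n}(\beta,p)-t_{J_n}(\beta_0,p_0)\rVert_2\le C_{\mathrm{st}}(|\beta-\beta_0|+\lVert p-p_0\rVert_2)\le C'\epsilon_n .
\]
Suppose the synthetic event of Corollary~\ref{cor:density-synthetic-small-ball} holds. On $\mathcal E_n$ the triangle inequality then gives $\lVert S_{m_n}(Z)-S_n(X)\rVert_2\le(2B_0+C')\epsilon_n$. Choosing $A_0=2B_0+C'$, we get $L_n^{\mathrm{ABC}}\ge c_1\ge e^{-C_LK_n}$. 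A uniform constant acceptance probability is far stronger than what (B3) requires.

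The separation condition (B4) is the heart of the argument. Take $(\beta,p)\in\mathcal G_n$ with $d_\Theta>M\epsilon_n$. Then $|\beta-\beta_0|+\lVert p-p_0\rVert_2>M\epsilon_n$. Since $p\in\mathcal V_{2J_n}$, we can split
\[
\lVert p-p_0\rVert_2\le\lVert\mathrm{Proj}_{2J_n}(p-p_0)\rVert_2+\lVert p_0-\mathrm{Proj}_{2J_n}p_0\rVert_2 .
\]
The Sobolev bound makes the second term at most $R(2J_n)^{-\alpha}\le R\epsilon_n$. The lower half of Proposition~\ref{prop:decompound-joint-stability} then gives
\[
\lVert t_{J_n}(\beta,p)-t_{J_n}(\beta_0,p_0)\rVert_2\ge C_{\mathrm{st}}^{-1}(M-R)\epsilon_n .
\]
This is the step I expect to be the main obstacle, because it needs the inverse bound to hold uniformly over the whole unbounded-envelope support. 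That is exactly why the dimension-free constant and the restriction $\overline\beta<\pi/2$ matter.

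Now use the separation display \eqref{eq:meta-separation-explanation} with $\sqrt{K_n/n}\le\epsilon_n$. Acceptance forces
\[
\lVert S_{m_n}(Z)-t_{J_n}(\beta,p)\rVert_2\ge\{C_{\mathrm{st}}^{-1}(M-R)-A-B_0\}\epsilon_n .
\]
Since $\epsilon_n\le c_2\sqrt{K_n/n}$, this deviation is at least $B\sqrt{K_n/n}$ with $B=c_2^{-1}\{C_{\mathrm{st}}^{-1}(M-R)-A-B_0\}$. Applying \eqref{eq:density-synthetic-concentration} with this $B$, which requires $B\ge B_0$ and hence $M$ large, bounds the acceptance probability uniformly over the sieve by $e^{-c_{\mathrm{coef}}B^2K_n}$. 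This has the form $\exp\{-C_D(M-C_S)^2K_n\}$ with $C_S=C_{\mathrm{st}}(A+B_0+c_2B_0)+R$, giving (B4).

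Theorem~\ref{thm:meta-ideal} then yields joint contraction, uniformly in $(\beta_0,p_0)$, and \eqref{eq:density-contract} follows by projecting onto the density marginal.
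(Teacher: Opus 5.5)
Your proposal is correct and follows the paper's proof essentially step for step. It uses the same concentration event, a product local set built from the coefficient ball, the sieve $\mathcal I\times\operatorname{supp}(\Pi_n)$, the two-sided bound of Proposition~\ref{prop:decompound-joint-stability} together with the $R\epsilon_n$ projection bias, and synthetic concentration for (B4). Two small repairs are needed. In (B4), passing to the $\sqrt{K_n/n}$ scale needs only $\epsilon_n\ge\sqrt{K_n/n}$; the inequality $\epsilon_n\le c_2\sqrt{K_n/n}$ you cite points the wrong way, although your $B$ is still valid because $c_2\ge1$. In (B3), you should check, as the paper does, that every $p_v$ lies in $\operatorname{supp}(\Pi_n)$, so that Corollary~\ref{cor:density-synthetic-small-ball} applies.
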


\begin{proof}
See Appendix~\ref{app:density-proofs}.
\end{proof}

\begin{proposition}[Jump-density posterior second moment bound]
\label{prop:density-second-moment}
Under the conditions of Theorem~\ref{thm:density-ideal}, there exists $C<\infty$ such that
\begin{equation}
  \sup_{\substack{\beta_0\in \mathcal I\\p_0\in\mathcal P_\alpha(R,m,M)}}\mathrm E_{\beta_0,p_0}^{(n)}
  \int_{\mathcal D}\lVert p-p_0\rVert_2^2\Pi_n^{\mathrm{ABC}}(\mathrm dp\mid X)
  \le Cn^{-2\alpha/(2\alpha+d)}.
  \label{eq:density-second}
\end{equation}
\end{proposition}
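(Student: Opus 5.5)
We bound the posterior second moment by splitting $\lVert p-p_0\rVert_2^2$ over shells, using the contraction machinery behind Theorem~\ref{thm:density-ideal} together with the fact that the prior support is bounded. Every $p\in\operatorname{supp}(\Pi_n)$ satisfies $0\le p\le D_{J_n}$, so $\lVert p\rVert_2^2\le\int p\cdot D_{J_n}=D_{J_n}$. Since $p_0\le M$, this gives
\[
\lVert p-p_0\rVert_2^2\le 2D_{J_n}+2M^2\lesssim K_n.
\]
The fallback density $p^\dagger=1$ is also at bounded distance from $p_0$. Hence the posterior second moment is at most a constant times $K_n$ deterministically.

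Write $\mathcal E_n$ for the event used in the proof of Theorem~\ref{thm:density-ideal}. It is the intersection of the observed-summary concentration event from \eqref{eq:density-observed-concentration} (at a fixed $B\ge B_0$) with the event $D_n(X)\ge e^{-\kappa K_n}$, which is supplied by Corollary~\ref{cor:meta-denom} once Condition~\ref{ass:meta} has been verified there. Then $\mathrm P(\mathcal E_n^c)\le e^{-c'K_n}$. On $\mathcal E_n^c$ we use the crude bound, so this event contributes $O(K_ne^{-c'K_n})=o(\epsilon_n^2)$.

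On $\mathcal E_n$ we use a layer-cake decomposition:
\[
\int\lVert p-p_0\rVert_2^2\,\Pi_n^{\mathrm{ABC}}(\mathrm dp\mid X)\le L_0^2\epsilon_n^2+\sum_{k\ge0}(2^{k+1}L_0\epsilon_n)^2\,\Pi_n^{\mathrm{ABC}}\{\lVert p-p_0\rVert_2>2^kL_0\epsilon_n\mid X\}.
\]
The sum is truncated at $2^kL_0\epsilon_n\lesssim\sqrt{K_n}$, because no prior mass lies beyond that radius. Each tail term needs a bound of the form $e^{-\gamma 4^k K_n}$, not merely a fixed $e^{-\gamma_MK_n}$. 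To get this, we revisit (B4) in the decompounding setting with the dependence on $M$ kept explicit.

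For $(\beta,p)$ in the support with $\lVert p-p_0\rVert_2>M\epsilon_n$, the projection bias satisfies
\[
\lVert p_0-\mathrm{Proj}_{2J_n}p_0\rVert_2\lesssim J_n^{-\alpha},
\]
and $p\in\mathcal V_{2J_n}$. Therefore $\lVert\mathrm{Proj}_{2J_n}(p-p_0)\rVert_2\ge M\epsilon_n-CJ_n^{-\alpha}$. Proposition~\ref{prop:decompound-joint-stability} then gives
\[
\lVert t_{J_n}(\beta,p)-t_{J_n}(\beta_0,p_0)\rVert_2\ge C_{\mathrm{st}}^{-1}(M-C)\epsilon_n.
\]
By the triangle argument \eqref{eq:meta-separation-explanation}, acceptance forces $\lVert S_{m_n}(Z)-t_{J_n}(\beta,p)\rVert_2\ge (C_{\mathrm{st}}^{-1}(M-C)-A-B)\epsilon_n$. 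By \eqref{eq:density-synthetic-concentration}, applied with $B$ replaced by a multiple of $M$ (valid for each fixed large value, and the constant $c_{\mathrm{coef}}$ is uniform in $B\ge B_0$), this has probability at most $e^{-cM^2K_n}$. Dividing by $D_n(X)\ge e^{-\kappa K_n}$ yields
\[
\Pi_n^{\mathrm{ABC}}\{\lVert p-p_0\rVert_2>M\epsilon_n\mid X\}\le e^{(\kappa-cM^2)K_n}.
\]
With $M=2^kL_0$ and $L_0$ large, the layer-cake sum is bounded by
\[
\epsilon_n^2\sum_k4^{k+1}L_0^2e^{-c4^kL_0^2K_n/2}=O(\epsilon_n^2e^{-cK_n}).
\]
Here (B5) is not needed, since the sieve can be taken to be the whole support. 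Together with $\epsilon_n\asymp n^{-\alpha/(2\alpha+d)}$, this gives \eqref{eq:density-second}.

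The main obstacle is uniformity in $M$ in the concentration bound \eqref{eq:density-synthetic-concentration}. As stated, it holds for each fixed $B\ge B_0$ only for $n\ge n_0(B)$, whereas we need it along the growing sequence $B\asymp 2^k$ up to $\sqrt{K_n}/\epsilon_n$. I would first check whether the proof of Proposition~\ref{prop:density-concentration} gives a sub-Gaussian norm tail that is uniform in $B$; this is plausible, via a vector Bernstein inequality for bounded summands $\lVert\xi_{J_n}\rVert_2\lesssim\sqrt{K_n}$ with $m_n\ge nD_{J_n}$. If it does not, a fallback is to handle only finitely many fixed shells with the fixed-$M$ bound, and cover all radii beyond a large fixed $M_1\epsilon_n$ with one crude bound. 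That route requires $e^{(\kappa-cM_1^2)K_n}K_n=O(\epsilon_n^2)$, which holds since $K_n$ grows polynomially in $n$.
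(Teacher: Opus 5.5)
Your argument is correct, and the ``fallback'' in your last paragraph is exactly the paper's proof. The paper splits once at a fixed radius $L\epsilon_n$ and bounds the integrand by $L^2\epsilon_n^2$ inside that radius. Outside it, it uses the crude envelope $\lVert p-p_0\rVert_2^2\le 2D_{J_n}+2M\le CK_n$ together with the fixed-$L$ tail bound $2e^{-\gamma_LK_n}$ from \eqref{eq:meta-ideal-on-event}. Since $K_n$ grows polynomially in $n$, $CK_ne^{-\gamma_LK_n}=O(\epsilon_n^2)$. The complement of $\mathcal E_n$ is handled with the same crude bound, as you do.

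Your primary dyadic layer-cake route is therefore unnecessary, and the ``main obstacle'' you identify is not one. Exponential smallness in $K_n$ at a single fixed radius already absorbs the polynomial factor $K_n$, so no uniformity in $M$ is needed.

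The shell route would also work if you wanted it. In the proof of Proposition~\ref{prop:density-concentration}, the synthetic Bernstein exponent is $cB^2K_n/(1+CB/\sqrt n)$, which is $\gtrsim B^2K_n$ uniformly for $B_0\le B\lesssim\sqrt n$. For larger $B$ the probability is zero, because $\lVert S_{m_n}(Z)-t_{J_n}(\beta,p)\rVert_2\le 2\sqrt{K_n+1}$. That route only buys a super-exponentially small tail contribution, which the rate statement never uses.

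A minor point: the paper's $\mathcal E_n$ is just the observed-summary concentration event. On that event, $D_n(X)\ge e^{-(C_\Pi+C_L)K_n}$ already holds by (B2)--(B3), so intersecting with $\{D_n(X)\ge e^{-\kappa K_n}\}$ is redundant but harmless.
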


\begin{proof}
See Appendix~\ref{app:density-proofs}.
\end{proof}

Define the ideal and empirical posterior mean jump densities by
\begin{equation}
  \widehat p_n^{\mathrm{ABC}}=\int_{\mathcal D}p\,\Pi_n^{\mathrm{ABC}}(\mathrm dp\mid X),
  \qquad
  \widehat p_{n,N}^{\mathrm{ABC}}=\int_{\mathcal D}p\,\widehat\Pi_{n,N}^{\mathrm{ABC}}(\mathrm dp\mid X).
  \label{eq:density-posterior-means}
\end{equation}

\begin{corollary}[Jump-density posterior mean risk]
\label{cor:density-mean-risk}
Under the conditions of Proposition~\ref{prop:density-second-moment}, the ideal ABC posterior mean density is well defined and satisfies
\begin{equation}
  \sup_{\substack{\beta_0\in \mathcal I\\p_0\in\mathcal P_\alpha(R,m,M)}}\mathrm E_{\beta_0,p_0}^{(n)}
  \lVert\widehat p_n^{\mathrm{ABC}}-p_0\rVert_2^2
  \le Cn^{-2\alpha/(2\alpha+d)}.
  \label{eq:density-mean-risk}
\end{equation}
\end{corollary}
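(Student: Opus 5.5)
The plan is to obtain the mean-risk bound from Proposition~\ref{prop:density-second-moment} by Jensen's inequality for Bochner integrals in the Hilbert space $\mathcal L^2(\mathcal X)$. The argument has two parts: showing the posterior mean is well defined, and transferring the second-moment bound to the mean.

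\textbf{Well-definedness.} On the event $D_n(X)>0$, the measure $\Pi_n^{\mathrm{ABC}}(\cdot\mid X)$ is absolutely continuous with respect to $\Pi_n$. It is therefore concentrated on $\operatorname{supp}(\Pi_n)$, since $\Pi_n\{\operatorname{supp}(\Pi_n)^c\}=0$ by separability of $\mathcal L^2$. Every element of $\operatorname{supp}(\Pi_n)$ lies in $\mathcal V_{2J_n}$ and satisfies $0\le p\le D_{J_n}$, so
\begin{equation}
  \lVert p\rVert_2\le D_{J_n}.
\end{equation}
The identity map is Borel measurable and bounded on this set. Since $\mathcal L^2$ is separable it is strongly measurable, hence Bochner integrable, and $\widehat p_n^{\mathrm{ABC}}$ exists. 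On the event $D_n(X)=0$ the posterior is the point mass at the fallback density $p^\dagger=1$, and the mean is simply $1$. Measurability of $X\mapsto\widehat p_n^{\mathrm{ABC}}$ follows from the measurability of the ABC likelihood kernel, by pairing the mean with the basis elements $\varphi_j$ and applying Fubini's theorem.

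\textbf{Risk bound.} For fixed $X$, Bochner integrals commute with the subtraction of the constant $p_0$, because the posterior is a probability measure. Hence
\begin{equation}
  \widehat p_n^{\mathrm{ABC}}-p_0=\int(p-p_0)\,\Pi_n^{\mathrm{ABC}}(\mathrm dp\mid X).
\end{equation}
The norm of a Bochner integral is at most the integral of the norm, and the Cauchy--Schwarz inequality then gives
\begin{equation}
  \lVert\widehat p_n^{\mathrm{ABC}}-p_0\rVert_2^2\le\int\lVert p-p_0\rVert_2^2\,\Pi_n^{\mathrm{ABC}}(\mathrm dp\mid X).
\end{equation}
Taking $\mathrm E_{\beta_0,p_0}^{(n)}$ and the supremum over $\beta_0\in\mathcal I$ and $p_0\in\mathcal P_\alpha(R,m,M)$, Proposition~\ref{prop:density-second-moment} yields the claim with the same constant $C$.

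\textbf{Main point to check.} The fallback case needs care. I expect Proposition~\ref{prop:density-second-moment} already covers it, because its integral is taken against $\Pi_n^{\mathrm{ABC}}$ as defined by the fallback convention. In that case the Jensen bound applies pointwise in $X$, including on the event $D_n(X)=0$. If instead one wanted a direct bound there, $\lVert 1-p_0\rVert_2\le M+1$ is uniformly bounded. Combined with Corollary~\ref{cor:meta-denom}, this makes the contribution of that event $O(e^{-c'K_n})=o(\epsilon_n^2)$.
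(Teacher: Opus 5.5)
Your proof is correct and follows the paper's argument: establish Bochner integrability, apply Jensen's inequality in $\mathcal L^2$ pointwise in $X$, then take expectations and invoke Proposition~\ref{prop:density-second-moment}. The pointwise bound also covers the fallback event, because that proposition integrates against the fallback-convention posterior. You obtain integrability from the bounded support, for every $X$, where the paper appeals to the second-moment bound; this is a harmless variant. The one point the paper adds that you omit is that $\mathcal D$ is closed and convex in $\mathcal L^2$, so the Bochner mean is itself a density, which is what justifies calling $\widehat p_n^{\mathrm{ABC}}$ a posterior mean density.
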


\begin{proof}
See Appendix~\ref{app:density-proofs}.
\end{proof}

\subsection{Minimax lower bound}

To compare the upper bound with the best risk available from all aggregate observations, we use Assouad's hypercube method \citep[Theorem~2.12]{Tsybakov2009}. The proof restricts the unknown-intensity model to one fixed intensity and applies this method to smooth trigonometric perturbations of the uniform jump density, together with the fixed-intensity Kullback--Leibler comparison of \citet[Lemma~1, inequality~(11)]{GugushviliVanderMeulenSpreij2015}.

\begin{theorem}[Decompounding minimax lower bound]
\label{thm:density-lower}
There exists $c>0$ such that, for all sufficiently large $n$,
\begin{equation}
  \inf_{\widetilde p_n}\sup_{\substack{\beta_0\in \mathcal I\\p_0\in\mathcal P_\alpha(R,m,M)}}
  \mathrm E_{\beta_0,p_0}^{(n)}\lVert\widetilde p_n-p_0\rVert_2^2
  \ge c n^{-2\alpha/(2\alpha+d)},
  \label{eq:density-lower}
\end{equation}
where the infimum is over all measurable estimators based on the full aggregate sample in \eqref{eq:decompound-observations}, without access to $\beta_0$.
\end{theorem}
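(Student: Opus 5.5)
Since the supremum includes every $\beta_0\in\mathcal I$, the minimax risk over the full model is at least the minimax risk of the submodel with one fixed intensity. We therefore fix $\beta_\ast\in\mathcal I$, for instance the midpoint. It suffices to prove the bound for estimators based on $X_1,\ldots,X_n$ iid $\mathrm Q_{\beta_\ast,p}$ with $p$ ranging over a finite subfamily of $\mathcal P_\alpha(R,m,M)$. We apply Assouad's lemma \citep[Theorem~2.12]{Tsybakov2009} to this subfamily.

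\textbf{Construction.} Take $J=J_n\asymp n^{1/(2\alpha+d)}$ and the index set $\mathcal K_J=\{\boldsymbol k\in\mathbb Z^d: J<\max_r|k_r|\le 2J\}$ modulo $\pm$. For each such $\boldsymbol k$ choose a real trigonometric function $e_{\boldsymbol k}$, namely the product tensor functions $\varphi_j$ with $J<q_j\le2J$; these are $\asymp J^d$ orthonormal, mean-zero functions. For $\omega\in\{0,1\}^{m}$ with $m\asymp J^d$, set
\begin{equation}
  p_\omega=1+\gamma J^{-\alpha-d/2}\sum_{l=1}^m\omega_l\varphi_{j_l}.
\end{equation}
Each $p_\omega$ integrates to one. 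The sup-norm of the perturbation is at most $\gamma J^{-\alpha-d/2}\,m\,2^{d/2}\lesssim\gamma J^{d/2-\alpha}\to0$, because $\alpha>d/2$. Hence $m\le p_\omega\le M$ for large $n$. The Sobolev sum is at most $\gamma^2J^{-2\alpha-d}(1+2J)^{2\alpha}m\lesssim\gamma^2$, which is at most $R^2$ for small $\gamma$. So $p_\omega\in\mathcal P_\alpha(R,m,M)$. Neighbours $\omega,\omega'$ differing in one coordinate satisfy $\lVert p_\omega-p_{\omega'}\rVert_2^2=\gamma^2J^{-2\alpha-d}$.

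\textbf{Information bound.} For neighbours we need $n\,\mathrm{KL}(\mathrm Q_{\beta_\ast,p_\omega},\mathrm Q_{\beta_\ast,p_{\omega'}})\le c_0<\infty$. We use \citet[Lemma~1, inequality~(11)]{GugushviliVanderMeulenSpreij2015}, which bounds the fixed-intensity Kullback--Leibler divergence between compound Poisson laws by a constant multiple of $\beta\,\mathrm{KL}(p,\widetilde p)$, since the count mixture only loses information. Because the densities are bounded below by $m$, we have $\mathrm{KL}(p_\omega,p_{\omega'})\le m^{-1}\lVert p_\omega-p_{\omega'}\rVert_2^2$ via the $\chi^2$ bound. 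This gives $n\,\mathrm{KL}\lesssim n\gamma^2J^{-2\alpha-d}\asymp\gamma^2$, which is bounded. Assouad's lemma then bounds the minimax risk below by
\begin{equation}
\frac m2\cdot\gamma^2J^{-2\alpha-d}\cdot\tfrac12 e^{-c_0}\asymp J^{-2\alpha}\asymp n^{-2\alpha/(2\alpha+d)},
\end{equation}
using the Hellinger/KL form of the lemma. The loss $\lVert\cdot\rVert_2^2$ decomposes additively over the orthogonal coordinates, as Assouad requires; for an arbitrary estimator we first project onto the span of the perturbations, which does not increase the loss.

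\textbf{Main obstacle.} The main difficulty is the information bound: confirming that the cited lemma applies with constants uniform in $n$ and in the perturbed densities. This needs the lower bound $p_\omega\ge m$ and the fact that the perturbations stay bounded, which the condition $\alpha>d/2$ guarantees. If the lemma's exact form differs, the fallback is data processing. The observation $X_i$ is a measurable function of $(C_i,Y_{i1},Y_{i2},\ldots)$, whose law at fixed $\beta_\ast$ has divergence $\mathrm E C\cdot\mathrm{KL}(p_\omega,p_{\omega'})=\beta_\ast\mathrm{KL}(p_\omega,p_{\omega'})$. This gives the same bound directly and completes the argument.
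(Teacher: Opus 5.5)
Your proposal is correct and follows essentially the same route as the paper: restriction to a fixed intensity $\beta_\star$, a trigonometric hypercube at frequency $J\asymp n^{1/(2\alpha+d)}$ with amplitude $\asymp J^{-\alpha-d/2}$, the $\chi^2$ bound $\mathrm{KL}(p,\widetilde p)\le m^{-1}\lVert p-\widetilde p\rVert_2^2$, and the fixed-intensity compounding bound $\mathrm{KL}(\mathrm Q_{\beta,p},\mathrm Q_{\beta,\widetilde p})\le\beta\,\mathrm{KL}(p,\widetilde p)$, which the paper also proves by data processing from the latent count and jumps. It finishes with Assouad's lemma, and the differences are cosmetic: you use the full block $J<q_j\le 2J$ with $\{0,1\}$ labels and the KL form of Assouad, whereas the paper uses dyadic cosine products with $\pm1$ signs and the Pinsker/total-variation form. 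Two small precisions: in your data-processing fallback the latent vector should be the stopped vector $(C_i,Y_{i1},\ldots,Y_{iC_i})$ on $\bigcup_{r\ge0}\{r\}\times\mathcal X^r$, since the law of the full infinite jump sequence has infinite divergence whenever $p_\omega\ne p_{\omega'}$; and your displayed Assouad bound omits a harmless constant factor that comes from rounding the projected estimator's coefficients to $\{0,1\}$.
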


\begin{proof}
See Appendix~\ref{app:density-proofs}.
\end{proof}

Corollary~\ref{cor:density-mean-risk} and Theorem~\ref{thm:density-lower} show that the marginal ABC posterior mean attains the minimax rate for estimating the individual jump density from aggregate observations with unknown intensity. The lower bound is for the full aggregate sample, although the ABC procedure uses its zero-count and periodic summaries. The perturbations used in Appendix~\ref{app:density-proofs} converge uniformly to zero while preserving the integral of the density. Since $m<1<M$, all vertices therefore satisfy $m\le p\le M$ for sufficiently large $n$, so these pointwise bounds are compatible with the same lower-bound rate.

\subsection{Monte Carlo estimators}

\begin{theorem}[Decompounding rejection-ABC Monte Carlo contraction]
\label{thm:density-mc}
Under the conditions of Theorem~\ref{thm:density-ideal}, there is $\kappa<\infty$ such that, if $N_n e^{-\kappa K_n}\to\infty$, then, for every fixed $L\ge L_0$, where $L_0$ is as in Theorem~\ref{thm:density-ideal},
\begin{equation}
  \sup_{\substack{\beta_0\in \mathcal I\\p_0\in\mathcal P_\alpha(R,m,M)}}\mathrm E_{\beta_0,p_0}^{(n)}\mathrm E_{\mathrm{MC}}
  \widehat\Pi_{n,N}^{\mathrm{ABC}}
  \left\{p:\lVert p-p_0\rVert_2>L\rho_{n,\alpha,d}\mid X\right\}\to0.
\end{equation}
\end{theorem}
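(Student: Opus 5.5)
The natural route is to apply the abstract transfer result, Theorem~\ref{thm:meta-mc}, to the joint parameter $(\beta,p)\in\Theta$ with the product metric $d_\Theta$. Then we pass to the density marginal. The joint empirical posterior of $\{(\beta,p):d_\Theta((\beta,p),(\beta_0,p_0))>L\epsilon_n\}$ dominates the marginal mass $\widehat\Pi_{n,N}^{\mathrm{ABC}}\{p:\lVert p-p_0\rVert_2>L\epsilon_n\}$, since $\lVert p-p_0\rVert_2\le d_\Theta$. We also use $\epsilon_n\asymp\rho_{n,\alpha,d}$, adjusting $L$ by a constant factor. So the whole task is to verify Condition~\ref{ass:meta} for the joint model, with $\mathcal F_n=\mathcal I\times\mathcal P_\alpha(R,m,M)$, $K_n$ as in \eqref{eq:density-dimensions}, prior $\widetilde\Pi_n$, and synthetic kernel $\mathrm P^{(m_n)}_{\beta,p}$. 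Presumably this verification is already the content of the proof of Theorem~\ref{thm:density-ideal}, and I would reuse it and record the resulting $\kappa$ from Corollary~\ref{cor:meta-denom}.

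The verification runs as follows.

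For (B1), take $\mathcal E_n=\{\lVert S_n(X)-t_{J_n}(\beta_0,p_0)\rVert_2\le B_0\sqrt{K_n/n}\}$. Then \eqref{eq:density-observed-concentration} gives the required bound.

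For (B2), let $\mathcal B_n$ be the product of an interval $\{|\beta-\beta_0|\le\delta\epsilon_n\}\cap\mathcal I$ with the image of the coefficient ball from Proposition~\ref{prop:density-local-mass}.
\begin{itemize}
\item The interval has $w$-mass at least $w_-\delta\epsilon_n\ge e^{-K_n}$, since $\epsilon_n$ is polynomial in $n$.
\item The density factor has mass at least $e^{-CK_n}$ by \eqref{eq:density-square-root-local-mass}.
\end{itemize}

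For (B3), every element of $\mathcal B_n$ lies within $C\epsilon_n$ of $(\beta_0,p_0)$ in $d_\Theta$. By the upper bound in Proposition~\ref{prop:decompound-joint-stability}, $\lVert t_{J_n}(\beta,p)-t_{J_n}(\beta_0,p_0)\rVert_2\le C'\epsilon_n$. On $\mathcal E_n$, Corollary~\ref{cor:density-synthetic-small-ball} together with the triangle inequality gives acceptance probability at least $c_1\ge e^{-K_n}$, once $A>A_0:=C'+2B_0$ (absorbing $\sqrt{K_n/n}\le\epsilon_n$).

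For (B4) and (B5), take $\mathcal G_n=\mathcal I\times\operatorname{supp}(\Pi_n)$, so (B5) is automatic. For $(\beta,p)\in\mathcal G_n$ with $d_\Theta>M\epsilon_n$, first note that $p_0$ is in the Sobolev class, so $\lVert p_0-\mathrm{Proj}_{2J_n}p_0\rVert_2\lesssim J_n^{-\alpha}$. Also $p\in\mathcal V_{2J_n}$, hence $\lVert\mathrm{Proj}_{2J_n}(p-p_0)\rVert_2\ge\lVert p-p_0\rVert_2-CJ_n^{-\alpha}$. The lower bound in Proposition~\ref{prop:decompound-joint-stability} then gives a center separation $\gtrsim (M-C)\epsilon_n$. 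The decomposition \eqref{eq:meta-separation-explanation} forces a synthetic deviation of order $(M-C_S)\epsilon_n$, and \eqref{eq:density-synthetic-concentration} with $B\asymp (M-C_S)\sqrt{n/K_n}\,\epsilon_n\gtrsim M-C_S$ yields the bound $\exp\{-C_D(M-C_S)^2K_n\}$.

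With Condition~\ref{ass:meta} established, Theorem~\ref{thm:meta-mc} applies directly under $N_ne^{-\kappa K_n}\to\infty$, and the marginal domination step finishes the argument. The obstacle I expect is bookkeeping rather than new analysis. The first point is the uniformity of (B4) over candidates $p$ in the support whose envelope grows like $D_{J_n}$. This is exactly why \eqref{eq:density-synthetic-concentration} is stated over $\operatorname{supp}(\Pi_n)$ with the enlarged sample $m_n$, and we rely on it rather than on bounds for $p_0$. The second point is making sure the constant $B$ in (B4) is fixed once $M$ is fixed, as the proposition requires. Uniformity over $\beta_0\in\mathcal I$ is inherited because every cited bound is uniform in the intensity.
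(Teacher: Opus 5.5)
Your proposal is correct and follows essentially the paper's route: the paper likewise reuses the verification of Condition~\ref{ass:meta} from the proof of Theorem~\ref{thm:density-ideal} on the joint space and takes $\kappa$ from Corollary~\ref{cor:meta-denom}. The only cosmetic difference is that the paper applies the accepted-draw bound \eqref{eq:proof-mc-mass} directly to the cylinder $\mathcal I\times B$ and invokes Theorem~\ref{thm:density-ideal} for the ideal term, so $L_0$ carries over directly; you instead apply Theorem~\ref{thm:meta-mc} to the joint $d_\Theta$-ball and dominate the marginal, which yields the same threshold after the $\epsilon_n\asymp\rho_{n,\alpha,d}$ conversion.
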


\begin{proof}
See Appendix~\ref{app:density-proofs}.
\end{proof}

\begin{corollary}[Jump-density empirical posterior mean risk]
\label{cor:density-mc-risk}
Under the conditions of Theorem~\ref{thm:density-mc}, if $N_n e^{-\kappa K_n}\ge C_N\log(1/\rho_{n,\alpha,d})$ with $C_N$ sufficiently large, then
\begin{equation}
  \sup_{\substack{\beta_0\in \mathcal I\\p_0\in\mathcal P_\alpha(R,m,M)}}\mathrm E_{\beta_0,p_0}^{(n)}\mathrm E_{\mathrm{MC}}
  \lVert\widehat p_{n,N}^{\mathrm{ABC}}-p_0\rVert_2^2
  \le Cn^{-2\alpha/(2\alpha+d)}.
  \label{eq:density-mc-risk}
\end{equation}
\end{corollary}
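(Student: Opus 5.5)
The plan is to deduce the statement from Corollary~\ref{cor:meta-mc-mean}, applied to the joint parameter $(\beta,p)\in\Theta=\mathcal I\times\mathcal D$. The ambient Hilbert space is $\mathbb R\times\mathcal L^2(\mathcal X)$ with the product metric $d_\Theta$ of \eqref{eq:decompound-product-metric}. The statement concerns only the density marginal, but the marginal empirical mean is the second component of the joint empirical mean. Moreover,
\begin{equation}
  \lVert\widehat p_{n,N}^{\mathrm{ABC}}-p_0\rVert_2^2
  \le d_\Theta^2\bigl((\widehat\beta_{n,N},\widehat p_{n,N}^{\mathrm{ABC}}),(\beta_0,p_0)\bigr),
\end{equation}
where $\widehat\beta_{n,N}$ is the empirical mean of the accepted intensities. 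It therefore suffices to prove the joint risk bound at rate $\epsilon_n^2\asymp n^{-2\alpha/(2\alpha+d)}$.

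\textbf{Step 1: the abstract conditions.} The hypotheses of Theorem~\ref{thm:density-mc} already require Condition~\ref{ass:meta} for the joint problem. This is the verification carried out in the proofs of Theorem~\ref{thm:density-ideal} and Theorem~\ref{thm:density-mc}.
\begin{enumerate}[label=(\roman*)]
\item (B2) comes from Proposition~\ref{prop:density-local-mass} together with the intensity prior bounded below on a $\delta\epsilon_n$-interval around $\beta_0$.
\item (B1) and (B3) come from Proposition~\ref{prop:density-concentration} and Corollary~\ref{cor:density-synthetic-small-ball}, combined with the upper bound in Proposition~\ref{prop:decompound-joint-stability}.
\item (B4) comes from the lower bound in Proposition~\ref{prop:decompound-joint-stability} with the sieve $\mathcal G_n=\mathcal I\times\operatorname{supp}(\Pi_n)$. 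The high-frequency part $\lVert p-\mathrm{Proj}_{2J_n}p\rVert_2$ vanishes for $p$ in the support, and $\lVert p_0-\mathrm{Proj}_{2J_n}p_0\rVert_2\lesssim J_n^{-\alpha}$.
\item (B5) is trivial, since $\mathcal G_n$ has full prior mass.
\end{enumerate}
I will quote these rather than redo them. This also supplies the constants $\kappa$ and $c'$ from Corollary~\ref{cor:meta-denom}. Because $K_n\asymp n^{d/(2\alpha+d)}$ grows polynomially, $e^{-c'K_n}=O(\epsilon_n^2)$. In addition, $\log(1/\epsilon_n)\asymp\log(1/\rho_{n,\alpha,d})$, so the budget hypothesis \eqref{eq:meta-N-risk} follows from the stated one after adjusting $C_N$.

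\textbf{Step 2: the fallback condition.} The fallback point is $(\beta^\dagger,1)$. For $p_0\in\mathcal P_\alpha(R,m,M)$ we have $\lVert1-p_0\rVert_2\le 1+M$, and $|\beta^\dagger-\beta_0|\le\overline\beta$. Hence \eqref{eq:meta-fallback-bounded} holds uniformly.

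\textbf{Step 3: the joint second-moment bound.} This is the only genuinely new input, \eqref{eq:meta-ideal-second} for $d_\Theta$. The density part is exactly Proposition~\ref{prop:density-second-moment}. For the intensity part I will split the integral of $|\beta-\beta_0|^2$ at the contraction radius $L\epsilon_n$:
\begin{enumerate}[label=(\roman*)]
\item on $\{|\beta-\beta_0|\le L\epsilon_n\}$ the contribution is at most $L^2\epsilon_n^2$;
\item since $|\beta-\beta_0|\le\overline\beta$, the remaining contribution is at most $\overline\beta^2$ times the posterior mass of $\{d_\Theta>L\epsilon_n\}$.
\end{enumerate}
By Theorem~\ref{thm:meta-ideal}, that mass is at most $2e^{-\gamma_LK_n}$ on $\mathcal E_n$ and at most one on $\mathcal E_n^c$. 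Its expectation is therefore $O(e^{-\min(\gamma_L,c)K_n})=O(\epsilon_n^2)$.

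\textbf{Step 4: conclusion.} Corollary~\ref{cor:meta-mc-mean} then gives the joint risk bound, and the marginal inequality above finishes the proof.

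\textbf{Main obstacle.} I expect the main difficulty to be the bookkeeping in Step~3 and Step~1. The joint application must use the same $\kappa$ as Theorem~\ref{thm:density-mc}, and the constants must be uniform over $\beta_0\in\mathcal I$. The delicate point is that the metric for Theorem~\ref{thm:meta-ideal} must be the joint $d_\Theta$, not the marginal $\mathcal L^2$ metric, so that the intensity tail is controlled. If the earlier proofs verified Condition~\ref{ass:meta} only for the marginal metric, I would instead re-derive (B4) in $d_\Theta$. That upgrade is immediate from the two-sided bound in Proposition~\ref{prop:decompound-joint-stability}, which controls $|\beta-\beta_0|$ and the projected density difference jointly.
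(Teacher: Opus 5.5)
Your argument is correct, but it is organized differently from the paper's. The paper does not apply Corollary~\ref{cor:meta-mc-mean} on the joint space. Instead it repeats that corollary's argument directly on the density coordinate:
\begin{enumerate}
\item conditional on $X$ and $A_{n,N}=r\ge1$, the accepted density components are iid from the marginal $\Pi_n^{\mathrm{ABC}}(\cdot\mid X)$;
\item Jensen's inequality in $\mathcal L^2$ bounds the conditional loss by the marginal second moment of Proposition~\ref{prop:density-second-moment};
\item on $\{A_{n,N}=0\}$ the loss is $\lVert 1-p_0\rVert_2^2\le M-1$, and its expected probability is at most $e^{-c'K_n}+\exp\{-N_ne^{-\kappa K_n}\}\le e^{-c'K_n}+\rho_{n,\alpha,d}^{C_N}$.
\end{enumerate}
The paper therefore never needs your Step~3, since no second-moment control of the intensity is required.

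Your route lifts everything to $d_\Theta$ so that the abstract corollary applies verbatim. The price is the extra intensity second moment. That step is valid: the proof of Theorem~\ref{thm:density-ideal} already verifies Condition~\ref{ass:meta} in the product metric $d_\Theta$, so your contingency of re-deriving (B4) is unnecessary. In addition, $|\beta-\beta_0|\le\overline\beta-\underline\beta$ on the posterior support and at the fallback, which makes the tail term $O(e^{-\min(\gamma_L,c)K_n})=O(\epsilon_n^2)$.

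What your version buys is modularity and a by-product: the joint bound also controls the empirical mean of the accepted intensities at squared rate $\epsilon_n^2$, though not at an optimal rate. The paper's version is shorter and uses only the density-marginal second moment.
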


\begin{proof}
See Appendix~\ref{app:density-proofs}.
\end{proof}

Conditional on a proposed $(\beta,p_v)$, the sampler in Proposition~\ref{prop:density-simulator} uses an expected $\beta D_{J_n}$ cube-uniform draws per total and $\beta m_nD_{J_n}$ per synthetic dataset. Since $m_n\asymp nD_{J_n}$ and $D_{J_n}\asymp K_n$, this expected number of inner uniform draws is of order $nK_n^2$, uniformly over $\beta\in \mathcal I$. Averaging over the intensity prior preserves this order.

\section{Discussion}
\label{sec:discussion}

The results give minimax-rate ABC contraction, risk bounds for a clipped posterior mean in regression, and posterior mean risk bounds for the jump density in decompounding. The abstract theorem permits these guarantees via conditions on local prior mass, bounds on acceptance probabilities away from the truth, and control of prior mass outside a sieve.

The procedures assume known $\alpha$ and are not adaptive to unknown smoothness. Adaptive minimax guarantees for these ABC constructions would require choosing the prior regularity and summary resolution jointly. Random-series adaptation under ordinary likelihoods \citep{GhosalLemberVanderVaart2003,ShenGhosal2015} suggests hierarchical constructions, but an ABC analysis would also need to control the tolerance and acceptance probabilities across resolutions. For joint inference on the regression coefficients and unknown error-shape parameters, the summaries would additionally need to be informative about the error distribution.

For decompounding, the proportion of zero-count periods controls the unknown count intensity through $\mathrm Q_{\beta,p}(\{0\})=e^{-\beta}$. The joint stability bound permits minimax-rate estimation of the jump density uniformly over a fixed compact intensity interval in $(0,\pi/2)$. The rate statements concern the density marginal and do not assert an optimal estimation rate for the intensity coefficient. Extending the analysis beyond this intensity range would require a different inverse argument, since the positive-real-part bound used here no longer applies.

Reducing $m_n$ would require sharper control of synthetic-summary variability than the bound obtained from $p\le D_{J_n}$ uniformly over $p\in\operatorname{supp}(\Pi_n)$. Reducing the number of prior draws is a separate problem. MCMC-ABC \citep{MarjoramMolitorPlagnolTavare2003} and sequential Monte Carlo ABC \citep{BeaumontCornuetMarinRobert2009,DelMoralDoucetJasra2012} concentrate simulations in regions supported by the data. Extending the risk guarantees to these algorithms would require bounds for dependent or weighted Monte Carlo averages as the summary dimension increases and the tolerance decreases.

For regression adjustment \citep{BeaumontZhangBalding2002,LiFearnhead2018a,LiFearnhead2018b} and synthetic likelihood \citep{Wood2010,FrazierNottDrovandiKohn2022}, nonparametric guarantees would require control of the adjustment error or Gaussian summary approximation, respectively, at a growing summary dimension. The balance between these errors and the statistical rate could guide the choice of summary resolution and simulation budget.

Frequentist coverage of credible sets is another question, beyond the risk and contraction bounds proved here. In Gaussian sequence models, \citet[Theorem~3.6]{SzaboVanderVaartVanZanten2015} obtain coverage tending to one for inflated empirical Bayes credible balls under their polished-tail condition on the true coefficient sequence. In fixed-dimensional ABC, \citet{LiFearnhead2018a} establish asymptotically calibrated uncertainty for regression-adjusted posteriors under suitable assumptions. Analogous results for the nonparametric ABC models studied here would require control of estimation bias relative to posterior spread, as well as the effect of the ABC tolerance.

\section*{Declarations}
ChatGPT was used to assist with expositional editing, proof-checking support, and LaTeX formatting. The author verified the mathematical content and remains fully responsible for the correctness, originality, and integrity of the manuscript.

\appendix

\section{Auxiliary results}
\label{app:external}

\begin{lemma}[Weighted Gaussian-square concentration]
\label{lem:weighted-chi-square}
Let $(G_j)_{j\ge1}$ be independent standard Gaussian variables and let $(a_j)_{j\ge1}$ be a summable sequence of nonnegative weights, with finite sequences extended by zeros. Put $A_2=\left(\sum_{j=1}^{\infty}a_j^2\right)^{1/2}$ and $A_\infty=\sup_{j\ge1}a_j$. Then, for every $t>0$,
\begin{equation}
  \mathrm P\left\{\sum_{j=1}^{\infty}a_j(G_j^2-1)>2A_2\sqrt t+2A_\infty t\right\}\le e^{-t}.
  \label{eq:weighted-chi-square}
\end{equation}
\end{lemma}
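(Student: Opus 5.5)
This is the Laurent--Massart inequality, and I would prove it with the standard Chernoff argument, first for finitely many weights and then for the infinite series by a limit. Since $\sum_j a_j<\infty$, the series $\sum_j a_j(G_j^2-1)$ converges in $\mathcal L^1$ and almost surely. It is therefore enough to prove the bound for the truncations $W_N=\sum_{j\le N}a_j(G_j^2-1)$ with the constants $A_2,A_\infty$ of the full sequence. Truncation only decreases these constants, so the truncated bounds hold with the full $A_2,A_\infty$. For the limit, I note that $\{W>x\}\subset\liminf_N\{W_N>x\}$ up to a null set, so Fatou's lemma gives $\mathrm P(W>x)\le\liminf_N \mathrm P(W_N>x)\le e^{-t}$.

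For finite $N$, I would compute the exact log-Laplace transform. For $0\le\lambda<1/(2A_\infty)$,
\begin{equation}
  \log\mathrm E\,e^{\lambda W_N}
  =\sum_{j\le N}\left\{-\lambda a_j-\tfrac12\log(1-2\lambda a_j)\right\}.
\end{equation}
I would then use the elementary inequality $-u-\tfrac12\log(1-2u)\le u^2/(1-2u)$ for $0\le u<1/2$. To check it, expand both sides as power series: the left side is $\sum_{k\ge2}(2u)^k/(2k)$ and the right side is $\sum_{k\ge2}(2u)^k/4$, and $1/(2k)\le1/4$ for $k\ge2$. Applying this termwise and using $a_j\le A_\infty$ in the denominator gives
\begin{equation}
  \log\mathrm E\,e^{\lambda W_N}\le\frac{\lambda^2A_2^2}{1-2\lambda A_\infty},
\end{equation}
which is a sub-gamma bound with variance factor $v=2A_2^2$ and scale $c=2A_\infty$.

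The main step is to invert this bound through Markov's inequality. The standard sub-gamma computation gives $\inf_\lambda\{\lambda^2A_2^2/(1-2\lambda A_\infty)-\lambda x\}\le -t$ at $x=2A_2\sqrt t+2A_\infty t$. To verify this directly, I would choose $\lambda$ so that $1-2\lambda A_\infty=A_2/(A_2+2A_\infty\sqrt t)$; equivalently, with $s=\sqrt t$, take $\lambda=s/(A_2+2A_\infty s)$. Substituting, the exponent becomes
\begin{equation}
  \frac{s^2A_2}{A_2+2A_\infty s}-\frac{s\,(2A_2 s+2A_\infty s^2)}{A_2+2A_\infty s}
  =-s^2\cdot\frac{A_2+2A_\infty s}{A_2+2A_\infty s}=-t.
\end{equation}
Hence $\mathrm P(W_N>x)\le e^{-t}$. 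The degenerate case $A_\infty=0$ forces all weights to vanish, so $W_N\equiv0$ and the bound is trivial. The only mildly delicate points are the choice of $\lambda$ and the check that it satisfies the admissibility constraint $\lambda<1/(2A_\infty)$. Both are direct from the formula above, since $2A_\infty s<A_2+2A_\infty s$ whenever $A_2>0$, and $A_2>0$ whenever $A_\infty>0$.
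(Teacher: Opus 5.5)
Your proof is correct and takes essentially the same route as the paper: the same Chernoff bound via $-x-\log(1-x)\le x^2/\{2(1-x)\}$, the same choice $\lambda=\sqrt t/(A_2+2A_\infty\sqrt t)$, and a passage from truncations to the full series using almost-sure convergence and Fatou's lemma. The only cosmetic difference is that you apply Fatou directly to the event $\{W>x\}$, whereas the paper adds a slack $\eta>0$ to the threshold and then lets $\eta\downarrow0$.
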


\begin{proof}
The finite-sequence inequality is the weighted chi-square bound of \citet[Lemma 1]{LaurentMassart2000}. For the summable extension, if $A_2=0$, then all $a_j$ vanish and the bound holds. Assume $A_2>0$. For $m\in\mathbb N$, put $T_m=\sum_{j=1}^m a_j(G_j^2-1)$. For $0<s<(2A_\infty)^{-1}$, independence and $-\log(1-x)-x\le x^2\{2(1-x)\}^{-1}$ give
\begin{equation}
\begin{aligned}
 \log\mathrm E e^{sT_m}
 &=\frac12\sum_{j=1}^{m}\{-2sa_j-\log(1-2sa_j)\}\\
 &\le \frac{s^2\sum_{j=1}^m a_j^2}{1-2sA_\infty}
 \le \frac{s^2A_2^2}{1-2sA_\infty}.
\end{aligned}
\end{equation}
Chernoff's bound with $s=\sqrt t/(A_2+2A_\infty\sqrt t)$ yields
\begin{equation}
  \mathrm P\left\{T_m>2A_2\sqrt t+2A_\infty t\right\}\le e^{-t}.
\end{equation}
Since $\sum_{j=1}^{\infty} a_j\mathrm E|G_j^2-1|<\infty$, Tonelli's theorem shows that $T=\sum_{j=1}^{\infty}a_j(G_j^2-1)$ converges absolutely almost surely. Thus $T_m\to T$ almost surely. For every $\eta>0$, Fatou's lemma gives
\begin{equation}
  \mathrm P\left\{T>2A_2\sqrt t+2A_\infty t+\eta\right\}
  \le\liminf_{m\to\infty}\mathrm P\left\{T_m>2A_2\sqrt t+2A_\infty t\right\}
  \le e^{-t}.
\end{equation}
Letting $\eta\downarrow0$ proves \eqref{eq:weighted-chi-square}.
\end{proof}

For a centered real random variable $Y$, let $\lVert Y\rVert_{\psi_2}$ denote its sub-Gaussian Orlicz norm.

\begin{lemma}[Euclidean concentration from directional sub-Gaussianity]
\label{lem:directional-subgaussian}
Let $V$ be a centered random vector in $\mathbb R^K$ satisfying
\begin{equation}
  \sup_{u\in\mathbb S^{K-1}}\lVert u^\top V\rVert_{\psi_2}\le\sigma.
\end{equation}
Then there are universal $c,B_0>0$ such that, for every $B\ge B_0$,
\begin{equation}
  \mathrm P\left\{\lVert V\rVert_2>B\sigma\sqrt K\right\}\le e^{-cB^2K}.
  \label{eq:directional-subgaussian-vector}
\end{equation}
\end{lemma}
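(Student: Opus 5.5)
The plan is a covering (net) argument on the sphere, combined with the directional tail bound implied by the $\psi_2$ control. Fix a $1/2$-net $\mathcal N\subset\mathbb S^{K-1}$ of cardinality at most $5^K$, obtained from the standard volumetric argument. For every $x\in\mathbb R^K$, pick $u\in\mathcal N$ with $\lVert u-x/\lVert x\rVert_2\rVert_2\le1/2$. Then
\begin{equation}
  \lVert x\rVert_2=\sup_{w\in\mathbb S^{K-1}}w^\top x\le \max_{u\in\mathcal N}u^\top x+\tfrac12\lVert x\rVert_2,
\end{equation}
so $\lVert V\rVert_2\le 2\max_{u\in\mathcal N}u^\top V$. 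This reduces the Euclidean event to a finite maximum of scalar projections.

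Next I use the definition of the Orlicz norm. For each fixed $u$, $\lVert u^\top V\rVert_{\psi_2}\le\sigma$ gives $\mathrm E\exp\{(u^\top V)^2/\sigma^2\}\le2$. By Markov's inequality,
\begin{equation}
  \mathrm P\{u^\top V>s\}\le 2e^{-s^2/\sigma^2}.
\end{equation}
With the convention $\mathrm E\exp(Y^2/\lVert Y\rVert_{\psi_2}^2)\le2$, no additional universal constant is needed. Under other equivalent conventions, a universal factor $c_0$ appears in the exponent, which only changes the universal constants below.

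Take $s=B\sigma\sqrt K/2$ and apply a union bound over the net:
\begin{equation}
  \mathrm P\{\lVert V\rVert_2>B\sigma\sqrt K\}\le 2\cdot5^K\exp\{-B^2K/4\}=\exp\{\log2+K\log5-B^2K/4\}.
\end{equation}
Since $K\ge1$, we have $\log 2+K\log5\le K(\log2+\log5)$. Choose $B_0$ with $B_0^2/8\ge\log10$. Then for every $B\ge B_0$ the exponent is at most $-B^2K/8$, which gives \eqref{eq:directional-subgaussian-vector} with $c=1/8$.

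The main point requiring care is that both $c$ and $B_0$ remain universal, independent of $K$ and $\sigma$. This is exactly why the entropy term $K\log 5$ is absorbed into half of $B^2K/4$ rather than into $c$. It is also the step where the convention for $\lVert\cdot\rVert_{\psi_2}$ must be fixed explicitly.

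Finally, the degenerate case $\sigma=0$ needs to be covered. There $V=0$ almost surely, and the probability is zero whenever the bound is read with the strict inequality. Centering of $V$ is not actually needed for this argument.
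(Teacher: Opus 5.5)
Your proof is correct and follows the paper's argument essentially step for step. Both use a $1/2$-net of cardinality at most $5^K$, the reduction $\lVert V\rVert_2\le 2\max_{u\in\mathcal N}u^\top V$, the directional $\psi_2$ tail bound with a union bound, absorption of $\log 2+K\log 5\le K\log 10$ into half of the exponent, and a separate treatment of $\sigma=0$. The only differences are cosmetic: you use one-sided projections and fix the Orlicz-norm convention to get $c=1/8$ explicitly, whereas the paper keeps a generic universal constant $c_0$ and obtains $c=c_0/2$.
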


\begin{proof}
If $\sigma=0$, then every coordinate of $V$ has zero $\psi_2$ norm, so $V=0$ almost surely and the conclusion holds. Hence suppose $\sigma>0$. For each fixed $u$, the sub-Gaussian tail bound gives $\mathrm P\{|u^\top V|>t\}\le2e^{-ct^2/\sigma^2}$. Use a $1/2$-net of $\mathbb S^{K-1}$ with cardinality at most $5^K$ and the inequality $\lVert z\rVert_2\le2\max_{u\in\mathcal N}|u^\top z|$; compare \citet[Corollary~4.2.13 and Exercise~4.4.2]{Vershynin2018}. A union bound gives, for a universal $c_0>0$,
\begin{equation}
  \mathrm P\left\{\lVert V\rVert_2>B\sigma\sqrt K\right\}
  \le2\cdot5^K e^{-c_0B^2K}.
\end{equation}
Since $\log2+K\log5\le K\log10$, choosing $B_0$ so that $c_0B_0^2\ge2\log10$ proves the bound with $c=c_0/2$.
\end{proof}

\begin{corollary}[Central ball under a second-moment bound]
\label{cor:directional-small-ball}
Under the assumptions of Lemma~\ref{lem:directional-subgaussian}, if $\mathrm E\lVert V\rVert_2^2\le C_0K\sigma^2$ for some $C_0\ge0$, then there is $B_1=B_1(C_0)<\infty$ such that
\begin{equation}
  \mathrm P\left\{\lVert V\rVert_2\le B_1\sigma\sqrt K\right\}\ge\frac12.
\end{equation}
\end{corollary}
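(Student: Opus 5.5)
This is a one-line application of Markov's inequality to the second-moment hypothesis. Set $B_1=\sqrt{2C_0}$, or $B_1$ equal to any positive number when $C_0=0$, and bound the complementary event. If $\sigma=0$, the proof of Lemma~\ref{lem:directional-subgaussian} already shows that $V=0$ almost surely. The event $\{\lVert V\rVert_2\le 0\}$ then has probability one, so we may assume $\sigma>0$.

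For $\sigma>0$ and $C_0>0$, Markov's inequality applied to $\lVert V\rVert_2^2$ gives
\begin{equation}
  \mathrm P\left\{\lVert V\rVert_2>B_1\sigma\sqrt K\right\}
  \le\frac{\mathrm E\lVert V\rVert_2^2}{B_1^2\sigma^2K}
  \le\frac{C_0K\sigma^2}{2C_0\sigma^2K}=\frac12 .
\end{equation}
Taking complements yields the claim.

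When $C_0=0$, the hypothesis gives $\mathrm E\lVert V\rVert_2^2=0$, so $V=0$ almost surely and any $B_1>0$ works. To avoid this case split, one can simply take $B_1=\sqrt{2\max(C_0,1)}$; Markov's inequality still gives the bound $\tfrac12$.

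There is no real obstacle here. The only point to watch is that $B_1$ must depend only on $C_0$, not on $K$ or $\sigma$, and the Markov bound is already scale-free in both. The sub-Gaussian assumption is not needed except to fix the setting. Alternatively, one could take $B_1=\max(B_0,\cdot)$ and use the tail bound of Lemma~\ref{lem:directional-subgaussian}, which gives probability at least $1-e^{-cB^2K}$. That route, however, would need $K$ large or $B$ large enough that $e^{-cB^2}\le 1/2$, and the Markov route is cleaner.
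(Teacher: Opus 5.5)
Your proof is correct and is essentially the paper's argument. The paper also disposes of $\sigma=0$ via $V=0$ almost surely, then applies Markov's inequality to $\lVert V\rVert_2^2$ with $B_1=\max\{1,\sqrt{2C_0}\}$; like your choice $\sqrt{2\max(C_0,1)}$, this sidesteps a separate $C_0=0$ case.
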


\begin{proof}
If $\sigma=0$, then $V=0$ almost surely. Otherwise, taking $B_1=\max\{1,\sqrt{2C_0}\}$ and applying Markov's inequality gives
\begin{equation}
  \mathrm P\left\{\lVert V\rVert_2>B_1\sigma\sqrt K\right\}
  \le \frac{\mathrm E\lVert V\rVert_2^2}{B_1^2\sigma^2K}
  \le\frac12.
\end{equation}
\end{proof}

\begin{lemma}[Bernstein--net bound]
\label{lem:bernstein-net}
Let $W_1,\ldots,W_m$ be iid centered vectors in $\mathbb R^K$. Suppose there are constants $H,V\ge0$ with $H+V>0$ such that, for every $u\in\mathbb S^{K-1}$,
\begin{equation}
  |u^\top W_1|\le H\sqrt K\quad\text{a.s.},
  \qquad
  \mathrm{Var}(u^\top W_1)\le V.
\end{equation}
Then there is a universal constant $c>0$ such that, for every $t>0$,
\begin{equation}
  \mathrm P\left\{\left\lVert m^{-1}\sum_{i=1}^mW_i\right\rVert_2>t\right\}
  \le2\cdot5^K\exp\left\{-\frac{cmt^2}{V+H\sqrt K\,t}\right\}.
  \label{eq:bernstein-net}
\end{equation}
\end{lemma}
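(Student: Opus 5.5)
The plan is the standard net-plus-union-bound argument: reduce the Euclidean norm to finitely many directional averages, then apply the scalar Bernstein inequality to each direction. Fix a $1/2$-net $\mathcal N$ of $\mathbb S^{K-1}$ with $|\mathcal N|\le5^K$, exactly as in the proof of Lemma~\ref{lem:directional-subgaussian}. For any $z\in\mathbb R^K$, pick $u^\star=z/\lVert z\rVert_2$ and a net point $u$ with $\lVert u-u^\star\rVert_2\le1/2$. Then
\[
\lVert z\rVert_2=u^{\star\top}z\le u^\top z+\tfrac12\lVert z\rVert_2 ,
\]
so $\lVert z\rVert_2\le2\max_{u\in\mathcal N}|u^\top z|$. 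Applying this to $z=\bar W=m^{-1}\sum_{i=1}^m W_i$ gives
\[
\mathrm P\{\lVert\bar W\rVert_2>t\}\le\sum_{u\in\mathcal N}\mathrm P\{|u^\top\bar W|>t/2\}.
\]
The problem is thereby reduced to a one-dimensional tail bound that must hold uniformly in $u$.

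For a fixed $u\in\mathcal N$, the scalars $Y_i=u^\top W_i$ are iid and centered. By hypothesis they satisfy $|Y_i|\le H\sqrt K$ almost surely and $\mathrm{Var}(Y_i)\le V$. The two-sided scalar Bernstein inequality then gives
\[
\mathrm P\Bigl\{\Bigl|\sum_{i=1}^m Y_i\Bigr|>ms\Bigr\}\le2\exp\Bigl\{-\frac{m s^2/2}{V+H\sqrt K\,s/3}\Bigr\}.
\]
We take $s=t/2$. The exponent becomes $-(mt^2/8)/(V+H\sqrt K\,t/6)$. Since $t/6\le t$, this exponent is at most $-(mt^2/8)/(V+H\sqrt K\,t)$, so any universal $c\le1/8$ works. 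The hypothesis $H+V>0$ makes the denominator positive for every $t>0$, so the bound is meaningful.

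Multiplying by $|\mathcal N|\le5^K$ in the union bound yields the claimed $2\cdot5^K\exp\{-cmt^2/(V+H\sqrt K\,t)\}$. The argument is routine, and no single step is a serious obstacle. The only point needing care is that the almost-sure bound and the variance bound hold for every unit $u$, and hence for every net point, with the same constants $H$ and $V$. This uniformity is what lets a single universal $c$ serve across the whole net.
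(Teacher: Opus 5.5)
Your proposal is correct and follows essentially the same route as the paper: a $1/2$-net $\mathcal N$ of size at most $5^K$, the bound $\lVert z\rVert_2\le 2\max_{u\in\mathcal N}|u^\top z|$, scalar Bernstein at level $t/2$ in each net direction, and a union bound over the finite net. Your version also derives the net inequality and makes the universal constant explicit ($c=1/8$), which the paper leaves implicit.
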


\begin{proof}
Let $\mathcal N$ be a $1/2$-net of $\mathbb S^{K-1}$ with $|\mathcal N|\le5^K$. For every $z\in\mathbb R^K$, $\lVert z\rVert_2\le2\max_{u\in\mathcal N}|u^\top z|$. Hence the event on the left-hand side of \eqref{eq:bernstein-net} implies that, for some $u\in\mathcal N$,
\begin{equation}
  \left|m^{-1}\sum_{i=1}^m u^\top W_i\right|>t/2.
\end{equation}
For a fixed $u$, scalar Bernstein's inequality \citep[Theorem~2.8.4]{Vershynin2018}, the variance bound $\sum_{i=1}^{m}\mathrm{Var}(u^\top W_i)\le mV$, and the almost-sure bound $|u^\top W_i|\le H\sqrt K$ give
\begin{equation}
  \mathrm P\left\{\left|m^{-1}\sum_{i=1}^m u^\top W_i\right|>t/2\right\}
  \le2\exp\left\{-\frac{cmt^2}{V+H\sqrt K\,t}\right\}.
\end{equation}
A union bound over $\mathcal N$ proves \eqref{eq:bernstein-net}.
\end{proof}

\begin{theorem}[Sobolev algebra and composition]
\label{thm:sobolev-composition}
Let $-\infty<m<M<\infty$ and $\alpha>d/2$. Periodic $\mathcal H^\alpha([0,1]^d)$ is a Banach algebra. If $F$ is smooth on an open interval containing $[m,M]$, then $p\mapsto F\circ p$ maps bounded subsets of
\begin{equation}
  \{p\in \mathcal H^\alpha:m\le p\le M\}
\end{equation}
into bounded subsets of $\mathcal H^\alpha$. In particular, $p\mapsto\sqrt p$ has this property when $m>0$.
\end{theorem}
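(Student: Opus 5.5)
Theorem~\ref{thm:sobolev-composition} is classical, and I would derive it from standard facts about periodic Sobolev spaces. Write periodic $\mathcal H^\alpha$ through Fourier coefficients, with $\lVert f\rVert_{\mathcal H^\alpha}^2\asymp\sum_{\boldsymbol k\in\mathbb Z^d}(1+|\boldsymbol k|)^{2\alpha}|\widehat f(\boldsymbol k)|^2$. This norm is equivalent to the block-weighted coefficient norm in \eqref{eq:density-class}, since $q(\boldsymbol j)$ is comparable to $\max_r|k_r|$.

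\emph{Algebra property.} I would prove the algebra property first, through the convolution structure of products. We have $\widehat{fg}(\boldsymbol k)=\sum_{\boldsymbol l}\widehat f(\boldsymbol l)\widehat g(\boldsymbol k-\boldsymbol l)$. Use the elementary inequality
\[
(1+|\boldsymbol k|)^\alpha\le 2^\alpha\left\{(1+|\boldsymbol l|)^\alpha+(1+|\boldsymbol k-\boldsymbol l|)^\alpha\right\}.
\]
This splits $(1+|\boldsymbol k|)^\alpha|\widehat{fg}(\boldsymbol k)|$ into two convolutions. Each has the form (weighted $\ell^2$ sequence) $*$ (sequence $|\widehat g|$ or $|\widehat f|$). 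Because $\alpha>d/2$, Cauchy--Schwarz gives
\[
\sum_{\boldsymbol k}|\widehat g(\boldsymbol k)|\le\Big(\sum_{\boldsymbol k}(1+|\boldsymbol k|)^{-2\alpha}\Big)^{1/2}\lVert g\rVert_{\mathcal H^\alpha}<\infty ,
\]
so these sequences are in $\ell^1$. Young's inequality $\ell^2*\ell^1\subset\ell^2$ then yields
\[
\lVert fg\rVert_{\mathcal H^\alpha}\le C(\lVert f\rVert_{\mathcal H^\alpha}\lVert g\rVert_{\mathcal L^\infty_{\mathcal F}}+\lVert f\rVert_{\mathcal L^\infty_{\mathcal F}}\lVert g\rVert_{\mathcal H^\alpha})\le C'\lVert f\rVert_{\mathcal H^\alpha}\lVert g\rVert_{\mathcal H^\alpha},
\]
where $\lVert\cdot\rVert_{\mathcal L^\infty_{\mathcal F}}$ denotes the $\ell^1$ norm of the Fourier coefficients. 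The same $\ell^1$ bound gives the embedding $\mathcal H^\alpha\hookrightarrow C$, so the pointwise constraint $m\le p\le M$ makes sense.

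\emph{Composition.} This is the main obstacle, because iterating products only handles polynomial $F$. The range $[m,M]$ is compact, but arbitrary bounded Sobolev functions are not uniformly approximable by a single polynomial of controlled $\mathcal H^\alpha$ norm. I would therefore use a Moser-type estimate instead. For integer $\alpha=s$, apply Faà di Bruno: $\partial^\gamma(F\circ p)$ is a sum of terms $F^{(r)}(p)\prod_{i=1}^r\partial^{\gamma_i}p$ with $\sum\gamma_i=\gamma$. The factor $F^{(r)}(p)$ is bounded by $\sup_{[m,M]}|F^{(r)}|$. Each product is controlled in $\mathcal L^2$ by Gagliardo--Nirenberg,
\[
\lVert\partial^{\gamma_i}p\rVert_{\mathcal L^{2s/|\gamma_i|}}\lesssim\lVert p\rVert_{\mathcal L^\infty}^{1-|\gamma_i|/s}\lVert p\rVert_{\mathcal H^s}^{|\gamma_i|/s},
\]
together with Hölder's inequality. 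The result is $\lVert F\circ p\rVert_{\mathcal H^s}\le C(F,m,M)(1+\lVert p\rVert_{\mathcal H^s})^{s}$, which is bounded on bounded sets. For noninteger $\alpha$, write $\alpha=s+\sigma$ with $\sigma\in(0,1)$ and use the difference characterization of the $\mathcal H^\sigma$ seminorm applied to $\partial^{\gamma}(F\circ p)$, $|\gamma|=s$. Alternatively, and preferably for brevity, cite the composition theorem for Bessel-potential/Besov spaces in \citet[Section~5.3]{RunstSickel1996}; that theorem covers smooth $F$ and $\alpha>d/2$ on $\mathbb R^d$. I would localize it to the torus by multiplying a periodic extension by a smooth cutoff, which is permitted by the algebra property, and then restrict back.

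\emph{Square root.} Finally, take $F(y)=\sqrt y$, which is smooth on $(m/2,\infty)\supset[m,M]$ when $m>0$. This gives the stated consequence for $p\mapsto\sqrt p$. I expect the noninteger-$\alpha$ composition estimate to be the step where citing \citet{RunstSickel1996} is essential rather than re-derived.
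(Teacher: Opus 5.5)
Your Fourier-side proof of the algebra property (Peetre's inequality, Young's inequality $\ell^2*\ell^1\subset\ell^2$, and $\ell^1$-summability of the coefficients when $\alpha>d/2$) is correct. So is the Fa\`a di Bruno/Gagliardo--Nirenberg estimate for integer $\alpha$: only $F^{(r)}$ on $[m,M]$ enters, because $p$ is continuous with values in $[m,M]$, and mollification preserves that range, which justifies the chain rule.

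The gap is in the noninteger case. There the difference-quotient route is only sketched, so your argument rests on the composition theorems of \citet[Section~5.3]{RunstSickel1996}, and the localization you describe does not satisfy their hypotheses. Those theorems treat $G\circ f$ for $f\in\mathcal H^\alpha(\mathbb R^d)$ with $G$ smooth on all of $\mathbb R$ and $G(0)=0$. The latter condition is forced, since otherwise $G\circ f\notin\mathcal L^2(\mathbb R^d)$ for compactly supported $f$. After you multiply the periodic extension $\tilde p$ by a cutoff $\phi$, the function $\phi\tilde p$ takes every value between $0$ and the values of $p$, so it leaves the interval on which $F$ is assumed smooth. For general $F$, neither global smoothness nor $F(0)=0$ is available. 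In the case the paper actually needs, $F(y)=\sqrt y$ with $m>0$, $F$ is not differentiable at $0$, a value that $\phi\tilde p$ attains. So the cited theorem cannot be applied to $F(\phi\tilde p)$ as written.

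The missing step is a cutoff in the range variable, which the paper supplies. Take an open interval $I\supset[m,M]$ on which $F$ is smooth and $\chi\in C_c^\infty(I)$ with $\chi=1$ near $[m,M]$. Let $\widetilde F$ be $\chi F$ extended by zero, and apply the theorem to $G=\widetilde F-\widetilde F(0)$, which is smooth with bounded derivatives and vanishes at $0$. Where $\phi=1$ one has $G(\phi\tilde p)=F(\tilde p)-\widetilde F(0)$. Restricting back to the torus and adding the constant $\widetilde F(0)\in\mathcal H^\alpha$ then bounds $F\circ p$ uniformly over the bounded set.

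With this inserted, your noninteger argument is the paper's. Your algebra proof and integer-order Moser estimate are self-contained, more elementary replacements for what the paper obtains purely by citation: \citet[Theorem~4.6.4/1]{RunstSickel1996} for the algebra property, and Theorems~5.3.6/1--2 for composition at every $\alpha$.
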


\begin{proof}
The algebra assertion follows from \citet[Theorem~4.6.4/1]{RunstSickel1996}, with $p=q=2$. For composition, use their Theorem~5.3.6/1(i) when $0<\alpha<1$ and Theorem~5.3.6/2(i) when $\alpha\ge1$, choosing the auxiliary smoothness index $\mu>\alpha$ in the latter. Let $I$ be an open interval containing $[m,M]$ on which $F$ is smooth. Since $[m,M]$ is compactly contained in $I$, choose a smooth cutoff $\chi$ with compact support in $I$ that equals one on a neighborhood of $[m,M]$. Extending $\chi F$ by zero outside $I$ gives a smooth function $\widetilde F$ on $\mathbb R$ with compact support and bounded derivatives of every order. The function $\widetilde F-\widetilde F(0)$ vanishes at zero, as required by the cited composition results. The periodic versions follow by localization; restoring the constant gives $F\circ p$ because $\widetilde F=F$ on $[m,M]$ and constants belong to periodic $\mathcal H^\alpha$.
\end{proof}

\begin{theorem}[Assouad hypercube bound]
\label{thm:external-assouad}
Let $\{\mathrm P_\nu:\nu\in\{-1,1\}^m\}$ index parameters $\vartheta_\nu$ such that
\begin{equation}
  d^2(\vartheta_\nu,\vartheta_{\nu'})\ge a^2H(\nu,\nu'),
\end{equation}
where $H$ is Hamming distance and $\nu^{(j)}$ denotes the vector obtained from $\nu$ by reversing its $j$th coordinate. If $\mathrm{TV}(\mathrm P_\nu,\mathrm P_{\nu^{(j)}})\le\eta<1$ for every neighboring pair, then
\begin{equation}
  \inf_{\widehat\vartheta}\sup_\nu\mathrm E_\nu d^2(\widehat\vartheta,\vartheta_\nu)
  \ge \frac{ma^2}{8}(1-\eta).
\end{equation}
\end{theorem}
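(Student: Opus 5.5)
The plan is the standard Assouad argument. Fix a hypercube, bound the loss in each coordinate separately, and reduce each coordinate to a two-point testing problem.

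Let $\widehat\vartheta$ be any estimator and define $\widehat\nu\in\{-1,1\}^m$ by nearest-vertex projection, $\widehat\nu\in\arg\min_\nu d(\widehat\vartheta,\vartheta_\nu)$. The triangle inequality gives $d(\vartheta_{\widehat\nu},\vartheta_\nu)\le 2d(\widehat\vartheta,\vartheta_\nu)$. Hence
\[
d^2(\widehat\vartheta,\vartheta_\nu)\ge \tfrac14 d^2(\vartheta_{\widehat\nu},\vartheta_\nu)\ge \tfrac{a^2}{4}H(\widehat\nu,\nu).
\]
This is the step that fixes the constant. The bound is used in Hamming form, $H(\widehat\nu,\nu)=\sum_{j=1}^m\mathbf 1\{\widehat\nu_j\ne\nu_j\}$, which is the only place the squared-metric separation hypothesis enters.

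Next, bound the supremum over $\nu$ by the uniform average over the $2^m$ vertices. For each coordinate $j$, pair every $\nu$ with $\nu_j=1$ with its neighbour $\nu^{(j)}$. The averaged error in coordinate $j$ becomes $2^{-m}\sum_{\nu:\nu_j=1}\{\mathrm P_\nu(\widehat\nu_j\ne1)+\mathrm P_{\nu^{(j)}}(\widehat\nu_j\ne-1)\}$. For any test $\psi$ and measures $P,Q$, one has $P(\psi\ne1)+Q(\psi\ne-1)\ge 1-\mathrm{TV}(P,Q)\ge 1-\eta$. So the averaged error in each coordinate is at least $2^{-m}\cdot 2^{m-1}(1-\eta)=(1-\eta)/2$.

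Summing over $j$ gives
\[
\sup_\nu \mathrm E_\nu d^2(\widehat\vartheta,\vartheta_\nu)\ge \frac{a^2}{4}\cdot\frac{m(1-\eta)}{2}=\frac{ma^2}{8}(1-\eta),
\]
which is the claimed bound. There is no real obstacle here. The only points needing care are that the constant $1/8$ arises exactly as $\tfrac14\cdot\tfrac12$, and that the nearest-vertex minimizer is chosen measurably. The latter is immediate because the vertex set is finite: break ties by a fixed ordering. Alternatively, one can simply cite \citet[Theorem~2.12]{Tsybakov2009}, whose total-variation form is this exact statement.
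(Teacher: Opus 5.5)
Your proof is correct and is the standard Assouad argument behind the paper's proof, which consists solely of a citation to Tsybakov's Theorem~2.12. One small correction to your closing remark: that theorem bounds estimators of the vertex under Hamming loss by $\frac m2(1-\eta)$, so it is your nearest-vertex reduction, with its factor $\frac14$, that turns it into the squared-metric bound stated here, and the citation alone is not literally this exact statement.
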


\begin{proof}
This is Assouad's lemma in squared metric loss; see \citet[Theorem 2.12]{Tsybakov2009}.
\end{proof}

\begin{theorem}[Le Cam two-point lower bound]
\label{thm:external-lecam}
For two probability measures $\mathrm P_0,\mathrm P_1$ and parameter values $\theta_0,\theta_1$, every estimator satisfies
\begin{equation}
  \sup_{j=0,1}\mathrm E_jd^2(\widehat\theta,\theta_j)
  \ge\frac{d^2(\theta_0,\theta_1)}8\{1-\mathrm{TV}(\mathrm P_0,\mathrm P_1)\}.
\end{equation}
\end{theorem}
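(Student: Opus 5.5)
This is the classical two-point reduction of Le Cam, and I would prove it by turning estimation into testing. Let $\Delta=d(\theta_0,\theta_1)$. If $\Delta=0$ the bound is trivial, so I assume $\Delta>0$. Given any estimator $\widehat\theta$, I define the minimum-distance test
\begin{equation}
  \psi=\mathbf 1\{d(\widehat\theta,\theta_1)<d(\widehat\theta,\theta_0)\}.
\end{equation}
It is measurable because $\widehat\theta$ is measurable and $d$ is continuous.

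The first step is the triangle inequality. If $\psi=1$, then $\Delta\le d(\widehat\theta,\theta_0)+d(\widehat\theta,\theta_1)\le 2d(\widehat\theta,\theta_0)$. If $\psi=0$, the same argument gives $d(\widehat\theta,\theta_1)\ge\Delta/2$. Therefore
\begin{equation}
  \mathrm E_0d^2(\widehat\theta,\theta_0)\ge\frac{\Delta^2}{4}\mathrm P_0(\psi=1),
  \qquad
  \mathrm E_1d^2(\widehat\theta,\theta_1)\ge\frac{\Delta^2}{4}\mathrm P_1(\psi=0).
\end{equation}

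The second step bounds the maximum of the two risks by their average:
\begin{equation}
  \sup_{j=0,1}\mathrm E_jd^2(\widehat\theta,\theta_j)\ge\frac{\Delta^2}{8}\left\{\mathrm P_0(\psi=1)+\mathrm P_1(\psi=0)\right\}.
\end{equation}
Then I use the elementary identity for the sum of testing errors. For any measurable set $A$, $\mathrm P_0(A)+\mathrm P_1(A^c)=1-\{\mathrm P_1(A)-\mathrm P_0(A)\}\ge1-\mathrm{TV}(\mathrm P_0,\mathrm P_1)$, by the supremum definition of total variation in \eqref{eq:divergence-definitions}. Applying this with $A=\{\psi=1\}$ gives exactly the stated bound with constant $1/8$.

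There is no substantial obstacle in this argument. The only points to check are that the constant comes out as $1/8$, that ties are harmless (the case $d(\widehat\theta,\theta_1)=d(\widehat\theta,\theta_0)$ is covered by the $\psi=0$ branch, which still gives distance at least $\Delta/2$), and that the estimator may take values outside $\{\theta_0,\theta_1\}$. Alternatively, the result can be cited directly as Le Cam's two-point method, for example \citet[Section~2.3]{Tsybakov2009}.
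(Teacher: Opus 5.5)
Your argument is correct and is the standard reduction behind the paper's proof, which consists only of the citation \citet[Theorem~2.2]{Tsybakov2009}. That reduction uses the minimum-distance test with separation $s=d(\theta_0,\theta_1)/2$, the bound $\mathrm E_jd^2(\widehat\theta,\theta_j)\ge s^2\mathrm P_j(\psi\ne j)$, and $\max_j\mathrm P_j(\psi\ne j)\ge\{1-\mathrm{TV}(\mathrm P_0,\mathrm P_1)\}/2$, giving the same constant $1/8$; your write-up spells out these steps in self-contained form.
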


\begin{proof}
See \citet[Theorem 2.2]{Tsybakov2009}.
\end{proof}

\section{Proofs for the abstract results}
\label{app:meta-proofs}

\begin{proof}[Proof of Theorem~\ref{thm:meta-ideal}]
On $\mathcal E_n$, Conditions~\ref{ass:meta}(B2)--(B3) give
\begin{equation}
  D_n(X)\ge e^{-(C_\Pi+C_L)K_n}.
  \label{eq:proof-meta-denom}
\end{equation}
For $B_n(M)=\{f:d(f,f_0)>M\epsilon_n\}$, Conditions (B4)--(B5) and $L_n^{\mathrm{ABC}}\le1$ give
\begin{equation}
\begin{aligned}
 \int_{B_n(M)}L_n^{\mathrm{ABC}}(f;X)\Pi_n(\mathrm df)
 &\le e^{-C_D(M-C_S)^2K_n}+\Pi_n(\mathcal G_n^c)\\
 &\le e^{-C_D(M-C_S)^2K_n}
      +e^{-(C_\Pi+C_L+C_G)K_n}.
\end{aligned}
\end{equation}
After division by \eqref{eq:proof-meta-denom},
\begin{equation}
  \Pi_n^{\mathrm{ABC}}\{B_n(M)\mid X\}
  \le e^{-\{C_D(M-C_S)^2-C_\Pi-C_L\}K_n}+e^{-C_GK_n}.
\end{equation}
Choose $M_0\ge M_\star$ so that $C_D(M-C_S)^2-C_\Pi-C_L>0$ for every $M\ge M_0$, and set $\gamma_M=\min\{C_D(M-C_S)^2-C_\Pi-C_L,C_G\}$. On $\mathcal E_n^c$ the posterior probability is at most one. Adding $\mathrm P_{f_0}^{(n)}(\mathcal E_n^c)$ proves the expectation statement.
\end{proof}

\begin{proof}[Proof of Corollary~\ref{cor:meta-denom}]
The lower bound \eqref{eq:proof-meta-denom} holds on $\mathcal E_n$. Choose $\kappa>C_\Pi+C_L$ and $c'\in(0,c)$. Then $\mathrm P_{f_0}^{(n)}\{D_n(X)<e^{-\kappa K_n}\}\le\mathrm P_{f_0}^{(n)}(\mathcal E_n^c)\le e^{-cK_n}\le e^{-c'K_n}$ for all sufficiently large $n$.
\end{proof}

\begin{proof}[Proof of Theorem~\ref{thm:meta-mc}]
Conditional on $X$, the indicators are iid Bernoulli with success probability $D_n(X)$. Conditional on $X$ and $A_{n,N}=m\ge1$, the accepted parameters are iid with law $\Pi_n^{\mathrm{ABC}}(\cdot\mid X)$. Hence, for every measurable $B$,
\begin{equation}
  \mathrm E_{\mathrm{MC}}\{\widehat\Pi_{n,N}^{\mathrm{ABC}}(B\mid X)\mid X\}
  \le\Pi_n^{\mathrm{ABC}}(B\mid X)+(1-D_n(X))^{N_n}.
  \label{eq:proof-mc-mass}
\end{equation}
Corollary~\ref{cor:meta-denom} gives
\begin{equation}
  \sup_{f_0}\mathrm E_{f_0}^{(n)}(1-D_n(X))^{N_n}
  \le e^{-c'K_n}+\exp\{-N_n e^{-\kappa K_n}\}\to0.
\end{equation}
Apply Theorem~\ref{thm:meta-ideal} with $B=B_n(M)$.
\end{proof}

\begin{proof}[Proof of Corollary~\ref{cor:meta-mc-mean}]
Conditional on $X$ and $A_{n,N}=m\ge1$, let $F_1,\ldots,F_m$ be the accepted iid draws. Jensen's inequality gives
\begin{equation}
  \mathrm E_{\mathrm{MC}}\left\{d^2\left(m^{-1}\sum_{j=1}^mF_j,f_0\right)\mid X,A_{n,N}=m\right\}
  \le\int_{\Theta} d^2(f,f_0)\Pi_n^{\mathrm{ABC}}(\mathrm df\mid X).
\end{equation}
The assumed second moment gives Bochner integrability of the ideal posterior, while a finite empirical average is automatically defined on $A_{n,N}\ge1$. On $A_{n,N}=0$, the loss is bounded uniformly by \eqref{eq:meta-fallback-bounded}. Taking expectations, applying \eqref{eq:meta-ideal-second}, and using
\begin{equation}
  \sup_{f_0}\mathrm E_{f_0}^{(n)}\mathrm P_{\mathrm{MC}}(A_{n,N}=0\mid X)
  \le e^{-c'K_n}+\exp\{-N_n e^{-\kappa K_n}\}
\end{equation}
proves the result under \eqref{eq:meta-N-risk}.
\end{proof}

\section{Proofs for orthogonal-series regression with \texorpdfstring{$g$-and-$k$}{g-and-k} errors}
\label{app:gk-proofs}

\begin{lemma}[Sobolev projection bias]
\label{lem:reg-tail}
For $\theta_0\in\Theta_\alpha(R)$,
\begin{equation}
  \lVert\bm\theta_{0,>K}\rVert_{\ell^2}\le RK^{-\alpha}.
\end{equation}
\end{lemma}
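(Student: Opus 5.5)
The plan is a direct weighted-tail argument that uses only the ellipsoid constraint in \eqref{eq:reg-ellipsoid}. Recall that $\bm\theta_{0,>K}=(\theta_{0,K+1},\theta_{0,K+2},\ldots)$, so
\begin{equation}
  \lVert\bm\theta_{0,>K}\rVert_{\ell^2}^2=\sum_{j>K}\theta_{0j}^2 .
\end{equation}
For every $j\ge K+1$ we have $j>K\ge1$, and $\alpha>0$, so $j^{2\alpha}\ge K^{2\alpha}$. Equivalently, $1\le K^{-2\alpha}j^{2\alpha}$ on the tail index set. Insert this factor into each tail term and then enlarge the sum to all indices:
\begin{equation}
  \sum_{j>K}\theta_{0j}^2\le K^{-2\alpha}\sum_{j>K}j^{2\alpha}\theta_{0j}^2\le K^{-2\alpha}\sum_{j=1}^\infty j^{2\alpha}\theta_{0j}^2\le K^{-2\alpha}R^2 .
\end{equation}
The last inequality is exactly the defining constraint of $\Theta_\alpha(R)$. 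Every series involved has nonnegative terms, so the rearrangements and the monotone enlargement are legitimate. Taking square roots then gives $\lVert\bm\theta_{0,>K}\rVert_{\ell^2}\le RK^{-\alpha}$.

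I expect no genuine obstacle here. The only point to check is that the bound is uniform over $\theta_0\in\Theta_\alpha(R)$ and holds for every integer $K\ge1$. Both are immediate, because the constants are just $R$ and the exponent $\alpha$. This uniformity is what later feeds the bias term $K_n^{-\alpha}$ in $\epsilon_n$ from \eqref{eq:reg-rate}. That bias term enters when verifying the local-mass and separation conditions of Condition~\ref{ass:meta} in the proof of Theorem~\ref{thm:implicit-reg-contract}.
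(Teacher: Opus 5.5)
Your proposal is correct and follows essentially the same route as the paper, which also bounds $j^{2\alpha}\ge K^{2\alpha}$ for $j>K$ and then applies the ellipsoid constraint defining $\Theta_\alpha(R)$. You simply spell out the enlargement of the weighted tail sum to the full series.
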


\begin{proof}
For $j>K$, $j^{2\alpha}\ge K^{2\alpha}$, and the defining ellipsoid bound applies.
\end{proof}

\begin{lemma}[Fixed Gaussian prior local mass]
\label{lem:reg-prior-mass}
There are $B,\delta,C>0$ such that, uniformly over $\theta_0\in\Theta_\alpha(R)$,
\begin{equation}
  \Pi\left\{\lVert\bm\theta_K-\bm\theta_{0,K}\rVert_2\le\delta\epsilon_n,
  \ \lVert\bm\theta_{>K}\rVert_{\ell^2}\le B\epsilon_n\right\}
  \ge e^{-CK}
  \label{eq:reg-prior-local-mass}
\end{equation}
for $K=K_n$ and all sufficiently large $n$.
\end{lemma}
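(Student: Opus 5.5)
Conditioning on $\Lambda$ is the natural first step. Given $\Lambda=\lambda\in[\lambda_-,\lambda_+]$, the head $\bm\theta_K$ and tail $\bm\theta_{>K}$ are independent, so the conditional probability of the event in \eqref{eq:reg-prior-local-mass} factorizes into a head small-ball probability times a tail probability. Since $\nu([\lambda_-,\lambda_+])=1$, it is enough to bound each factor from below uniformly in $\lambda\in[\lambda_-,\lambda_+]$, and then integrate over $\nu$.

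For the tail factor, $\lVert\bm\theta_{>K}\rVert_{\ell^2}^2=\lambda^2\sum_{j>K}j^{-2\alpha-1}G_j^2$, whose mean is at most $\lambda_+^2\sum_{j>K}j^{-2\alpha-1}\lesssim K^{-2\alpha}\lesssim\epsilon_n^2$. Markov's inequality then gives probability at least $1/2$ once $B$ is a large enough constant. Lemma~\ref{lem:weighted-chi-square} would give an exponential bound, but it is not needed here.

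For the head factor, the coordinates $\theta_j=\lambda j^{-\alpha-1/2}G_j$ for $j\le K$ are independent. The plan is to shrink the Euclidean ball to a product box: if $|\theta_j-\theta_{0j}|\le\delta\epsilon_n/\sqrt K$ for every $j\le K$, then $\lVert\bm\theta_K-\bm\theta_{0,K}\rVert_2\le\delta\epsilon_n$. Put $\sigma_j=\lambda j^{-\alpha-1/2}$ and $h=\delta\epsilon_n/\sqrt K$. Each coordinate probability is at least $(2h/\sigma_j)\,\varphi\{(|\theta_{0j}|+h)/\sigma_j\}$ whenever $h\le\sigma_j$. The width ratio uses $\epsilon_n\ge\sqrt{K/n}$, so $h/\sigma_j\ge\delta\lambda_+^{-1} n^{-1/2}j^{\alpha+1/2}$. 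For $j\le K\asymp n^{1/(2\alpha+1)}$ this ratio is at most a constant, and it is at least of order $n^{-1/2}$. Hence $\log(2h/\sigma_j)\ge -C\log n$ uniformly, and when $h>\sigma_j$ one simply replaces the width by $\sigma_j$, giving a constant lower bound.

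The main obstacle is that multiplying $K$ factors of size $n^{-1/2}$ gives only $e^{-CK\log n}$, which loses a logarithm. To avoid this, I would use the exact scaling: $\sum_{j\le K}\log(h/\sigma_j)=K\log(\delta/\lambda)+\tfrac12K\log(\epsilon_n^2/K)+(\alpha+\tfrac12)\sum_{j\le K}\log j$. Since $\sum_{j\le K}\log j=K\log K-K+O(\log K)$ and $\epsilon_n^2\asymp K^{-2\alpha}$, the $\log K$ terms cancel exactly: $-\tfrac12(2\alpha+1)K\log K+(\alpha+\tfrac12)K\log K=0$. What remains is $-CK$. So it is the sum, not a termwise bound, that delivers $e^{-CK}$.

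The Gaussian exponent contributes $\sum_{j\le K}(|\theta_{0j}|+h)^2/(2\sigma_j^2)\lesssim\lambda_-^{-2}\sum_{j\le K}j^{2\alpha+1}\theta_{0j}^2+Kh^2\sum_{j\le K}j^{2\alpha+1}$. The first term is at most $K\sum_jj^{2\alpha}\theta_{0j}^2\le KR^2$. The second is of order $\epsilon_n^2K^{2\alpha+2}\asymp K^2$, which is too large. The remedy is to recenter only where $h\le\sigma_j$. For the indices with $h>\sigma_j$, I would instead bound $|\theta_{0j}|\le Rj^{-\alpha}$ and use a box of width $\sigma_j$ around $\theta_{0j}$, with the window chosen so that it still sits within $h$ of $\theta_{0j}$. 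For those indices $h^2/\sigma_j^2$ is replaced by a constant, and in the remaining regime $Kh^2\sigma_j^{-2}\lesssim K\cdot1$. With this refinement both exponent terms are $O(K)$, uniformly over $\theta_0$ and $\lambda$. Multiplying the two factors and integrating over $\nu$ yields \eqref{eq:reg-prior-local-mass}.
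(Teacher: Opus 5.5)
Your proof is correct and follows essentially the paper's route. You condition on $\Lambda$, factor the head and tail, and handle the tail by Markov's inequality. You then lower-bound the head by a Gaussian density bound times a volume, with the $K\log K$ terms cancelling exactly. Using the inscribed cube of half-side $\delta\epsilon_n/\sqrt K$ instead of the Euclidean ball costs only a factor $e^{O(K)}$ in volume, so that variation is harmless.

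The one flaw is the ``obstacle'' in your last paragraph, which comes from a spurious factor of $K$. The second exponent term is $\lambda^{-2}h^2\sum_{j\le K}j^{2\alpha+1}\asymp\delta^2\epsilon_n^2K^{2\alpha+1}\asymp K$, not $K^2$. You had in fact already shown $h/\sigma_j\le C$ for every $j\le K$, which gives $\sum_{j\le K}h^2/\sigma_j^2\le C^2K$ directly. This is the paper's estimate $\sum_{j\le K}j^{2\alpha+1}h_j^2\le K^{2\alpha+1}\delta^2\epsilon_n^2=O(K)$. The case split according to whether $h\le\sigma_j$ is therefore unnecessary, though it does no harm.
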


\begin{proof}
Condition on $\Lambda$, and write $\mathrm P_\Lambda$ and $\mathrm E_\Lambda$ for the corresponding conditional probability and expectation. The first $K$ coordinates have standard deviations $\sigma_j=\Lambda j^{-\alpha-1/2}$. If $u=\bm\theta_{0,K}+h$ and $\lVert h\rVert_2\le\delta\epsilon_n$, then, uniformly in $\Lambda\in[\lambda_-,\lambda_+]$,
\begin{equation}
\begin{aligned}
  \sum_{j=1}^K\frac{u_j^2}{\sigma_j^2}
  &\le \frac{2}{\lambda_-^2}\sum_{j=1}^K j^{2\alpha+1}\theta_{0j}^2
     +\frac{2}{\lambda_-^2}\sum_{j=1}^K j^{2\alpha+1}h_j^2\\
  &\le \frac{2KR^2}{\lambda_-^2}
     +\frac{2K^{2\alpha+1}\delta^2\epsilon_n^2}{\lambda_-^2}
  \le CK.
\end{aligned}
\end{equation}
Here the last inequality uses $K^{2\alpha+1}\epsilon_n^2=O(K)$. The Gaussian density lower bound, the $K$-ball volume formula, and Stirling's inequality give
\begin{equation}
\begin{aligned}
 \log\mathrm P_\Lambda\{\lVert\bm\theta_K-\bm\theta_{0,K}\rVert_2\le\delta\epsilon_n\}
 &\ge -CK+\left(\alpha+\frac12\right)\sum_{j=1}^K\log j\\
 &\quad+K\log(\delta\epsilon_n)-\frac K2\log K-C K.
\end{aligned}
\end{equation}
Since $\epsilon_n\asymp K^{-\alpha}$ and $\sum_{j=1}^K\log j=K\log K-K+O(\log K)$, all $K\log K$ terms cancel, leaving a lower bound $-CK$, uniformly in $\Lambda\in[\lambda_-,\lambda_+]$.

The tail is independent of the first $K$ coordinates conditional on $\Lambda$, and
\begin{equation}
  \mathrm E_\Lambda\lVert\bm\theta_{>K}\rVert_{\ell^2}^2
  =\Lambda^2\sum_{j=K+1}^{\infty}j^{-2\alpha-1}\le CK^{-2\alpha}\le C\epsilon_n^2.
\end{equation}
Choosing $B$ large, Markov's inequality gives $\mathrm P_\Lambda\{\lVert\bm\theta_{>K}\rVert_{\ell^2}\le B\epsilon_n\}\ge1/2$. Integrating over the scale hyperprior proves the claim.
\end{proof}

\begin{lemma}[Exponentially likely Gaussian prior tail sieves]
\label{lem:reg-prior-sieve}
For every $L>0$, a constant $B_1<\infty$ can be chosen so that
\begin{equation}
  \mathcal G_n=\left\{\theta:\lVert\bm\theta_{>K_n}\rVert_{\ell^2}\le B_1\epsilon_n\right\}
  \label{eq:reg-prior-sieve}
\end{equation}
satisfies
\begin{equation}
  \Pi(\mathcal G_n^c)\le e^{-LK_n}
  \label{eq:reg-prior-sieve-tail}
\end{equation}
for all sufficiently large $n$.
\end{lemma}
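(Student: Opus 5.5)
We condition on the scale $\Lambda$ and apply Lemma~\ref{lem:weighted-chi-square} to the tail norm. Conditional on $\Lambda$, we have
\begin{equation}
  \lVert\bm\theta_{>K_n}\rVert_{\ell^2}^2=\sum_{j>K_n}a_jG_j^2,
  \qquad a_j=\Lambda^2j^{-2\alpha-1}.
\end{equation}
The weights $a_j$ are nonnegative and summable. Uniformly over $\Lambda\in[\lambda_-,\lambda_+]$, they satisfy
\begin{equation}
  \sum_{j>K_n}a_j\le C_1K_n^{-2\alpha},\qquad
  A_2\le\Bigl(\lambda_+^4\sum_{j>K_n}j^{-4\alpha-2}\Bigr)^{1/2}\le C_2K_n^{-2\alpha-1/2},\qquad
  A_\infty\le\lambda_+^2K_n^{-2\alpha-1}.
\end{equation}
These follow from integral comparison of the tails $\sum_{j>K}j^{-s}\lesssim K^{1-s}$.

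Next we take $t=LK_n$ in \eqref{eq:weighted-chi-square}. This gives, with conditional probability at least $1-e^{-LK_n}$,
\begin{equation}
  \lVert\bm\theta_{>K_n}\rVert_{\ell^2}^2\le C_1K_n^{-2\alpha}+2C_2K_n^{-2\alpha-1/2}\sqrt{LK_n}+2\lambda_+^2K_n^{-2\alpha-1}LK_n
  \le C_3(1+\sqrt L+L)K_n^{-2\alpha}.
\end{equation}
The key point is that both deviation terms scale like $K_n^{-2\alpha}$. For the $A_2$ term, $K_n^{-1/2}\sqrt{K_n}=1$. For the $A_\infty$ term, $K_n^{-1}K_n=1$. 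The exponent $t=LK_n$ is therefore paid without inflating the radius beyond order $K_n^{-\alpha}$.

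Since $\epsilon_n\ge K_n^{-\alpha}$ by \eqref{eq:reg-rate}, we can choose $B_1^2=C_3(1+\sqrt L+L)$. Then the conditional probability of $\mathcal G_n^c$ is at most $e^{-LK_n}$, uniformly in $\Lambda$. Integrating over $\nu$, which is supported on $[\lambda_-,\lambda_+]$, gives \eqref{eq:reg-prior-sieve-tail}.

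The only step requiring care is the uniform tail-sum bounds together with the centring: Lemma~\ref{lem:weighted-chi-square} controls $\sum a_j(G_j^2-1)$, so the mean $\sum a_j$ must be added back. This is routine. Measurability of $\mathcal G_n$ is immediate because the tail norm is a countable supremum of measurable partial sums.
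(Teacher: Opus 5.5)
Your proposal is correct and matches the paper's proof essentially step for step. You condition on $\Lambda$ and apply Lemma~\ref{lem:weighted-chi-square} to the tail weights $\Lambda^2 j^{-2\alpha-1}$, $j>K_n$, with $t=LK_n$; then you use that the sum, $\ell^2$ norm and maximum of these weights scale like $K_n^{-2\alpha}$, $K_n^{-2\alpha-1/2}$ and $K_n^{-2\alpha-1}$, absorb the resulting bound into $B_1\epsilon_n$ via $\epsilon_n\ge K_n^{-\alpha}$, and integrate over the compactly supported hyperprior.
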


\begin{proof}
Put $K=K_n$ and condition on $\Lambda$. Apply Lemma~\ref{lem:weighted-chi-square} to the tail weights $a_j=\Lambda^2j^{-2\alpha-1}$, $j>K$. Their sum, $\ell^2$ norm, and maximum are bounded by constant multiples of $K^{-2\alpha}$, $K^{-2\alpha-1/2}$, and $K^{-2\alpha-1}$, uniformly in $\Lambda\in[\lambda_-,\lambda_+]$. Taking $t=LK$ gives
\begin{equation}
  \mathrm P_\Lambda\left\{\lVert\bm\theta_{>K}\rVert_{\ell^2}^2>
  C(1+\sqrt L+L)K^{-2\alpha}\right\}\le e^{-LK}.
\end{equation}
Since $\epsilon_n\ge K^{-\alpha}$, choose $B_1^2\ge C(1+\sqrt L+L)$ and integrate over the scale hyperprior.
\end{proof}

\begin{proof}[Proof of Proposition~\ref{prop:implicit-error}]
Write $t=gz/2$ and $r(t)=\tanh t+t\operatorname{sech}^2t$. The bounds $|\tanh t|\le1$ and $|t|\operatorname{sech}^2t\le1/2$ give $|r(t)|\le3/2$. Consequently,
\begin{equation}
  h_{g,c}'(z)=1+c r(gz/2)\in[1-3c/2,1+3c/2]\subset[1/4,7/4].
\end{equation}
Thus $h_{g,c}$ is a smooth increasing bijection. Its second and third derivatives are bounded because derivatives of $\tanh$ are bounded and the terms containing $t$ are multiplied by powers of $\operatorname{sech}t$. The definition of $\mu_{g,c}$ centers $h_{g,c}(G)$. Since $h_{g,c}$ is Lipschitz, Gaussian concentration \citep[Theorem~5.2.2]{Vershynin2018} implies that $h_{g,c}(G)$ is sub-Gaussian. The change-of-variables formula gives \eqref{eq:implicit-error-density}.

Let $\ell=\log q_{g,c}$. Writing $x=h_{g,c}(z)$,
\begin{equation}
  \ell'(x)=\frac{-z-h_{g,c}''(z)/h_{g,c}'(z)}{h_{g,c}'(z)}.
\end{equation}
Writing $a_1=h_{g,c}'$, $a_2=h_{g,c}''$, and $a_3=h_{g,c}'''$, differentiation gives
\begin{equation}
  \ell''\{h_{g,c}(z)\}
  =-a_1(z)^{-2}+z a_2(z)a_1(z)^{-3}-a_3(z)a_1(z)^{-3}+2a_2(z)^2a_1(z)^{-4}.
\end{equation}
The derivative lower bound and the boundedness of $a_2$ and $a_3$ therefore imply
\begin{equation}
  |\ell''\{h_{g,c}(z)\}|\le C(1+|z|).
  \label{eq:implicit-log-density-second}
\end{equation}
Moreover, $|h_{g,c}^{-1}(h_{g,c}(z)+t)-z|\le4|t|$. Taylor's theorem and \eqref{eq:implicit-log-density-second} therefore give, uniformly for $|t|\le t_0$,
\begin{equation}
\begin{aligned}
 \mathrm{KL}\{q_{g,c},q_{g,c}(\cdot-t)\}
 &=\int_{\mathbb R} q_{g,c}(x)\{\ell(x)-\ell(x-t)\}\,\mathrm dx\\
 &\le |t|\left|\int_{\mathbb R} q_{g,c}(x)\ell'(x)\,\mathrm dx\right|+Ct^2\mathrm E(1+|G|).
\end{aligned}
\end{equation}
The preceding bounds imply that $q_{g,c}(x)\ell'(x)=q_{g,c}'(x)$ is integrable. Since $q_{g,c}(x)\to0$ as $x\to\pm\infty$,
\begin{equation}
  \int_{\mathbb R}q_{g,c}(x)\ell'(x)\,\mathrm dx
  =\int_{\mathbb R}q_{g,c}'(x)\,\mathrm dx=0.
\end{equation}
Translation invariance yields \eqref{eq:implicit-shift-kl}.
\end{proof}

\begin{lemma}[Orthogonal-regression summary concentration]
\label{lem:implicit-summary}
For $1\le K\le n$, define $S_{n,K}$ by \eqref{eq:implicit-reg-summary} with $K_n$ replaced by $K$. For the observed or synthetic regression summary, constants $c,B_0>0$ satisfy
\begin{equation}
  \mathrm P\left\{\lVert S_{n,K}(X)-\bm\theta_K\rVert_2>B\sqrt{K/n}\right\}\le e^{-cB^2K},
  \qquad B\ge B_0,
  \label{eq:implicit-summary-concentration}
\end{equation}
and
\begin{equation}
  \mathrm P\left\{\lVert S_{n,K}(X)-\bm\theta_K\rVert_2\le B_0\sqrt{K/n}\right\}\ge\frac12.
  \label{eq:implicit-summary-small-ball}
\end{equation}
\end{lemma}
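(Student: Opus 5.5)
The plan is to write the centered summary as a linear form in the iid errors and apply Lemma~\ref{lem:directional-subgaussian} and Corollary~\ref{cor:directional-small-ball} with $\sigma\asymp n^{-1/2}$. Under $\mathrm P_\theta^{(n)}$, whether for observed or synthetic data, \eqref{eq:implicit-regression-model} and the orthogonality relation \eqref{eq:dct-orthogonality} give, for $j\le K\le n$,
\begin{equation}
  V_j:=S_{n,K}(X)_j-\theta_j=\frac1n\sum_{i=1}^n\phi_{j,n}(i)\varepsilon_i .
\end{equation}
The signal is exact here, since the model uses only coefficients $1,\ldots,n$ and orthogonality holds for all $j,k\le n$. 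So $V=(V_1,\ldots,V_K)$ is centered and does not depend on $\theta$. Consequently the bounds will be uniform over every $\theta$, which covers both the observed summary at $\theta_0$ and the synthetic summary at any prior draw.

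The first step is directional sub-Gaussianity. For $u\in\mathbb S^{K-1}$,
\begin{equation}
  u^\top V=\frac1n\sum_{i=1}^n w_i\varepsilon_i,\qquad w_i=\sum_{j=1}^K u_j\phi_{j,n}(i).
\end{equation}
By \eqref{eq:dct-orthogonality}, $\sum_i w_i^2=n\lVert u\rVert_2^2=n$. Proposition~\ref{prop:implicit-error} says the $\varepsilon_i$ are iid, centered and sub-Gaussian with a fixed $\psi_2$ norm $\kappa_\varepsilon$. Hoeffding's inequality for sums of independent centered sub-Gaussian variables then gives $\lVert u^\top V\rVert_{\psi_2}\le C\kappa_\varepsilon n^{-1}(\sum_i w_i^2)^{1/2}=C\kappa_\varepsilon n^{-1/2}=:\sigma$. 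Lemma~\ref{lem:directional-subgaussian} yields $\mathrm P\{\lVert V\rVert_2>B\sigma\sqrt K\}\le e^{-cB^2K}$ for $B$ beyond a universal threshold. Rescaling $B$ by the constant $C\kappa_\varepsilon$ gives \eqref{eq:implicit-summary-concentration}, with adjusted $c$ and $B_0$.

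The second step is the small-ball bound. Using the variance $s^2=\mathrm E\varepsilon^2$ and orthogonality again,
\begin{equation}
  \mathrm E\lVert V\rVert_2^2=\sum_{j\le K}n^{-2}\sum_i\phi_{j,n}(i)^2s^2=Ks^2/n .
\end{equation}
Since $s^2\lesssim\kappa_\varepsilon^2$, this is at most $C_0K\sigma^2$. Corollary~\ref{cor:directional-small-ball} gives probability at least $1/2$ for $\lVert V\rVert_2\le B_1\sigma\sqrt K$. Enlarging $B_0$ to dominate both constants makes \eqref{eq:implicit-summary-small-ball} hold with the same $B_0$ as in \eqref{eq:implicit-summary-concentration}. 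That is legitimate because the tail bound remains valid for all larger $B$.

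The only point needing care is the reduction of $u^\top V$ to a weighted error sum whose $\ell^2$ weight norm is exactly $\sqrt n$. This is where orthogonality of the discrete cosine design is essential; the bound $|\phi_{j,n}|\le\sqrt2$ alone would lose a factor of $\sqrt K$. Once that reduction is in place, the remaining steps are direct invocations of the appendix lemmas.
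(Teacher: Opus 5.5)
Your proposal is correct and follows the paper's proof. Both write the centered summary as $V=n^{-1}\sum_i a_i\varepsilon_i$ and use DCT orthogonality to get $\sum_i (u^\top a_i)^2=n$. Both apply the independent sub-Gaussian sum bound to get $\lVert u^\top V\rVert_{\psi_2}\lesssim n^{-1/2}$, invoke Lemma~\ref{lem:directional-subgaussian}, and obtain the small-ball bound from $\mathrm E\lVert V\rVert_2^2=\mathrm{Var}(\varepsilon)K/n$ via Markov's inequality (your Corollary~\ref{cor:directional-small-ball}), after enlarging $B_0$.
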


\begin{proof}
Put $a_i=(\phi_{1,n}(i),\ldots,\phi_{K,n}(i))^\top$ and $V=n^{-1}\sum_{i=1}^n a_i\varepsilon_i$. For $u\in\mathbb S^{K-1}$, orthogonality gives
\begin{equation}
  \sum_{i=1}^n(u^\top a_i)^2=n.
\end{equation}
The independent sub-Gaussian sum inequality \citep[Proposition~2.6.1]{Vershynin2018} therefore yields $\lVert u^\top V\rVert_{\psi_2}\le Cn^{-1/2}$ uniformly in $u$. Lemma~\ref{lem:directional-subgaussian}, with $\sigma=Cn^{-1/2}$, proves \eqref{eq:implicit-summary-concentration}. Orthogonality also gives $\mathrm E\lVert V\rVert_2^2=\mathrm{Var}(\varepsilon)K/n$, so Markov's inequality proves \eqref{eq:implicit-summary-small-ball} after increasing $B_0$.
\end{proof}

\begin{proof}[Proof of Theorem~\ref{thm:implicit-reg-contract}]
Let $K=K_n$, fix $B_X\ge B_0$ as in Lemma~\ref{lem:implicit-summary}, and define
\begin{equation}
  \mathcal E_n=\left\{\lVert S_{n,K}(X)-\bm\theta_{0,K}\rVert_2
  \le B_X\sqrt{K/n}\right\}.
\end{equation}
Lemma~\ref{lem:implicit-summary} gives Condition~\ref{ass:meta}(B1). Use the local set in Lemma~\ref{lem:reg-prior-mass}. Its prior mass is at least $e^{-C_\Pi K}$, and Lemma~\ref{lem:reg-tail} gives
\begin{equation}
  \lVert\theta-\theta_0\rVert_{\ell^2}
  \le\delta\epsilon_n+B\epsilon_n+RK^{-\alpha}
  \le(\delta+B+R)\epsilon_n
\end{equation}
throughout that set. On $\mathcal E_n$, the synthetic event
\begin{equation}
  \lVert S_{n,K}(Z)-\bm\theta_K\rVert_2\le B_0\sqrt{K/n}
\end{equation}
implies acceptance whenever $A>B_X+\delta+B_0$. Its probability is at least $1/2$, uniformly over the local set. Thus (B2)--(B3) hold with a sufficiently large constant $C_L$.

Choose the tail sieve of Lemma~\ref{lem:reg-prior-sieve} with $L>C_\Pi+C_L+2$. For $\theta\in\mathcal G_n$ such that $\lVert\theta-\theta_0\rVert_{\ell^2}>M\epsilon_n$, the head coordinates satisfy
\begin{equation}
  \lVert\bm\theta_K-\bm\theta_{0,K}\rVert_2
  >(M-B_1-R)\epsilon_n.
\end{equation}
On $\mathcal E_n$, acceptance therefore forces
\begin{equation}
  \lVert S_{n,K}(Z)-\bm\theta_K\rVert_2
  >(M-B_1-R-B_X-A)\epsilon_n.
\end{equation}
For every sufficiently large fixed $M$, Lemma~\ref{lem:implicit-summary} and $\epsilon_n\ge\sqrt{K/n}$ bound this probability by $\exp\{-c(M-B_1-R-B_X-A)^2K\}$, which is (B4). The tail-sieve probability is (B5). Theorem~\ref{thm:meta-ideal}, together with $\epsilon_n\asymp\rho_{n,\alpha}$, proves the result.
\end{proof}

\begin{proof}[Proof of Corollary~\ref{cor:implicit-reg-risk}]
For all sufficiently large $n$, the target class lies in the ball of radius $r_n$. Metric projection onto this ball is nonexpansive, so $\lVert \mathcal C_n(\theta)-\theta_0\rVert_{\ell^2}\le\lVert\theta-\theta_0\rVert_{\ell^2}$. Fix $M\ge M_0$. By \eqref{eq:meta-ideal-on-event} and the proof of Theorem~\ref{thm:implicit-reg-contract}, on $\mathcal E_n$,
\begin{equation}
\begin{aligned}
 \int_{\ell^2}\lVert \mathcal C_n(\theta)-\theta_0\rVert_{\ell^2}^2\Pi_{n,\mathrm{reg}}^{\mathrm{ABC}}(\mathrm d\theta\mid X)
 &\le M^2\epsilon_n^2+(r_n+R)^2\,2e^{-\gamma_MK_n}.
\end{aligned}
\end{equation}
On the complement, the integral is at most $(r_n+R)^2$ and the complement probability is at most $e^{-cK_n}$. Since $r_n^2=K_n$ and polynomial factors are dominated by the exponential terms, expectation is $O(\epsilon_n^2)$. Jensen's inequality proves the result.
\end{proof}

\begin{proof}[Proof of Theorem~\ref{thm:implicit-reg-lower}]
Let $K=\lfloor c_K n^{1/(2\alpha+1)}\rfloor$ and use the coordinate block $\mathcal J=\{K+1,\ldots,2K\}$, which is contained in $\{1,\ldots,n\}$ for large $n$. For $\nu\in\{-1,1\}^{\mathcal J}$, set
\begin{equation}
  \theta_{\nu,j}=\begin{cases}a_n\nu_j,&j\in\mathcal J,\\0,&j\notin\mathcal J,\end{cases}
  \qquad a_n=c_a n^{-1/2}.
\end{equation}
Choose fixed $c_K,c_a>0$ with $2^{2\alpha}c_a^2c_K^{2\alpha+1}\le R^2$. Then every vertex belongs to $\Theta_\alpha(R)$, because
\begin{equation}
\begin{aligned}
  \sum_{j=1}^{\infty}j^{2\alpha}\theta_{\nu,j}^2
  &\le (2K)^{2\alpha}K a_n^2
   =2^{2\alpha}c_a^2K^{2\alpha+1}/n\\
  &\le 2^{2\alpha}c_a^2c_K^{2\alpha+1}\le R^2.
\end{aligned}
\end{equation}
For all vertices, the coordinate construction gives
\begin{equation}
  \lVert\theta_\nu-\theta_{\nu'}\rVert_{\ell^2}^2
  =4a_n^2H(\nu,\nu'),
\end{equation}
where $H$ is Hamming distance. Neighboring vertices therefore have squared $\ell^2$ distance $4a_n^2$. Their regression mean vectors differ by $2a_n\{\phi_{j,n}(i)\}_{i=1}^n$. Since $|\phi_{j,n}(i)|\le\sqrt2$, the individual shifts lie in the range of Proposition~\ref{prop:implicit-error} for large $n$, and
\begin{equation}
  \mathrm{KL}(\mathrm P_\nu^{(n)},\mathrm P_{\nu^{(j)}}^{(n)})
  \le C_q\sum_{i=1}^n4a_n^2\phi_{j,n}(i)^2
  =4C_qna_n^2.
\end{equation}
Reducing $c_a$ if necessary, take $c_a\le(8C_q)^{-1/2}$; this preserves the ellipsoid inclusion. Pinsker's inequality now gives, for every neighboring pair,
\begin{equation}
\begin{aligned}
  \mathrm{TV}(\mathrm P_\nu^{(n)},\mathrm P_{\nu^{(j)}}^{(n)})
  &\le\left\{\frac12\mathrm{KL}(\mathrm P_\nu^{(n)},\mathrm P_{\nu^{(j)}}^{(n)})\right\}^{1/2}\\
  &\le\sqrt{2C_qn a_n^2}
   =\sqrt{2C_q}\,c_a\le\frac12.
\end{aligned}
\end{equation}
Theorem~\ref{thm:external-assouad} applies with hypercube dimension $K$, squared separation parameter $4a_n^2$, and $\eta=1/2$. Since all vertices lie in the target ellipsoid,
\begin{equation}
\begin{aligned}
  &\inf_{\widetilde\theta_n}\sup_{\theta_0\in\Theta_\alpha(R)}
     \mathrm E_{\theta_0}^{(n)}
     \lVert\widetilde\theta_n-\theta_0\rVert_{\ell^2}^2\\
  &\quad\ge\inf_{\widetilde\theta_n}\max_{\nu\in\{-1,1\}^{\mathcal J}}
     \mathrm E_\nu^{(n)}
     \lVert\widetilde\theta_n-\theta_\nu\rVert_{\ell^2}^2\\
  &\quad\ge\frac{K(4a_n^2)}8\left(1-\frac12\right)
   =\frac{c_a^2K}{4n}.
\end{aligned}
\end{equation}
Both infima are over measurable estimators based on the regression sample. For all sufficiently large $n$, $c_Kn^{1/(2\alpha+1)}\ge2$, so
\begin{equation}
  K=\left\lfloor c_Kn^{1/(2\alpha+1)}\right\rfloor
  \ge\frac{c_K}{2}n^{1/(2\alpha+1)}.
\end{equation}
Consequently,
\begin{equation}
  \inf_{\widetilde\theta_n}\sup_{\theta_0\in\Theta_\alpha(R)}
  \mathrm E_{\theta_0}^{(n)}
  \lVert\widetilde\theta_n-\theta_0\rVert_{\ell^2}^2
  \ge\frac{c_a^2c_K}{8}n^{-2\alpha/(2\alpha+1)},
\end{equation}
which proves the stated lower bound.
\end{proof}

\begin{proof}[Proof of Corollary~\ref{cor:implicit-reg-mc}]
The posterior-mass assertion follows from Theorem~\ref{thm:meta-mc}.
\end{proof}

\begin{proof}[Proof of Corollary~\ref{cor:implicit-reg-mc-risk}]
Condition on the observed data $X$. Given $A_{n,N}=m\ge1$, the $m$ accepted parameter draws are independent with common law $\Pi_{n,\mathrm{reg}}^{\mathrm{ABC}}(\cdot\mid X)$. Since $\mathcal C_n$ is measurable, Jensen's inequality gives
\begin{equation}
\begin{aligned}
 &\mathrm E_{\mathrm{MC}}\left[\left.\left\lVert
   m^{-1}\sum_{\substack{1\le i\le N_n\\ I_i=1}}\mathcal C_n(\theta_i)-\theta_0
   \right\rVert_{\ell^2}^2\,\right|X,A_{n,N}=m\right]\\
 &\qquad\le
 \int_{\ell^2}\lVert \mathcal C_n(\theta)-\theta_0\rVert_{\ell^2}^2
 \Pi_{n,\mathrm{reg}}^{\mathrm{ABC}}(\mathrm d\theta\mid X).
\end{aligned}
\end{equation}
The proof of Corollary~\ref{cor:implicit-reg-risk} shows that the expectation of the right-hand side, uniformly over $\theta_0\in\Theta_\alpha(R)$, is $O(\epsilon_n^2)$. On $\{A_{n,N}=0\}$, the empirical estimator equals the fallback value $0$, and hence its squared loss is at most $R^2$. Corollary~\ref{cor:meta-denom} and the conditional binomial formula yield
\begin{equation}
\begin{aligned}
 \sup_{\theta_0\in\Theta_\alpha(R)}
 \mathrm E_{\theta_0}^{(n)}
 \mathrm P_{\mathrm{MC}}(A_{n,N}=0\mid X)
 &\le e^{-c'K_n}+\exp\left\{-N_n e^{-\kappa K_n}\right\}.
\end{aligned}
\end{equation}
Because $K_n\asymp n\epsilon_n^2$ diverges polynomially, the first term is $O(\epsilon_n^2)$. The assumed lower bound on $N_n e^{-\kappa K_n}$, with $C_N$ sufficiently large, makes the second term $O(\epsilon_n^2)$. Combining the positive-acceptance and no-acceptance contributions proves the claim.
\end{proof}

\section{Proofs for nonparametric decompounding}
\label{app:density-proofs}

\begin{lemma}[Trigonometric projection bounds]
\label{lem:density-projection}
Uniformly over $p_0\in\mathcal P_\alpha(R,m,M)$, the function $g_0$ in \eqref{eq:density-target-square-root} satisfies, for every integer $J\ge1$,
\begin{equation}
  \lVert g_0\rVert_{\mathcal H^\alpha}\le C,
  \qquad
  \lVert g_0-g_{0,J}\rVert_2\le CJ^{-\alpha},
  \qquad
  \lVert g_0-g_{0,J}\rVert_\infty\le CJ^{d/2-\alpha},
  \label{eq:density-projection-bounds}
\end{equation}
and $\sup_J\lVert g_{0,J}\rVert_\infty\le C$.
\end{lemma}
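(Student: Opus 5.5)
We prove the four bounds in order. Each follows from the Sobolev regularity of $\sqrt{p_0}$, which Theorem~\ref{thm:sobolev-composition} supplies, combined with the block-ordered coefficient description of periodic $\mathcal H^\alpha$.

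\emph{Step 1: $\mathcal H^\alpha$ bound on $g_0$.} For $p_0\in\mathcal P_\alpha(R,m,M)$, the weighted coefficient sum together with $\theta_1(p_0)=1$ bounds $\lVert p_0\rVert_{\mathcal H^\alpha}^2$ by a constant depending only on $R$, because the norm equivalence under the block ordering gives $\lVert p\rVert_{\mathcal H^\alpha}^2\asymp\sum_j(1+q_j)^{2\alpha}\theta_j(p)^2$. Also $m\le p_0\le M$. Take $F(y)=\sqrt y$, which is smooth on an open interval around $[m,M]$ since $m>0$. Theorem~\ref{thm:sobolev-composition} then gives $\lVert\sqrt{p_0}\rVert_{\mathcal H^\alpha}\le C$ uniformly over the class. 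For the normalizing constant, $c_0=\int\sqrt{p_0}\ge\sqrt m>0$, so $\lVert g_0\rVert_{\mathcal H^\alpha}\le C/\sqrt m$.

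\emph{Step 2: $\mathcal L^2$ projection error.} Write $g_0=\sum_j\gamma_j\varphi_j$. The functions with $q_j>J$ carry weight $(1+q_j)^{2\alpha}\ge(1+J)^{2\alpha}$, so
\[
\lVert g_0-g_{0,J}\rVert_2^2=\sum_{q_j>J}\gamma_j^2\le(1+J)^{-2\alpha}\sum_j(1+q_j)^{2\alpha}\gamma_j^2\le CJ^{-2\alpha}.
\]

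\emph{Step 3: sup-norm projection error.} Since $\lVert\varphi_j\rVert_\infty\le 2^{d/2}$, the series bound and Cauchy--Schwarz give
\[
\lVert g_0-g_{0,J}\rVert_\infty\le 2^{d/2}\sum_{q_j>J}|\gamma_j|
\le 2^{d/2}\Bigl(\sum_j(1+q_j)^{2\alpha}\gamma_j^2\Bigr)^{1/2}\Bigl(\sum_{q_j>J}(1+q_j)^{-2\alpha}\Bigr)^{1/2}.
\]
The number of indices with $q_j=q$ is $(2q+1)^d-(2q-1)^d\lesssim q^{d-1}$. Hence the last sum is at most $C\sum_{q>J}q^{d-1-2\alpha}\le CJ^{d-2\alpha}$, and this sum converges because $\alpha>d/2$. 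This yields $CJ^{d/2-\alpha}$. Absolute convergence of the series also justifies pointwise equality of $g_0$ with its Fourier series for the continuous representative.

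\emph{Step 4: uniform sup-norm bound on the projections.} First, $\lVert g_0\rVert_\infty=\lVert\sqrt{p_0}\rVert_\infty/c_0\le\sqrt{M/m}$. Step 3 with $J=0$ bounds the full coefficient series in absolute value. Then $\lVert g_{0,J}\rVert_\infty\le 2^{d/2}\sum_j|\gamma_j|\le C$ uniformly in $J$, and no Dirichlet-kernel Lebesgue constants are needed.

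The main obstacle is Step 1. It requires that the composition estimate is uniform over the class, which is what Theorem~\ref{thm:sobolev-composition} provides for bounded sets, and that the block-ordered weighted sum is equivalent to the periodic $\mathcal H^\alpha$ norm. Everything after Step 1 is elementary coefficient counting.
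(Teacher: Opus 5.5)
Your proposal is correct and follows essentially the same route as the paper. It obtains the $\mathcal H^\alpha$ bound from Theorem~\ref{thm:sobolev-composition} with $c_0\ge\sqrt m$, the $\mathcal L^2$ tail from the weighted coefficient sum, the sup-norm tail by Cauchy--Schwarz with the $O(q^{d-1})$ block count, and the uniform bound on $g_{0,J}$ from the same calculation without the tail restriction. Your remark that absolute convergence identifies the continuous representative of $g_0$ with its Fourier series is a small detail the paper leaves implicit.
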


\begin{proof}
The bounds $m\le p_0\le M$ imply $\sqrt m\le\sqrt{p_0}\le\sqrt M$ and $\sqrt m\le c_0\le1$. Theorem~\ref{thm:sobolev-composition} gives the uniform $\mathcal H^\alpha$ bound for $g_0$. The $\mathcal L^2$ projection tail follows from the weighted coefficient bound. For the supremum norm, the Cauchy--Schwarz inequality gives
\begin{equation}
 \sup_{x\in\mathcal X}\sum_{\{j\ge2:q_j>J\}}|\theta_j(g_0)\varphi_j(x)|
 \le C\left\{\sum_{\{j\ge2:q_j>J\}}(1+q_j)^{-2\alpha}\right\}^{1/2}
 \le CJ^{d/2-\alpha},
\end{equation}
because the number of product basis functions in block $q$ is $O(q^{d-1})$. The same calculation without the tail restriction gives the uniform bound for the partial projections.
\end{proof}

\begin{lemma}[Local Lipschitz property of the normalized-square map]
\label{lem:density-square-map}
For $g\in\mathcal L^4(\mathcal X)\setminus\{0\}$, let $T(g)=g^2/\lVert g\rVert_2^2$. There are $C,\delta>0$ such that, for all sufficiently large $n$, for $J=J_n$, and for every $h\in\mathcal V_J$ with $\lVert h-g_{0,J}\rVert_2\le\delta\epsilon_n$,
\begin{equation}
  \lVert T(h)-p_0\rVert_2\le C\epsilon_n,
  \label{eq:density-local-map}
\end{equation}
uniformly over the target class.
\end{lemma}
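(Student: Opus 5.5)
The plan is to split $T(h)-p_0=\{T(h)-T(g_{0,J})\}+\{T(g_{0,J})-T(g_0)\}$, using $T(g_0)=p_0$. Each piece is handled by a bilinear identity for squares and an elementary bound for the normalizing constants. For $g,h$ with $\lVert g\rVert_2,\lVert h\rVert_2$ bounded below, write
\begin{equation}
  T(h)-T(g)=\frac{h^2-g^2}{\lVert h\rVert_2^2}+g^2\left(\frac{1}{\lVert h\rVert_2^2}-\frac{1}{\lVert g\rVert_2^2}\right),
  \qquad h^2-g^2=(h-g)(h+g).
\end{equation}
Since the constant coefficient of $g_0$ and of $g_{0,J}$ equals one, and $h\in\mathcal V_J$ is within $\delta\epsilon_n\to0$ of $g_{0,J}$, all three norms are at least $1/2$ for large $n$. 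The normalizer difference is at most $C\lVert h-g\rVert_2(\lVert h\rVert_2+\lVert g\rVert_2)$, and $\lVert g^2\rVert_2\le\lVert g\rVert_\infty\lVert g\rVert_2$.

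For the bias term, take $g=g_0$ and $h=g_{0,J}$. Lemma~\ref{lem:density-projection} gives $\lVert g_{0,J}\rVert_\infty+\lVert g_0\rVert_\infty\le C$. Hence $\lVert(g_{0,J}-g_0)(g_{0,J}+g_0)\rVert_2\le C\lVert g_{0,J}-g_0\rVert_2\le CJ^{-\alpha}$. The normalizer part gives the same order, so this piece is $O(J_n^{-\alpha})=O(\epsilon_n)$. This also yields \eqref{eq:density-square-root-bias}.

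For the stochastic term, take $g=g_{0,J}$ and $h\in\mathcal V_J$ with $\lVert h-g_{0,J}\rVert_2\le\delta\epsilon_n$. The difficulty is that $h$ is not uniformly bounded a priori, so $\lVert(h-g)(h+g)\rVert_2$ is not immediately controlled by $\lVert h-g\rVert_2$. I will use the Nikolskii inequality on $\mathcal V_J$: for $u\in\mathcal V_J$, Cauchy--Schwarz over the $D_J$ orthonormal basis functions with $|\varphi_j|\le2^{d/2}$ gives $\lVert u\rVert_\infty\le 2^{d/2}D_J^{1/2}\lVert u\rVert_2$. Then
\begin{equation}
  \lVert(h-g)(h+g)\rVert_2\le\lVert h-g\rVert_2\,(2\lVert g\rVert_\infty+\lVert h-g\rVert_\infty)
  \le C\delta\epsilon_n\,(1+D_{J_n}^{1/2}\delta\epsilon_n).
\end{equation}
This is where $\alpha>d/2$ enters. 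Since $D_{J_n}^{1/2}\epsilon_n\asymp J_n^{d/2-\alpha}\to0$, the second factor is bounded, and the term is $O(\epsilon_n)$. The normalizer part is handled as before, using $\lVert g_{0,J}\rVert_\infty\le C$. Adding the two pieces proves \eqref{eq:density-local-map} with constants uniform over $\mathcal P_\alpha(R,m,M)$, because every ingredient depends only on the uniform bounds of Lemma~\ref{lem:density-projection}.

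The main obstacle is this sup-norm control of $h$ via the Nikolskii bound. It is the only step that needs the smoothness restriction, and the key check is that $\sqrt{D_{J_n}}\,\epsilon_n\to0$. That decay holds exactly because $\alpha>d/2$.
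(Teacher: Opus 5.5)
Your proposal is correct and follows the paper's proof essentially step for step. Both arguments use the same split through $T(g_{0,J})$, the same identity for $T(h)-T(g)$, and the same sup-norm control of $h-g_{0,J}\in\mathcal V_J$ via Cauchy--Schwarz, with $\sqrt{D_{J_n}}\,\epsilon_n\asymp J_n^{d/2-\alpha}\to0$; the only cosmetic difference is that the paper uses the exact identity $\sum_{j\le D_J}\varphi_j(x)^2=D_J$, which removes your factor $2^{d/2}$ (note also that $\alpha>d/2$ is used upstream, in the bound $\sup_J\lVert g_{0,J}\rVert_\infty\le C$ from Lemma~\ref{lem:density-projection}, so it is not the only place the restriction enters).
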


\begin{proof}
The constant coefficient of both $g_0$ and $g_{0,J}$ is one, so their $\mathcal L^2$ norms are at least one. Lemma~\ref{lem:density-projection} gives uniform $\mathcal L^\infty$ bounds for $g_{0,J}$. For $h-g_{0,J}\in\mathcal V_J$, the identity $\sum_{j=1}^{D_J}\varphi_j(x)^2=D_J$ gives
\begin{equation}
  \lVert h-g_{0,J}\rVert_\infty\le\sqrt{D_J}\lVert h-g_{0,J}\rVert_2
  \le C J^{d/2-\alpha},
\end{equation}
which tends to zero. Also $\lVert h\rVert_2\ge\lVert g_{0,J}\rVert_2-\delta\epsilon_n\ge1/2$ for large $n$. For any $h$ and $g$ whose $\mathcal L^\infty$ norms are uniformly bounded and whose $\mathcal L^2$ norms are bounded away from zero,
\begin{equation}
\begin{aligned}
  T(h)-T(g)
  &=\frac{(h-g)(h+g)}{\lVert h\rVert_2^2}
    +g^2\frac{\lVert g\rVert_2^2-\lVert h\rVert_2^2}
    {\lVert h\rVert_2^2\lVert g\rVert_2^2}.
\end{aligned}
\end{equation}
The uniform $\mathcal L^\infty$ bounds, the lower bounds on the $\mathcal L^2$ norms, and
$|\lVert h\rVert_2^2-\lVert g\rVert_2^2|\le(\lVert h\rVert_2+\lVert g\rVert_2)\lVert h-g\rVert_2$ therefore give
\begin{equation}
  \lVert T(h)-T(g)\rVert_2\le C\lVert h-g\rVert_2.
\end{equation}
Apply this first to $h,g_{0,J}$ and then to $g_{0,J},g_0$, using Lemma~\ref{lem:density-projection} and $T(g_0)=p_0$.
\end{proof}

\begin{proof}[Proof of Proposition~\ref{prop:density-simulator}]
Put $J=J_n$. For the full product trigonometric block,
\begin{equation}
  \sum_{j=1}^{D_J}\varphi_j(x)^2
  =\prod_{r=1}^d\left[1+2\sum_{q=1}^J\{\cos^2(2\pi qx_r)+\sin^2(2\pi qx_r)\}\right]
  =(2J+1)^d=D_J.
\end{equation}
The Cauchy--Schwarz inequality gives $g_v(x)^2\le D_J\lVert g_v\rVert_2^2$, proving \eqref{eq:density-envelope}. Let $U\sim\mathrm{Unif}([0,1]^d)$ and $V\sim\mathrm{Unif}(0,1)$ be independent. The draw $U$ is accepted when $V\le p_v(U)/D_J$. For every measurable $B\subset[0,1]^d$,
\begin{equation}
  \mathrm P\{U\in B,\ V\le p_v(U)/D_J\}
  =D_J^{-1}\int_B p_v(x)\,\mathrm dx.
\end{equation}
The total acceptance probability is therefore $D_J^{-1}$, and the conditional law of an accepted draw has density $p_v$.
\end{proof}

\begin{proof}[Proof of Proposition~\ref{prop:density-local-mass}]
Put $J=J_n$. The approximation assertion follows from Lemmas~\ref{lem:density-projection} and~\ref{lem:density-square-map}. It remains to prove the coefficient prior-mass bound. Let $\widetilde K=D_J-1$ and let $\sigma_j=\Lambda(1+q_j)^{-\alpha-d/2}$. The Gaussian density on the ball $B_{\widetilde K}(v_{0,J},\delta\epsilon_n)$ is bounded below by
\begin{equation}
  (2\pi)^{-\widetilde K/2}\prod_{j=2}^{D_J}\sigma_j^{-1}
  \exp\left\{-\frac12\sup_{\lVert u-v_{0,J}\rVert_2\le\delta\epsilon_n}\sum_{j=2}^{D_J}u_j^2/\sigma_j^2\right\}.
\end{equation}
The Sobolev bound for $g_0$ gives
\begin{equation}
  \sum_{j=2}^{D_J}\frac{v_{0,J,j}^2}{\sigma_j^2}
  \le C J^d\sum_{j=2}^{D_J}(1+q_j)^{2\alpha}v_{0,J,j}^2\le C\widetilde K.
\end{equation}
For $\lVert u-v_{0,J}\rVert_2\le\delta\epsilon_n$, the coefficient perturbation satisfies $\sum_{j=2}^{D_J}(u_j-v_{0,J,j})^2/\sigma_j^2\le CJ^{2\alpha+d}\epsilon_n^2\le C\widetilde K$. For $q=1,\ldots,J$, the block $q$ contains
\begin{equation}
  N_q=(2q+1)^d-(2q-1)^d
\end{equation}
basis functions. Hence, by an integral comparison,
\begin{equation}
\begin{aligned}
  \sum_{j=2}^{D_J}\log(1+q_j)
  &=\sum_{q=1}^{J}N_q\log(1+q)
    =\widetilde K\log J+O(\widetilde K),\\
  \log \widetilde K&=d\log J+O(1).
\end{aligned}
\end{equation}
The logarithm of the Gaussian normalizing factor contributes $(\alpha+d/2)\widetilde K\log J+O(\widetilde K)$, while the logarithm of the ball volume contributes
\begin{equation}
  \widetilde K\log(\delta\epsilon_n)-\frac{\widetilde K}{2}\log \widetilde K+O(\widetilde K)
  =-(\alpha+d/2)\widetilde K\log J+O(\widetilde K).
\end{equation}
The logarithmic terms cancel, leaving a conditional probability at least $e^{-C\widetilde K}$ uniformly in $\Lambda$. Integration over the scale hyperprior gives \eqref{eq:density-square-root-local-mass}. Lemma~\ref{lem:density-square-map} supplies the local-distance assertion. The coefficient ball is compact, and the map $v\mapsto p_v$ is continuous from the finite-dimensional coefficient space to $\mathcal L^2$; its image is therefore compact and Borel. The pushforward measure of this image is at least the $\Gamma_n$ probability of the coefficient ball. Finally, $\widetilde K\asymp K_n$.
\end{proof}

\begin{proof}[Proof of Proposition~\ref{prop:decompound-stability}]
For $|z|,|w|\le1$, the fundamental theorem of calculus gives
\begin{equation}
  F_\beta(z)-F_\beta(w)
  =\beta e^{-\beta}(z-w)\int_0^1 e^{\beta\{w+t(z-w)\}}\,\mathrm dt.
  \label{eq:decompound-complex-difference}
\end{equation}
The line segment from $w$ to $z$ lies in the closed unit disk $\{\zeta\in\mathbb C:|\zeta|\le1\}$. Since $0<\beta<\pi/2$, for every $t\in[0,1]$,
\begin{equation}
\begin{aligned}
  \operatorname{Re}e^{\beta\{w+t(z-w)\}}
  &\ge e^{-\beta}\cos\beta>0,\\
  \left|e^{\beta\{w+t(z-w)\}}\right|&\le e^\beta.
\end{aligned}
\end{equation}
The modulus of the integral in \eqref{eq:decompound-complex-difference} is therefore between $e^{-\beta}\cos\beta$ and $e^\beta$. Hence
\begin{equation}
  c_\beta|z-w|\le|F_\beta(z)-F_\beta(w)|\le\beta|z-w|.
  \label{eq:decompound-complex-stability}
\end{equation}
Each $a_{\boldsymbol k}(p)$ has modulus at most one. For each one-dimensional frequency $q\ge1$,
\begin{equation*}
  e^{\pm2\pi\mathbf i qx}
  =\frac{t_{2q-1}(x)\pm\mathbf i\,t_{2q}(x)}{\sqrt2}.
\end{equation*}
This orthonormal change of basis replaces each sine--cosine pair by the positive- and negative-frequency exponentials and leaves the constant unchanged. Applying it in each coordinate shows that the product trigonometric basis and the complex exponential basis are orthonormal bases of the space obtained from $\mathcal V_{2J}$ by allowing complex coefficients. Changing between them preserves the sum of squared coefficient moduli. The constant coefficients cancel, because both densities integrate to one and both continuous aggregate components have mass $1-e^{-\beta}$. Thus
\begin{equation}
\begin{aligned}
  \lVert s_J(\beta,p)-s_J(\beta,\widetilde p)\rVert_2^2
  &=\sum_{\substack{\boldsymbol k\in\mathbb Z^d\\0<\lVert\boldsymbol k\rVert_\infty\le2J}}
  \left|F_\beta\{a_{\boldsymbol k}(p)\}-F_\beta\{a_{\boldsymbol k}(\widetilde p)\}\right|^2,\\
  \lVert \mathrm{Proj}_{2J}(p-\widetilde p)\rVert_2^2
  &=\sum_{\substack{\boldsymbol k\in\mathbb Z^d\\0<\lVert\boldsymbol k\rVert_\infty\le2J}}
  |a_{\boldsymbol k}(p)-a_{\boldsymbol k}(\widetilde p)|^2.
\end{aligned}
\end{equation}
Apply \eqref{eq:decompound-complex-stability} term by term and take square roots.
\end{proof}

\begin{proof}[Proof of Proposition~\ref{prop:decompound-joint-stability}]
For $|z|\le1$ and $\beta\in \mathcal I$, differentiation with respect to the real parameter $\beta$ gives
\begin{equation}
  \partial_\beta F_\beta(z)
  =e^{-\beta}\left\{z e^{\beta z}-(e^{\beta z}-1)\right\}.
\end{equation}
Using $e^{\beta z}-1=\beta z\int_0^1e^{t\beta z}\,\mathrm dt$ and $|e^{t\beta z}|\le e^{t\beta}$, we obtain
\begin{equation}
  |\partial_\beta F_\beta(z)|\le(1+\overline\beta)|z|.
  \label{eq:decompound-intensity-derivative}
\end{equation}
Integrate this bound between $\beta_0$ and $\beta$. The orthonormal change of basis used in the proof of Proposition~\ref{prop:decompound-stability} and Parseval's identity imply
\begin{equation}
\begin{aligned}
  \lVert s_J(\beta,p_0)-s_J(\beta_0,p_0)\rVert_2
  &\le (1+\overline\beta)|\beta-\beta_0|
  \left(\sum_{\substack{\boldsymbol k\in\mathbb Z^d\\0<\lVert\boldsymbol k\rVert_\infty\le2J}}
           |a_{\boldsymbol k}(p_0)|^2\right)^{1/2}\\
  &\le (1+\overline\beta)\lVert p_0-1\rVert_2|\beta-\beta_0|\\
  &\le C_M|\beta-\beta_0|,
\end{aligned}
\label{eq:decompound-intensity-sensitivity}
\end{equation}
where $C_M=(1+\overline\beta)\sqrt{M-1}$, since $\lVert p_0-1\rVert_2^2=\int_{\mathcal X} p_0(x)^2\,\mathrm dx-1\le M-1$. This comparison is at the bounded density $p_0$ and does not require a bound on $\lVert p\rVert_\infty$.

The zero-count coordinate satisfies
\begin{equation}
  e^{-\overline\beta}|\beta-\beta_0|
  \le|e^{-\beta}-e^{-\beta_0}|
  \le|\beta-\beta_0|.
  \label{eq:decompound-zero-stability}
\end{equation}
Also $c_\beta\ge c_{\mathcal I}$, where $c_{\mathcal I}=\underline\beta e^{-2\overline\beta}\cos\overline\beta>0$. Proposition~\ref{prop:decompound-stability}, applied at the common intensity $\beta$, and \eqref{eq:decompound-intensity-sensitivity} give
\begin{equation}
\begin{aligned}
 c_{\mathcal I}\lVert\mathrm{Proj}_{2J}(p-p_0)\rVert_2
 &\le\lVert s_J(\beta,p)-s_J(\beta_0,p_0)\rVert_2
       +C_M|\beta-\beta_0|\\
 &\le\left(1+C_M e^{\overline\beta}\right)
       \lVert t_J(\beta,p)-t_J(\beta_0,p_0)\rVert_2.
\end{aligned}
\end{equation}
Together with \eqref{eq:decompound-zero-stability}, this bounds
$|\beta-\beta_0|+\lVert\mathrm{Proj}_{2J}(p-p_0)\rVert_2$
by $\{e^{\overline\beta}+c_{\mathcal I}^{-1}(1+C_Me^{\overline\beta})\}$ times the joint summary distance. In the other direction, the same decomposition gives
\begin{equation}
\begin{aligned}
 \lVert t_J(\beta,p)-t_J(\beta_0,p_0)\rVert_2
 &\le |e^{-\beta}-e^{-\beta_0}|
       +\lVert s_J(\beta,p)-s_J(\beta_0,p_0)\rVert_2\\
 &\le (1+C_M)|\beta-\beta_0|
       +\overline\beta\lVert\mathrm{Proj}_{2J}(p-p_0)\rVert_2.
\end{aligned}
\end{equation}
Choosing $C_{\mathrm{st}}$ to dominate these fixed constants proves both inequalities in \eqref{eq:decompound-joint-stability}, uniformly in $J$.
\end{proof}

\begin{lemma}[Folded continuous component]
\label{lem:decompound-folding}
For $\beta>0$ and $p\in\mathcal D\cap\mathcal L^\infty(\mathcal X)$, map each coordinate of a total to its fractional part in $[0,1)$. Under this map, the continuous component of $\mathrm Q_{\beta,p}$ has density
\begin{equation}
  r_{\beta,p}^\circ(x)=e^{-\beta}\sum_{r=1}^{\infty}\frac{\beta^r}{r!}\,
  p^{(*_{\mathbb T}r)}(x),
  \qquad x\in[0,1)^d,
  \label{eq:decompound-folded-density}
\end{equation}
where $*_{\mathbb T}$ denotes convolution on the unit torus. This subprobability density satisfies
\begin{equation}
  \int_{[0,1)^d}r_{\beta,p}^\circ(x)\,\mathrm dx=1-e^{-\beta},
  \qquad
  \lVert r_{\beta,p}^\circ\rVert_\infty\le(1-e^{-\beta})\lVert p\rVert_\infty.
  \label{eq:decompound-folded-envelope}
\end{equation}
\end{lemma}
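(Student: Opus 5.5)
The plan is to condition on the count and push forward each convolution term separately. Write $\pi(x)$ for the coordinatewise fractional-part map $\mathbb R^d\to[0,1)^d$. The continuous component is $q_{\beta,p}(x)\,\mathrm dx=e^{-\beta}\sum_{r\ge1}\beta^r(r!)^{-1}p^{(*r)}(x)\,\mathrm dx$. Since all terms are nonnegative, Tonelli's theorem lets us interchange the sum with the pushforward under $\pi$. It therefore suffices to show, for each $r\ge1$, that the pushforward of $p^{(*r)}(x)\,\mathrm dx$ under $\pi$ is $p^{(*_{\mathbb T}r)}(u)\,\mathrm du$ on $[0,1)^d$.

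For this I would argue probabilistically. $p^{(*r)}$ is the density of $Y_1+\cdots+Y_r$ with $Y_k$ iid from $p$. The map $\pi$ is a group homomorphism from $\mathbb R^d$ onto the torus $\mathbb T^d=\mathbb R^d/\mathbb Z^d$. Hence $\pi(Y_1+\cdots+Y_r)=\pi(Y_1)\oplus\cdots\oplus\pi(Y_r)$, where $\oplus$ is addition mod one. Since $p$ is supported in $[0,1]^d$, each $\pi(Y_k)$ has density $p$ on the torus; the boundary is Lebesgue-null. The law of a sum of independent torus-valued variables with densities is the torus convolution of the densities, and induction on $r$ gives density $p^{(*_{\mathbb T}r)}$. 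Equivalently, one can check directly that $\sum_{a\in\mathbb Z^d}p^{(*r)}(u+a)=p^{(*_{\mathbb T}r)}(u)$ by Fubini and periodization; this also confirms the formula $r^\circ_{\beta,p}(u)=\sum_a q_{\beta,p}(u+a)$ quoted in the main text. Summing over $r$ with the Poisson weights yields \eqref{eq:decompound-folded-density}.

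For \eqref{eq:decompound-folded-envelope}, the mass identity follows because each $p^{(*_{\mathbb T}r)}$ integrates to one, and $e^{-\beta}\sum_{r\ge1}\beta^r/r!=1-e^{-\beta}$. For the sup bound, the key fact is that torus convolution with a probability density does not increase the sup norm: $|f*_{\mathbb T}p(u)|\le\lVert f\rVert_\infty\int p=\lVert f\rVert_\infty$. By induction, $\lVert p^{(*_{\mathbb T}r)}\rVert_\infty\le\lVert p\rVert_\infty$ for every $r\ge1$. Summing with the weights gives $\lVert r^\circ_{\beta,p}\rVert_\infty\le(1-e^{-\beta})\lVert p\rVert_\infty$.

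The main technical point is the measure-theoretic identification of the pushforward with the torus convolution. This includes handling the boundary of $[0,1]^d$ and justifying the interchange of the infinite sum with the pushforward. Both are routine by nonnegativity and because the boundaries are null sets. The rest is bookkeeping.
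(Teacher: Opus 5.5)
Your proposal is correct and takes essentially the same approach as the paper. You condition on the count, identify the folded law of an $r$-fold jump sum with the torus convolution $p^{(*_{\mathbb T}r)}$, sum with the Poisson weights via Tonelli, and get the envelope because torus convolution with a probability density does not increase the supremum norm. Your group-homomorphism phrasing of the folding step is a cleaner version of the paper's remark about integrating the product jump density and reducing the sum modulo $\mathbb Z^d$.
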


\begin{proof}
Conditionally on $C_i=r\ge1$, the total reduced modulo $\mathbb Z^d$ has density $p^{(*_{\mathbb T}r)}$; this follows by integrating the product jump density and taking the sum modulo $\mathbb Z^d$. Summing the nonnegative conditional densities over the Poisson probabilities proves \eqref{eq:decompound-folded-density} by Tonelli's theorem. Equivalently, $r_{\beta,p}^\circ(x)=\sum_{a\in\mathbb Z^d}q_{\beta,p}(x+a)$ almost everywhere. Convolution with a probability density on the torus does not increase the supremum norm, because
\begin{equation}
  \left|\int_{[0,1)^d} f(x-u)h(u)\,\mathrm du\right|
  \le\lVert f\rVert_\infty\int_{[0,1)^d}h(u)\,\mathrm du
  =\lVert f\rVert_\infty
\end{equation}
for a density $h$, with $f$ interpreted periodically. Every convolution in \eqref{eq:decompound-folded-density} integrates to one and has supremum norm at most $\lVert p\rVert_\infty$. Summing the weights gives \eqref{eq:decompound-folded-envelope}. In particular, for every bounded periodic function $h$,
\begin{equation}
  \int_{\mathbb R^d}\mathbf 1\{x\ne0\}h(x)^2\mathrm Q_{\beta,p}(\mathrm dx)
  =\int_{[0,1)^d}h(x)^2r_{\beta,p}^\circ(x)\,\mathrm dx.
  \label{eq:decompound-periodic-second}
\end{equation}
The atom at zero is excluded by the indicator, and the continuous component assigns zero mass to the point zero.
\end{proof}

For the uniform jump density $p=1$, $r_{\beta,p}^\circ=1-e^{-\beta}$. Put $K=D_{2J}-1$ and $u=\psi_{2J}(0)/\sqrt K$. Then the unmodified periodic statistic satisfies
\begin{equation}
  \operatorname{Var}_{\mathrm Q_{\beta,1}}\{u^\top\psi_{2J}(X_1)\}
  =e^{-\beta}(1-e^{-\beta})K+(1-e^{-\beta}).
\end{equation}
Indeed, its value at the atom is $\sqrt K$, and its first two moment integrals against the continuous component are zero and $1-e^{-\beta}$, respectively. This accounts for the zero-count correction in \eqref{eq:decompound-summary-center}.

\begin{proof}[Proof of Proposition~\ref{prop:density-concentration}]
Let $J=J_n$, $K=K_n$, and write a unit vector in $\mathbb R^{K+1}$ as $u=(a,v)$, where $a\in\mathbb R$, $v\in\mathbb R^K$, and $a^2+\lVert v\rVert_2^2=1$. The function $h_v=v^\top\psi_{2J}$ satisfies $\lVert h_v\rVert_2=\lVert v\rVert_2$ and $\lVert h_v\rVert_\infty\le\sqrt K\lVert v\rVert_2$. The two terms in
\begin{equation}
  u^\top\xi_J(X_1)=a\mathbf 1\{X_1=0\}
                  +\mathbf 1\{X_1\ne0\}h_v(X_1)
\end{equation}
have disjoint supports. Lemma~\ref{lem:decompound-folding} therefore gives, for bounded $p\in\mathcal D$,
\begin{equation}
\begin{aligned}
  \operatorname{Var}_{\mathrm Q_{\beta,p}}\{u^\top\xi_J(X_1)\}
  &\le a^2e^{-\beta}
      +\int_{[0,1)^d}h_v(x)^2r_{\beta,p}^\circ(x)\,\mathrm dx\\
  &\le a^2+\lVert p\rVert_\infty\lVert v\rVert_2^2
  \le\max\{1,\lVert p\rVert_\infty\}.
\end{aligned}
\label{eq:decompound-directional-variance}
\end{equation}
The identity $\sum_{j=2}^{D_{2J}}\varphi_j(x)^2=K$ also gives $\lVert\xi_J(x)\rVert_2\le\sqrt{K+1}$, and hence
\begin{equation}
 |u^\top\{\xi_J(X_1)-t_J(\beta,p)\}|\le2\sqrt{K+1}.
\end{equation}
These bounds are uniform over $\beta\in \mathcal I$ and do not assume independence between summary coordinates.

For the observed sample, put $W_i=\xi_J(X_i)-t_J(\beta_0,p_0)$. Apply Lemma~\ref{lem:bernstein-net} in dimension $K+1$, with sample size $n$, $V=M$, $H=2$, and $t=B\sqrt{K/n}$. Since $\sqrt{K(K+1)}\le\sqrt2K$,
\begin{equation}
\begin{aligned}
 &\mathrm P_{\beta_0,p_0}^{(n)}
 \left\{\lVert S_n(X)-t_J(\beta_0,p_0)\rVert_2>B\sqrt{K/n}\right\}\\
 &\qquad\le2\cdot5^{K+1}
 \exp\left\{-\frac{cB^2K}{M+2\sqrt2BK/\sqrt n}\right\}.
\end{aligned}
\end{equation}
Because $K\asymp n^{d/(2\alpha+d)}$ and $\alpha>d/2$, $K/\sqrt n\to0$. For each fixed $B$, the denominator is eventually at most $M+1$. The inequality $\log2+(K+1)\log5\le K\log50$ for $K\ge1$ shows that a sufficiently large fixed $B_0$ absorbs the prefactor into the exponential, giving \eqref{eq:density-observed-concentration} with a positive $c_{\mathrm{coef}}$ independent of $B,\beta_0,p_0$.

For synthetic data, $p\in\operatorname{supp}(\Pi_n)$ satisfies $p\le D_J$, and $D_J\ge1$. The centered vectors $\widetilde W_i=\xi_J(Z_i)-t_J(\beta,p)$ have the same almost-sure bound and directional variance at most $D_J$ by \eqref{eq:decompound-directional-variance}. Since $m_n\ge nD_J$, the same lemma gives
\begin{equation}
\begin{aligned}
 &\mathrm P_{\beta,p}^{(m_n)}
 \left\{\lVert S_{m_n}(Z)-t_J(\beta,p)\rVert_2>B\sqrt{K/n}\right\}\\
 &\qquad\le2\cdot5^{K+1}
 \exp\left\{-\frac{cB^2K}{1+2\sqrt2BK/(D_J\sqrt n)}\right\}.
\end{aligned}
\end{equation}
Here $K/D_J$ is bounded, so for every fixed $B$ the denominator is eventually at most two. Increasing $B_0$ and decreasing $c_{\mathrm{coef}}$ if necessary proves \eqref{eq:density-synthetic-concentration}, uniformly over $\beta\in \mathcal I$ and $p\in\operatorname{supp}(\Pi_n)$. The lower bound on $n$ in both inequalities may depend on $B$.
\end{proof}

\begin{proof}[Proof of Corollary~\ref{cor:density-synthetic-small-ball}]
Apply \eqref{eq:density-synthetic-concentration} with $B=B_0$. Since $K_n\to\infty$, the complementary probability is at most $1/2$ for all sufficiently large $n$, uniformly over $\beta\in \mathcal I$ and $p\in\operatorname{supp}(\Pi_n)$. Thus one may take $c_1=1/2$.
\end{proof}

\begin{proof}[Proof of Theorem~\ref{thm:density-ideal}]
We apply Theorem~\ref{thm:meta-ideal} to the joint parameter space $\Theta=\mathcal I\times\mathcal D$ with metric $d_\Theta$ in \eqref{eq:decompound-product-metric}, prior $\widetilde\Pi_n$, and target class $\mathcal I\times\mathcal P_\alpha(R,m,M)$. Let $J=J_n$, $K=K_n$, and choose a fixed $B_X\ge B_0$ from Proposition~\ref{prop:density-concentration}. Define
\begin{equation}
  \mathcal E_n=
  \left\{\lVert S_n(X)-t_J(\beta_0,p_0)\rVert_2\le B_X\sqrt{K/n}\right\}.
\end{equation}
Proposition~\ref{prop:density-concentration} gives (B1) of Condition~\ref{ass:meta}.

Let $\mathcal B_{n,p}(p_0)$ be the image of the closed coefficient ball in Proposition~\ref{prop:density-local-mass}. It is a compact Borel set with $\Pi_n\{\mathcal B_{n,p}(p_0)\}\ge e^{-C_0K}$ and $\lVert p-p_0\rVert_2\le C_*\epsilon_n$ throughout the set. Define the joint local set by
\begin{equation}
  \mathcal B_n(\beta_0,p_0)
  =\left(\mathcal I\cap[\beta_0-\epsilon_n,\beta_0+\epsilon_n]\right)
       \times\mathcal B_{n,p}(p_0).
\end{equation}
For $\epsilon_n\le\overline\beta-\underline\beta$, the intensity interval in this product has length at least $\epsilon_n$, including when $\beta_0$ is an endpoint of $\mathcal I$. Independence in the joint prior gives
\begin{equation}
  \widetilde\Pi_n\{\mathcal B_n(\beta_0,p_0)\}
  \ge w_-\epsilon_n e^{-C_0K}\ge e^{-C_\Pi K}
  \label{eq:decompound-joint-local-mass}
\end{equation}
for a fixed $C_\Pi$ and all sufficiently large $n$, because $\log(1/\epsilon_n)=O(\log n)=o(K)$. This proves (B2). The product-metric distance throughout this set is at most $(1+C_*)\epsilon_n$. Proposition~\ref{prop:decompound-joint-stability} gives
\begin{equation}
  \lVert t_J(\beta,p)-t_J(\beta_0,p_0)\rVert_2
  \le C_{\mathrm{st}}(1+C_*)\epsilon_n.
\end{equation}
Every density $p_v$ belongs to $\operatorname{supp}(\Pi_n)$: each open neighbourhood of $p_v$ has a nonempty open preimage under the continuous map $v\mapsto p_v$, and every conditional Gaussian coefficient law, hence also $\Gamma_n$, assigns positive probability to that preimage. On $\mathcal E_n$, the synthetic event
\begin{equation}
  \lVert S_{m_n}(Z)-t_J(\beta,p)\rVert_2\le B_0\sqrt{K/n}
\end{equation}
implies acceptance whenever $A>B_X+C_{\mathrm{st}}(1+C_*)+B_0$. Corollary~\ref{cor:density-synthetic-small-ball} bounds its probability below by $c_1>0$. Taking $C_L$ large enough for the local distance and this acceptance bound proves (B3).

Set $\mathcal G_n=\mathcal I\times\operatorname{supp}(\Pi_n)$, so (B5) holds. Every density component of $\mathcal G_n$ lies in $\mathcal V_{2J}$, whereas the target class gives
\begin{equation}
  \lVert p_0-\mathrm{Proj}_{2J}p_0\rVert_2
  \le R(1+2J)^{-\alpha}\le R\epsilon_n.
\end{equation}
For $(\beta,p)\in\mathcal G_n$, the triangle inequality and Proposition~\ref{prop:decompound-joint-stability} consequently imply
\begin{equation}
\begin{aligned}
 d_\Theta\left((\beta,p),(\beta_0,p_0)\right)
 &\le |\beta-\beta_0|+\lVert p-\mathrm{Proj}_{2J}p_0\rVert_2+R\epsilon_n\\
 &\le C_{\mathrm{st}}\lVert t_J(\beta,p)-t_J(\beta_0,p_0)\rVert_2+R\epsilon_n.
\end{aligned}
\end{equation}
If this parameter distance exceeds $L\epsilon_n$, acceptance on $\mathcal E_n$ forces
\begin{equation}
  \lVert S_{m_n}(Z)-t_J(\beta,p)\rVert_2
  >\left\{C_{\mathrm{st}}^{-1}(L-R)-B_X-A\right\}\epsilon_n.
\end{equation}
For every fixed sufficiently large $L$, apply \eqref{eq:density-synthetic-concentration} with $B=C_{\mathrm{st}}^{-1}(L-R)-B_X-A\ge B_0$, using $\epsilon_n\ge\sqrt{K/n}$. This is (B4), with
\begin{equation}
  C_D=c_{\mathrm{coef}}C_{\mathrm{st}}^{-2},
  \qquad C_S=R+C_{\mathrm{st}}(B_X+A).
\end{equation}
All constants and the required sample-size thresholds are uniform over $(\beta_0,p_0)\in \mathcal I\times\mathcal P_\alpha(R,m,M)$. Theorem~\ref{thm:meta-ideal} gives the joint posterior bound in the product metric on $\mathcal E_n$. Since $\lVert p-p_0\rVert_2\le d_\Theta((\beta,p),(\beta_0,p_0))$, the same bound applies to the density marginal outside $L\epsilon_n$. Finally, $\epsilon_n\asymp\rho_{n,\alpha,d}$ converts the radius after increasing the fixed threshold $L_0$, proving the stated marginal contraction.
\end{proof}

\begin{proof}[Proof of Proposition~\ref{prop:density-second-moment}]
Let $J=J_n$ and $K=K_n$. Fix a sufficiently large constant $L$ at the $\epsilon_n$ scale. By \eqref{eq:meta-ideal-on-event} and the proof of Theorem~\ref{thm:density-ideal}, the density-marginal posterior tail on $\mathcal E_n$ is bounded by $2e^{-\gamma_L K}$ for some $\gamma_L>0$ and all sufficiently large $n$, uniformly over $\beta_0\in \mathcal I$ and $p_0\in\mathcal P_\alpha(R,m,M)$. The support property following \eqref{eq:density-square-root-prior} implies
\begin{equation}
  \lVert p-p_0\rVert_2^2\le2\lVert p\rVert_2^2+2\lVert p_0\rVert_2^2
  \le2D_J+2M\le CK
\end{equation}
for every $p\in\operatorname{supp}(\Pi_n)$; the fallback density $p^\dagger=1$ satisfies the same bound. With positive normalizer the density marginal is absolutely continuous with respect to $\Pi_n$. Hence its posterior second moment is at most $L^2\epsilon_n^2+2CKe^{-\gamma_L K}$ on $\mathcal E_n$ and at most $CK$ on its complement. Taking expectation under $\mathrm P_{\beta_0,p_0}^{(n)}$, using the uniform bound on $\mathrm P_{\beta_0,p_0}^{(n)}(\mathcal E_n^c)$ and $Ke^{-cK}=O(\epsilon_n^2)$, proves \eqref{eq:density-second}.
\end{proof}

\begin{proof}[Proof of Corollary~\ref{cor:density-mean-risk}]
Proposition~\ref{prop:density-second-moment} gives Bochner integrability. Jensen's inequality in $\mathcal L^2$ proves the risk bound. The set $\mathcal D$ is closed and convex in $\mathcal L^2$, so the Bochner mean of a probability measure supported on $\mathcal D$ also belongs to $\mathcal D$.
\end{proof}

\begin{lemma}[Kullback--Leibler bound under compounding]
\label{lem:decompound-kl}
For jump densities $p,\widetilde p\in\mathcal D$ and the same fixed count intensity $\beta>0$,
\begin{equation}
  \mathrm{KL}(\mathrm Q_{\beta,p},\mathrm Q_{\beta,\widetilde p})\le\beta\,\mathrm{KL}(p,\widetilde p).
  \label{eq:decompound-kl}
\end{equation}
\end{lemma}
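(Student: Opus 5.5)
The plan is to view $\mathrm Q_{\beta,p}$ as the image of a richer, latent law under a measurable map and use the data-processing inequality for Kullback--Leibler divergence. For one period, let $C\sim\operatorname{Poisson}(\beta)$ and let $Y_1,Y_2,\ldots$ be iid with density $p$. Consider the latent vector $(C,Y_1,\ldots,Y_C)$, taking values in the disjoint union $\bigsqcup_{r\ge0}\mathcal X^r$. Write $\mathrm R_{\beta,p}$ for its law. The total $X=\sum_{k\le C}Y_k$ is a measurable function of this vector, and the same function is used for both $p$ and $\widetilde p$. Pushing forward cannot increase KL, so
\begin{equation}
  \mathrm{KL}(\mathrm Q_{\beta,p},\mathrm Q_{\beta,\widetilde p})\le\mathrm{KL}(\mathrm R_{\beta,p},\mathrm R_{\beta,\widetilde p}).
\end{equation}

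Next I compute the latent divergence exactly using the chain rule, conditioning on the count. The count marginal is $\operatorname{Poisson}(\beta)$ under both laws, because the intensity is common, so it contributes zero. Given $C=r$, the conditional laws are $p^{\otimes r}$ and $\widetilde p^{\otimes r}$, whose divergence is $r\,\mathrm{KL}(p,\widetilde p)$ by additivity over independent coordinates. Averaging over $r$ gives
\begin{equation}
  \mathrm{KL}(\mathrm R_{\beta,p},\mathrm R_{\beta,\widetilde p})=\mathrm E(C)\,\mathrm{KL}(p,\widetilde p)=\beta\,\mathrm{KL}(p,\widetilde p),
\end{equation}
which is \eqref{eq:decompound-kl}.

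The only point needing care is absolute continuity and measurability. If $\mathrm{KL}(p,\widetilde p)=\infty$, the bound is trivial. Otherwise $p\ll\widetilde p$, so $p^{\otimes r}\ll\widetilde p^{\otimes r}$ for every $r$. The latent density ratio on the $r$-th piece is then $\prod_{k\le r}p(y_k)/\widetilde p(y_k)$, with the Poisson weights cancelling, and this gives $\mathrm R_{\beta,p}\ll\mathrm R_{\beta,\widetilde p}$. Integrability of the positive and negative parts of the log-ratio follows from the finiteness of $\mathrm{KL}(p,\widetilde p)$ and $\mathrm E(C)<\infty$. For this I would use $\int p(\log(p/\widetilde p))_-\le\int(\widetilde p-p)_+\le1$ on each coordinate, so Tonelli's theorem and Fubini's theorem justify summing over $r$.

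I expect the main obstacle to be citing the data-processing inequality for general (possibly non-dominated) measurable maps. I would invoke it through the Donsker--Varadhan variational formula, or equivalently through the conditional-expectation form of the density ratio combined with Jensen's inequality for the convex function $x\log x$. This matches \citet[Lemma~1, inequality~(11)]{GugushviliVanderMeulenSpreij2015}, which the paper already cites.
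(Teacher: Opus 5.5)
Your proposal is correct and is essentially the paper's argument. You pass to the latent law of $(C,Y_1,\ldots,Y_C)$ on the disjoint union and compute its divergence exactly as $\sum_{r\ge0}e^{-\beta}\frac{\beta^r}{r!}\,r\,\mathrm{KL}(p,\widetilde p)=\beta\,\mathrm{KL}(p,\widetilde p)$ by conditioning on the common Poisson count. You then conclude by data processing for the measurable total map via conditional Jensen for $x\log x$, exactly as the paper does. Your integrability check, which bounds the negative part of the log-ratio by $\int(\widetilde p-p)_+$ per coordinate, makes explicit a detail the paper leaves implicit.
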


\begin{proof}
This is the fixed-intensity case of \citet[Lemma 1, inequality (11)]{GugushviliVanderMeulenSpreij2015}. It can also be seen directly by retaining the latent count and jumps. Let $\overline{\mathrm Q}_{\beta,p}$ be their joint law on the disjoint union $\bigcup_{r\ge0}\{r\}\times\mathcal X^r$, where $\mathcal X^0$ is a singleton. When $\mathrm{KL}(p,\widetilde p)<\infty$, conditioning on the count gives
\begin{equation}
  \mathrm{KL}(\overline{\mathrm Q}_{\beta,p},\overline{\mathrm Q}_{\beta,\widetilde p})
  =\sum_{r=0}^{\infty}e^{-\beta}\frac{\beta^r}{r!}\,
  r\,\mathrm{KL}(p,\widetilde p)
  =\beta\,\mathrm{KL}(p,\widetilde p).
\end{equation}
The total is a measurable function of this latent vector. Conditional Jensen's inequality for the convex function $x\log x$ shows that taking its distribution cannot increase Kullback--Leibler divergence, proving \eqref{eq:decompound-kl}. If $\mathrm{KL}(p,\widetilde p)=\infty$, the asserted upper bound holds in the extended sense. No smoothness or strict positivity beyond the absolute continuity implicit in a finite Kullback--Leibler divergence is needed.
\end{proof}

\begin{proof}[Proof of Theorem~\ref{thm:density-lower}]
For background on Sobolev density classes and minimax lower bounds, see \citet[Chapters 10 and 11]{Klemela2009}. Fix $\beta_\star\in \mathcal I$ and restrict to the submodel with intensity $\beta_\star$. We use a trigonometric hypercube within this submodel. For a large dyadic frequency level $J$, let $\mathcal K_J=\{2^J,\ldots,2^{J+1}-1\}^d$ index the cosine-product functions
\begin{equation}
  \zeta_{J,\boldsymbol q}(x)=\prod_{r=1}^d\sqrt2\cos(2\pi q_rx_r),
  \qquad 2^J\le q_r<2^{J+1}.
\end{equation}
They are orthonormal, have integral zero, have uniformly bounded supremum norm, and $|\mathcal K_J|=2^{Jd}$. For $\nu\in\{-1,1\}^{\mathcal K_J}$, set
\begin{equation}
  p_\nu=1+a_J\sum_{k\in\mathcal K_J}\nu_k\zeta_{J,k},
  \qquad
  a_J=c_a2^{-J(\alpha+d/2)}.
\end{equation}
For every selected basis function the block index satisfies $1+q_j\le2^{J+1}$. Thus
\begin{equation}
\begin{aligned}
  \sum_{j=2}^{\infty}(1+q_j)^{2\alpha}\theta_j(p_\nu)^2
  &\le 2^{2\alpha(J+1)}|\mathcal K_J|a_J^2
   =2^{2\alpha}c_a^2,\\
  \lVert p_\nu-1\rVert_\infty
  &\le 2^{d/2}|\mathcal K_J|a_J
   =2^{d/2}c_a2^{-J(\alpha-d/2)}.
\end{aligned}
\end{equation}
Choose $c_a>0$ so that $2^{2\alpha}c_a^2\le R^2$. Since $\alpha>d/2$ and $m<1<M$, the last bound is at most $\min\{1-m,M-1\}$ for all sufficiently large $J$, uniformly in $\nu$. Every $p_\nu$ then lies between $m$ and $M$ and integrates to one, so the whole cube is contained in $\mathcal P_\alpha(R,m,M)$.

For all vertices, orthonormality gives
\begin{equation}
  \lVert p_\nu-p_{\nu'}\rVert_2^2=4a_J^2H(\nu,\nu').
\end{equation}
In particular, neighboring densities differ by $2a_J\zeta_{J,k}$. Since every cube density is bounded below by $m$,
\begin{equation}
  \mathrm{KL}(p_\nu,p_{\nu^{(k)}})
  \le\int_{\mathcal X}\frac{(p_\nu(x)-p_{\nu^{(k)}}(x))^2}{p_{\nu^{(k)}}(x)}\,\mathrm dx
  \le 4a_J^2/m.
\end{equation}
For the full aggregate-observation model, Lemma~\ref{lem:decompound-kl} and independence across periods give
\begin{equation}
\begin{aligned}
  \mathrm{KL}(\mathrm P_{\beta_\star,p_\nu}^{(n)},\mathrm P_{\beta_\star,p_{\nu^{(k)}}}^{(n)})
  &=n\,\mathrm{KL}(\mathrm Q_{\beta_\star,p_\nu},\mathrm Q_{\beta_\star,p_{\nu^{(k)}}})\\
  &\le n\beta_\star\,\mathrm{KL}(p_\nu,p_{\nu^{(k)}})
  \le 4n\beta_\star a_J^2/m.
\end{aligned}
\end{equation}
Take
\begin{equation}
  J=\left\lfloor\frac{\log_2 n}{2\alpha+d}\right\rfloor,
  \qquad
  2^{J(2\alpha+d)}\le n<2^{(J+1)(2\alpha+d)}.
\end{equation}
Then $J\to\infty$ and $na_J^2\le2^{2\alpha+d}c_a^2$. Pinsker's inequality and the preceding Kullback--Leibler bound give
\begin{equation}
\begin{aligned}
  &\mathrm{TV}(\mathrm P_{\beta_\star,p_\nu}^{(n)},
               \mathrm P_{\beta_\star,p_{\nu^{(k)}}}^{(n)})\\
  &\quad\le\left\{\frac12\mathrm{KL}(\mathrm P_{\beta_\star,p_\nu}^{(n)},
                    \mathrm P_{\beta_\star,p_{\nu^{(k)}}}^{(n)})\right\}^{1/2}\\
  &\quad\le\left(\frac{2n\beta_\star a_J^2}{m}\right)^{1/2}
   \le c_a\left(\frac{2^{2\alpha+d+1}\beta_\star}{m}\right)^{1/2}.
\end{aligned}
\end{equation}
Reduce $c_a$, if necessary, so that $c_a^2\le m/(2^{2\alpha+d+3}\beta_\star)$. Every neighboring total variation distance is then at most $1/2$, and the target-class inclusion is preserved.

Theorem~\ref{thm:external-assouad} applies to the fixed-intensity hypercube with dimension $|\mathcal K_J|$, squared separation parameter $4a_J^2$, and $\eta=1/2$. Keeping the infimum over measurable estimators based on the full aggregate sample throughout gives
\begin{equation}
\begin{aligned}
  &\inf_{\widetilde p_n}\sup_{\substack{\beta_0\in\mathcal I\\p_0\in\mathcal P_\alpha(R,m,M)}}
    \mathrm E_{\beta_0,p_0}^{(n)}\lVert\widetilde p_n-p_0\rVert_2^2\\
  &\quad\ge\inf_{\widetilde p_n}\sup_{p_0\in\mathcal P_\alpha(R,m,M)}
    \mathrm E_{\beta_\star,p_0}^{(n)}\lVert\widetilde p_n-p_0\rVert_2^2\\
  &\quad\ge\inf_{\widetilde p_n}\max_{\nu\in\{-1,1\}^{\mathcal K_J}}
    \mathrm E_{\beta_\star,p_\nu}^{(n)}\lVert\widetilde p_n-p_\nu\rVert_2^2\\
  &\quad\ge\frac{|\mathcal K_J|(4a_J^2)}8\left(1-\frac12\right)
   =\frac{c_a^2}{4}2^{-2\alpha J}\\
  &\quad\ge\frac{c_a^2}{4}n^{-2\alpha/(2\alpha+d)}.
\end{aligned}
\end{equation}
The last inequality uses $2^J\le n^{1/(2\alpha+d)}$. The fixed value $\beta_\star$ specifies the submodel used in the lower bound; it is not an additional observation supplied to the estimator. The constant $c_a^2/4$ is positive and independent of $n$, proving the theorem.
\end{proof}

\begin{proof}[Proof of Theorem~\ref{thm:density-mc}]
The proof of Theorem~\ref{thm:density-ideal} verifies Condition~\ref{ass:meta} on the joint parameter space. Corollary~\ref{cor:meta-denom} therefore gives constants $\kappa,c'>0$ for the joint acceptance normalizer, uniformly over $\beta_0\in \mathcal I$ and $p_0\in\mathcal P_\alpha(R,m,M)$. For a Borel density set $B$, apply the conditional accepted-draw argument in \eqref{eq:proof-mc-mass} to $\mathcal I\times B$ and then take the density marginal:
\begin{equation}
 \mathrm E_{\mathrm{MC}}\{\widehat\Pi_{n,N}^{\mathrm{ABC}}(B\mid X)\mid X\}
 \le\Pi_n^{\mathrm{ABC}}(B\mid X)+(1-D_n(X))^{N_n}.
\end{equation}
The expected no-acceptance term is bounded uniformly by
\begin{equation}
 e^{-c'K_n}+\exp\left\{-N_ne^{-\kappa K_n}\right\}.
 \label{eq:decompound-no-acceptance}
\end{equation}
For $B=\{p:\lVert p-p_0\rVert_2>L\rho_{n,\alpha,d}\}$, Theorem~\ref{thm:density-ideal} shows that the sampling expectation of $\Pi_n^{\mathrm{ABC}}(B\mid X)$ tends to zero uniformly over $\beta_0\in \mathcal I$ and $p_0\in\mathcal P_\alpha(R,m,M)$. Both terms in \eqref{eq:decompound-no-acceptance} vanish under the stated budget, proving the claim.
\end{proof}

\begin{proof}[Proof of Corollary~\ref{cor:density-mc-risk}]
Conditional on $X$ and $A_{n,N}=r\ge1$, the accepted pairs are iid from $\widetilde\Pi_n^{\mathrm{ABC}}(\cdot\mid X)$. Their density components $F_1,\ldots,F_r$ are therefore iid from the marginal $\Pi_n^{\mathrm{ABC}}(\cdot\mid X)$. Jensen's inequality in $\mathcal L^2$ gives
\begin{equation}
\begin{aligned}
 &\mathrm E_{\mathrm{MC}}\left\{\left.
   \left\lVert r^{-1}\sum_{j=1}^r F_j-p_0\right\rVert_2^2
   \,\right|X,A_{n,N}=r\right\}\\
 &\qquad\le\int_{\mathcal D}\lVert p-p_0\rVert_2^2
              \Pi_n^{\mathrm{ABC}}(\mathrm dp\mid X).
\end{aligned}
\end{equation}
Proposition~\ref{prop:density-second-moment} bounds the sampling expectation of the right-hand side uniformly by $C\rho_{n,\alpha,d}^2$. On $A_{n,N}=0$, the empirical density mean is one, with squared loss
\begin{equation}
  \lVert1-p_0\rVert_2^2=\lVert p_0\rVert_2^2-1\le M-1.
\end{equation}
Average over the acceptance count and the observed data. The resulting risk is at most
\begin{equation}
 C\rho_{n,\alpha,d}^2
 +(M-1)\left(e^{-c'K_n}+\exp\left\{-N_ne^{-\kappa K_n}\right\}\right).
\end{equation}
Polynomial divergence of $K_n$ makes the first exponential $O(\rho_{n,\alpha,d}^2)$. Under the stated budget the second is at most $\rho_{n,\alpha,d}^{C_N}$, which is $O(\rho_{n,\alpha,d}^2)$ for sufficiently large $C_N$. This proves the density-risk bound.
\end{proof}

\section{Parametric minimax optimality results}
\label{app:parametric}

The following fixed-dimensional minimax results complement the parametric ABC analyses of \citet{FrazierMartinRobertRousseau2018} and \citet{LiFearnhead2018a,LiFearnhead2018b}.

\subsection{Parametric setup}

Let $d_\theta\in\mathbb N$, let $\Theta\subset\mathbb{R}^{d_\theta}$ be compact, and let $\Theta_0\subset\mathrm{int}(\Theta)$ have positive distance from the boundary.  For each $n$ and $\theta\in\Theta$, let $\mathrm{P}_\theta^{(n)}$ be the data law, and let $S_n(X^{(n)})\in\mathbb{R}^{d_\theta}$ be the ABC summary statistic.  The prior has density $\pi$ satisfying $0<\pi_{\min}\le\pi(\theta)\le\pi_{\max}<\infty$ on $\Theta$.  Fix a parametric fallback point $\theta_{\mathrm{par}}^\dagger\in\Theta$.

\begin{condition}[Uniform root-$n$ concentration and small-ball condition]
\label{ass:param-rootn}
There exist $C_S,c_S>0$ such that uniformly over $\theta\in\Theta$,
\begin{equation}
  \mathrm{P}_\theta^{(n)}\left\{\sqrt n\left\lVert S_n(X^{(n)})-\theta \right\rVert_2>t\right\}\le C_S e^{-c_S t^2},\qquad t\ge0,
  \label{eq:param-root-tail}
\end{equation}
\begin{equation}
  \mathrm{E}_\theta^{(n)}\left\lVert S_n(X^{(n)})-\theta \right\rVert_2^2\le C_S/n,
  \label{eq:param-root-second}
\end{equation}
and for every fixed $a>0$ there is $c_{\mathrm{sb}}(a)>0$ such that
\begin{equation}
  \inf_{\theta\in\Theta}\mathrm{P}_\theta^{(n)}\left\{\sqrt n\left\lVert S_n(X^{(n)})-\theta \right\rVert_2\le a\right\}\ge c_{\mathrm{sb}}(a)
  \label{eq:param-root-small}
\end{equation}
for all sufficiently large $n$.
\end{condition}

Set $\tau_n=A/\sqrt n$.  The ideal parametric ABC likelihood and posterior are
\begin{equation}
  L_{n,\mathrm{par}}^{\mathrm{ABC}}(\theta;X)=\mathrm{P}_\theta^{(n)}\left\{\left\lVert S_n(Z^{(n)})-S_n(X^{(n)}) \right\rVert_2\le A/\sqrt n\right\},
\end{equation}
\begin{equation}
  \Pi_{n,\mathrm{par}}^{\mathrm{ABC}}(B\mid X)=\frac{\int_B L_{n,\mathrm{par}}^{\mathrm{ABC}}(\theta;X)\pi(\theta)\mathrm d\theta}{\int_\Theta L_{n,\mathrm{par}}^{\mathrm{ABC}}(\theta;X)\pi(\theta)\mathrm d\theta}.
\end{equation}
Write the denominator of this posterior as
\begin{equation}
  D_{n,\mathrm{par}}(X)=\int_\Theta L_{n,\mathrm{par}}^{\mathrm{ABC}}(\theta;X)\pi(\theta)\mathrm d\theta.
\end{equation}
When $D_{n,\mathrm{par}}(X)=0$, set $\Pi_{n,\mathrm{par}}^{\mathrm{ABC}}(\cdot\mid X)=\delta_{\theta_{\mathrm{par}}^\dagger}$.  The corresponding rejection sampler with $N_n$ prior samples defines the empirical law $\widehat\Pi_{n,N,\mathrm{par}}^{\mathrm{ABC}}$ by the same construction as \eqref{eq:empirical-posterior-general}, using $\delta_{\theta_{\mathrm{par}}^\dagger}$ on the no-acceptance event and replacing $L_n^{\mathrm{ABC}}$ by $L_{n,\mathrm{par}}^{\mathrm{ABC}}$.

\subsection{Upper bound}

\begin{theorem}[Parametric ABC posterior second-moment upper bound]
\label{thm:param-upper}
Under Condition~\ref{ass:param-rootn}, for every fixed $A>0$,
\begin{equation}
  \sup_{\theta_0\in\Theta_0}\mathrm{E}_{\theta_0}^{(n)}\int_\Theta \left\lVert \theta-\theta_0 \right\rVert_2^2\Pi_{n,\mathrm{par}}^{\mathrm{ABC}}(\mathrm d\theta\mid X)
  \le C/n.
  \label{eq:param-upper-second}
\end{equation}
\end{theorem}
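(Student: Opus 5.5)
The plan is to bound the posterior second moment by the tail integral
\begin{equation}
  \int_\Theta \lVert\theta-\theta_0\rVert_2^2\,\Pi_{n,\mathrm{par}}^{\mathrm{ABC}}(\mathrm d\theta\mid X)
  =\int_0^{\mathrm{diam}(\Theta)^2}\Pi_{n,\mathrm{par}}^{\mathrm{ABC}}\{\lVert\theta-\theta_0\rVert_2^2>s\mid X\}\,\mathrm ds.
\end{equation}
After the change of variables $s=t^2/n$, it suffices to show that the expected posterior mass of $\{\lVert\theta-\theta_0\rVert_2>t/\sqrt n\}$ decays fast enough in $t$ that $\int_0^\infty t\cdot(\text{tail})\,\mathrm dt<\infty$, uniformly in $\theta_0\in\Theta_0$ and $n$. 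Because $\Theta$ is compact, the loss is bounded by $\mathrm{diam}(\Theta)^2$, and this same bound covers the fallback point.

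The argument runs on the event $\mathcal E_n(b)=\{\sqrt n\lVert S_n(X)-\theta_0\rVert_2\le b\}$, where $b\ge0$ is left free. The first step is a lower bound on the normalizer. Because $\Theta_0$ is at positive distance from $\partial\Theta$, the ball $\{\theta:\lVert\theta-\theta_0\rVert_2\le a/\sqrt n\}$ lies inside $\Theta$ for large $n$ and has prior mass at least $\pi_{\min}v_{d_\theta}a^{d_\theta}n^{-d_\theta/2}$. For $\theta$ in this ball, acceptance occurs whenever $\sqrt n\lVert S_n(Z)-\theta\rVert_2\le A-a-b$, which has probability at least $c_{\mathrm{sb}}(A-a-b)$ by \eqref{eq:param-root-small}. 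This needs $b<A$. To handle larger $b$, I would instead use the ball centred at $S_n(X)$ rather than at $\theta_0$: for any $\theta$ with $\sqrt n\lVert\theta-S_n(X)\rVert_2\le A/2$, acceptance occurs whenever $\sqrt n\lVert S_n(Z)-\theta\rVert_2\le A/2$. That gives
\begin{equation}
  D_{n,\mathrm{par}}(X)\ge c\,n^{-d_\theta/2}
\end{equation}
for every $b\le b_n=\sqrt n\,\mathrm{dist}(\Theta_0,\partial\Theta)/2$, since then the ball stays in $\Theta$. The constant $c$ does not depend on $b$.

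The second step is the numerator bound. If $\sqrt n\lVert\theta-\theta_0\rVert_2>t$ and $\sqrt n\lVert S_n(X)-\theta_0\rVert_2\le b$, then acceptance forces $\sqrt n\lVert S_n(Z)-\theta\rVert_2>t-b-A$. By \eqref{eq:param-root-tail} this has probability at most $C_Se^{-c_S(t-b-A)^2}$ once $t>b+A$. Dividing by the normalizer gives, on $\mathcal E_n(b)$,
\begin{equation}
  \Pi_{n,\mathrm{par}}^{\mathrm{ABC}}\{\sqrt n\lVert\theta-\theta_0\rVert_2>t\mid X\}\le C\,\min\bigl\{1,\,n^{d_\theta/2}e^{-c_S(t-b-A)^2}\bigr\}.
\end{equation}

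The main obstacle is the polynomial factor $n^{d_\theta/2}$. A crude bound would cost $\log n$ in the rate, so it cannot be absorbed naively. The fix is to use the same accepted-parameter ball for the numerator. The numerator integrates $L$ against the prior density, and since $\pi$ is bounded above by $\pi_{\max}$,
\begin{equation}
  \int_{\{\sqrt n\lVert\theta-\theta_0\rVert_2>t\}}L\,\pi\,\mathrm d\theta\le\pi_{\max}\int_{\mathbb R^{d_\theta}}\mathrm P_\theta\{\cdots\}\,\mathrm d\theta .
\end{equation}
After substituting $\theta=\theta_0+u/\sqrt n$, this integral carries its own factor $n^{-d_\theta/2}$, which cancels the one in the denominator. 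Concretely, for $|u|>t$ I bound the acceptance probability by $C_Se^{-c_S(|u|-b-A)_+^2}$ and integrate over $|u|>t$. This gives a posterior tail of order $e^{-c(t-b-A)^2}\,\mathrm{poly}(t)$ with no power of $n$.

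Finally, I integrate over $b$, i.e.\ over the law of $\sqrt n\lVert S_n(X)-\theta_0\rVert_2$. Its tail $C_Se^{-c_Sb^2}$ from \eqref{eq:param-root-tail} makes $\mathrm E_{\theta_0}\int t\,(\text{tail})\,\mathrm dt$ finite: the $t$-integral is $O((1+b)^2)$ and Gaussian tails in $b$ integrate polynomials. The event $b>b_n$ contributes at most $\mathrm{diam}(\Theta)^2C_Se^{-c_Sb_n^2}=o(1/n)$. Together these give \eqref{eq:param-upper-second}.
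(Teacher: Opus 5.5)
Your argument is correct. Its denominator step is exactly the paper's: the ball of radius $A/(2\sqrt n)$ about $S_n(X)$, the small-ball bound $c_{\mathrm{sb}}(A/2)$, and the restriction to $\lVert S_n(X)-\theta_0\rVert_2\le r_0/2$ with $r_0=\mathrm{dist}(\Theta_0,\Theta^c)$. The cancellation of $n^{-d_\theta/2}$ by rescaling is also the paper's. The difference is where you center the numerator.

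The paper centers at $s=S_n(X)$. Acceptance then gives $L_{n,\mathrm{par}}^{\mathrm{ABC}}(\theta;X)\le C\exp\{-c(\sqrt n\lVert\theta-s\rVert_2-A)_+^2\}$, an envelope that does not involve $b=\sqrt n\lVert s-\theta_0\rVert_2$. So the posterior second moment about $s$ is $O(1/n)$ deterministically on the good event. The split $\lVert\theta-\theta_0\rVert_2^2\le2\lVert\theta-s\rVert_2^2+2\lVert s-\theta_0\rVert_2^2$ together with \eqref{eq:param-root-second} then finishes the proof.

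Centering at $\theta_0$, as you do, puts $b$ into the envelope. That is why you need posterior tail bounds indexed by $b$, a $t$-integral, and a final integration over the law of $b$ via \eqref{eq:param-root-tail}. The layer-cake step is not actually needed. By Fubini, $\int_0^\infty 2t\int_{\lVert u\rVert_2>t}f(u)\,\mathrm du\,\mathrm dt=\int\lVert u\rVert_2^2 f(u)\,\mathrm du$. Integrating $\lVert u\rVert_2^2$ directly against your $\theta_0$-centered envelope gives $n$ times the posterior second moment bounded by $O((1+b)^{d_\theta+2})$, which is still integrable against sub-Gaussian tails in $b$.

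What your route buys is an explicit posterior tail bound of the form $C(1+t)^{d_\theta-1}e^{-c(t-b-A)^2}$ for $t>b+A$. This controls all posterior moments at once. After averaging over $b$, it also gives decay much faster than the $C/M_n^2$ in Corollary~\ref{cor:param-tightness}. Relying on \eqref{eq:param-root-tail} instead of \eqref{eq:param-root-second} is not a weakening of assumptions, since integrating the tail bound already yields the second-moment bound up to constants.
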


\begin{proof}
Let $s=S_n(X^{(n)})$ and let $r_0=\mathrm{dist}(\Theta_0,\Theta^c)>0$.  On the event $\left\lVert s-\theta_0 \right\rVert_2\le r_0/2$, the ball $B(s,A/(2\sqrt n))$ is contained in $\Theta$ for all large $n$.  For every $\theta$ in this ball, the small-ball condition gives
\begin{equation}
  L_{n,\mathrm{par}}^{\mathrm{ABC}}(\theta;X)\ge c_{\mathrm{sb}}(A/2).
\end{equation}
Thus the ABC denominator is at least $C_D n^{-d_\theta/2}$ on this event.  The event fails with probability at most $C_S e^{-c_S r_0^2 n/4}$ by \eqref{eq:param-root-tail}.

For the numerator around $s$, acceptance implies
\begin{equation}
  \sqrt n\left\lVert S_n(Z^{(n)})-\theta\right\rVert_2
  \ge (\sqrt n\lVert\theta-s\rVert_2-A)_+.
\end{equation}
If the right-hand side is positive, the event with a weak inequality is contained in the event in \eqref{eq:param-root-tail} with half this threshold; when it is zero, the likelihood is at most one. Thus, after changing constants, uniformly in $\theta$,
\begin{equation}
  L_{n,\mathrm{par}}^{\mathrm{ABC}}(\theta;X)
  \le C\exp\{-c(\sqrt n\left\lVert \theta-s \right\rVert_2-A)_+^2\}.
\end{equation}
Therefore, using $u=\sqrt n(\theta-s)$,
\begin{align}
  \int_\Theta \left\lVert \theta-s \right\rVert_2^2 L_{n,\mathrm{par}}^{\mathrm{ABC}}(\theta;X)\pi(\theta)\mathrm d\theta
  &\le C n^{-d_\theta/2-1}\int_{\mathbb R^{d_\theta}}\left\lVert u \right\rVert_2^2 e^{-c(\left\lVert u \right\rVert_2-A)_+^2}\mathrm du\\
  &\le C'n^{-d_\theta/2-1}.
\end{align}
Dividing by the denominator gives
\begin{equation}
  \int_\Theta \left\lVert \theta-s \right\rVert_2^2\Pi_{n,\mathrm{par}}^{\mathrm{ABC}}(\mathrm d\theta\mid X)\le C/n
\end{equation}
on the high-probability event.  On that event,
\begin{equation}
  \int_\Theta\left\lVert \theta-\theta_0 \right\rVert_2^2\Pi_{n,\mathrm{par}}^{\mathrm{ABC}}(\mathrm d\theta\mid X)
  \le \frac{2C}{n}+2\left\lVert s-\theta_0 \right\rVert_2^2.
\end{equation}
On the complementary event, both $\theta$ and $\theta_0$ belong to the compact set $\Theta$, so the posterior integral of $\left\lVert\theta-\theta_0\right\rVert_2^2$ is bounded by the squared diameter of $\Theta$.  Taking expectations, using \eqref{eq:param-root-second}, and using the exponentially small probability of the complementary event proves \eqref{eq:param-upper-second}.
\end{proof}

\begin{corollary}[Parametric posterior mean risk]
\label{cor:param-mean-risk}
Under the conditions of Theorem~\ref{thm:param-upper}, the ideal ABC posterior mean $\widehat\theta_{n,\mathrm{par}}^{\mathrm{ABC}}=\int_\Theta\theta\Pi_{n,\mathrm{par}}^{\mathrm{ABC}}(\mathrm d\theta\mid X)$ satisfies
\begin{equation}
  \sup_{\theta_0\in\Theta_0}\mathrm{E}_{\theta_0}^{(n)}\left\lVert \widehat\theta_{n,\mathrm{par}}^{\mathrm{ABC}}-\theta_0 \right\rVert_2^2\le C/n.
\end{equation}
\end{corollary}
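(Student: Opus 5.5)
The plan is to derive the corollary from Theorem~\ref{thm:param-upper} by Jensen's inequality, after checking that the posterior mean is well defined.

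First, well-definedness. On $\{D_{n,\mathrm{par}}(X)>0\}$ the ideal posterior $\Pi_{n,\mathrm{par}}^{\mathrm{ABC}}(\cdot\mid X)$ is a probability measure on the compact set $\Theta\subset\mathbb R^{d_\theta}$. The identity map is bounded on $\Theta$, so the mean $\widehat\theta_{n,\mathrm{par}}^{\mathrm{ABC}}$ exists and is measurable in $X$. Measurability follows from joint measurability of $(\theta,X)\mapsto L_{n,\mathrm{par}}^{\mathrm{ABC}}(\theta;X)$ and Fubini's theorem, applied to the numerator and to the denominator separately. On $\{D_{n,\mathrm{par}}(X)=0\}$ the posterior is $\delta_{\theta_{\mathrm{par}}^\dagger}$, so the mean equals $\theta_{\mathrm{par}}^\dagger\in\Theta$. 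In both cases the mean lies in the convex hull of $\Theta$.

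Second, the Jensen step. For each fixed $X$, convexity of $\theta\mapsto\lVert\theta-\theta_0\rVert_2^2$ gives
\begin{equation}
  \left\lVert\widehat\theta_{n,\mathrm{par}}^{\mathrm{ABC}}-\theta_0\right\rVert_2^2
  \le\int_\Theta\lVert\theta-\theta_0\rVert_2^2\,\Pi_{n,\mathrm{par}}^{\mathrm{ABC}}(\mathrm d\theta\mid X).
\end{equation}
This holds on the fallback event as well, because there both sides equal $\lVert\theta_{\mathrm{par}}^\dagger-\theta_0\rVert_2^2$.

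Third, take $\mathrm E_{\theta_0}^{(n)}$ of both sides and then the supremum over $\theta_0\in\Theta_0$. Theorem~\ref{thm:param-upper} bounds the right-hand side by $C/n$, which proves the claim.

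The only point needing care is that the fallback case is included in Theorem~\ref{thm:param-upper}'s bound. Its proof bounds the posterior second moment on the complementary event by the squared diameter of $\Theta$, and the fallback point lies in $\Theta$, so this case is covered. No real obstacle remains.
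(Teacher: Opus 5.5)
Your proposal is correct and follows the paper's own argument: the paper also proves this by applying Jensen's inequality to the squared norm under the ideal ABC posterior and then invoking Theorem~\ref{thm:param-upper}. Your well-definedness remarks (compactness of $\Theta$, and the fallback case, which Theorem~\ref{thm:param-upper} already covers since its bound applies to the posterior as defined, fallback included) are harmless additions that the paper leaves implicit.
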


\begin{proof}
This follows from Jensen's inequality and Theorem~\ref{thm:param-upper}.
\end{proof}

\begin{corollary}[Parametric posterior tightness]
\label{cor:param-tightness}
Under the conditions of Theorem~\ref{thm:param-upper}, for every $M_n\to\infty$,
\begin{equation}
  \sup_{\theta_0\in\Theta_0}\mathrm{E}_{\theta_0}^{(n)}\Pi_{n,\mathrm{par}}^{\mathrm{ABC}}\left\{\theta:\left\lVert \theta-\theta_0 \right\rVert_2>M_n/\sqrt n\mid X\right\}\le C/M_n^2\to0.
\end{equation}
\end{corollary}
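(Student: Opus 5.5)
The corollary follows from the second-moment bound in Theorem~\ref{thm:param-upper} and Markov's inequality, applied conditionally on $X$ and then averaged over the data. Fix $\theta_0\in\Theta_0$ and $M_n\to\infty$. For every realisation of $X$, including the fallback case $D_{n,\mathrm{par}}(X)=0$, $\Pi_{n,\mathrm{par}}^{\mathrm{ABC}}(\cdot\mid X)$ is a probability measure on $\Theta$. Markov's inequality under this measure therefore gives
\begin{equation}
  \Pi_{n,\mathrm{par}}^{\mathrm{ABC}}\left\{\theta:\lVert\theta-\theta_0\rVert_2>M_n/\sqrt n\mid X\right\}
  \le\frac{n}{M_n^2}\int_\Theta\lVert\theta-\theta_0\rVert_2^2\,\Pi_{n,\mathrm{par}}^{\mathrm{ABC}}(\mathrm d\theta\mid X).
\end{equation}

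Next, take $\mathrm E_{\theta_0}^{(n)}$ of both sides and apply \eqref{eq:param-upper-second}. The right-hand side becomes at most $(n/M_n^2)(C/n)=C/M_n^2$. This bound holds uniformly over $\theta_0\in\Theta_0$, because the constant in Theorem~\ref{thm:param-upper} is uniform. Taking the supremum over $\theta_0\in\Theta_0$ gives the displayed inequality, and $C/M_n^2\to0$ because $M_n\to\infty$.

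The only point needing care is that Theorem~\ref{thm:param-upper} is stated for all sufficiently large $n$, so the inequality with constant $C$ holds only beyond some $n_0$. For finitely many smaller $n$ the posterior probability is at most one. Enlarging $C$ to exceed $M_n^2$ at those $n$ would make $C$ depend on the sequence $(M_n)$, so I would instead read the inequality as holding for all sufficiently large $n$, which is all the limiting statement requires. Measurability of the posterior probability as a function of $X$ is inherited from the kernel construction already used in Theorem~\ref{thm:param-upper}.
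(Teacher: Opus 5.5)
Your proposal is correct and is the paper's proof: Markov's inequality under the ideal ABC posterior, then expectation over the data and the uniform second-moment bound of Theorem~\ref{thm:param-upper}. Your caveat about small $n$ is unnecessary. The posterior second moment is always at most the squared diameter of the compact set $\Theta$, so the constant in Theorem~\ref{thm:param-upper} can be enlarged, independently of $(M_n)$, until that bound holds for every $n$; the corollary then holds for every $n$ as well.
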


\begin{proof}
This follows from Markov's inequality under the posterior, followed by expectation over the observed data and Theorem~\ref{thm:param-upper}.
\end{proof}

\subsection{Lower bound and Monte Carlo estimation}

\begin{condition}[Local Kullback--Leibler regularity]
\label{ass:param-kl}
There are $\theta_\star\in\Theta_0$, $r_\star>0$, and $C_{\mathrm{KL}}<\infty$ such that $B(\theta_\star,r_\star)\subset\Theta_0$ and
\begin{equation}
  \mathrm{KL}(\mathrm{P}_\theta^{(n)},\mathrm{P}_\vartheta^{(n)})\le C_{\mathrm{KL}}n\left\lVert \theta-\vartheta \right\rVert_2^2
\end{equation}
for all $\theta,\vartheta\in B(\theta_\star,r_\star)$ and all sufficiently large $n$.
\end{condition}

\begin{theorem}[Parametric minimax lower bound]
\label{thm:param-lower}
Under Condition~\ref{ass:param-kl},
\begin{equation}
  \inf_{\widetilde\theta_n}\sup_{\theta_0\in\Theta_0}\mathrm{E}_{\theta_0}^{(n)}\left\lVert \widetilde\theta_n-\theta_0 \right\rVert_2^2\ge c/n
\end{equation}
for all sufficiently large $n$.
\end{theorem}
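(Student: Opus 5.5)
The plan is a Le Cam two-point argument using Theorem~\ref{thm:external-lecam}. Fix $\theta_\star$ and $r_\star$ as in Condition~\ref{ass:param-kl}, and pick any unit vector $e\in\mathbb R^{d_\theta}$. Set
\begin{equation}
  \theta_{0,n}=\theta_\star,
  \qquad
  \theta_{1,n}=\theta_\star+\frac{c_h}{\sqrt n}\,e,
\end{equation}
with a constant $c_h>0$ to be chosen. For all sufficiently large $n$, $c_h/\sqrt n<r_\star$, so both points lie in $B(\theta_\star,r_\star)\subset\Theta_0$. Their separation is $\lVert\theta_{0,n}-\theta_{1,n}\rVert_2^2=c_h^2/n$.

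Next I bound the divergence between the two hypotheses. Condition~\ref{ass:param-kl} gives
\begin{equation}
  \mathrm{KL}(\mathrm P_{\theta_{0,n}}^{(n)},\mathrm P_{\theta_{1,n}}^{(n)})\le C_{\mathrm{KL}}c_h^2 .
\end{equation}
Pinsker's inequality then yields $\mathrm{TV}\le(C_{\mathrm{KL}}c_h^2/2)^{1/2}$. Choosing $c_h^2\le 1/(2C_{\mathrm{KL}})$ makes the total variation at most $1/2$.

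Theorem~\ref{thm:external-lecam} now applies to every measurable estimator $\widetilde\theta_n$:
\begin{equation}
  \max_{j=0,1}\mathrm E_{\theta_{j,n}}^{(n)}\lVert\widetilde\theta_n-\theta_{j,n}\rVert_2^2\ge\frac{c_h^2}{8n}\Bigl(1-\frac12\Bigr)=\frac{c_h^2}{16n}.
\end{equation}
Both points belong to $\Theta_0$, so the supremum over $\Theta_0$ dominates this maximum. Taking the infimum over estimators gives the bound with $c=c_h^2/16$.

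No step is a genuine obstacle. The one thing to check is that the KL condition holds uniformly on the fixed ball for all large $n$. This ensures that the shrinking perturbation stays inside the ball where it applies, and that $c_h$ does not depend on $n$.
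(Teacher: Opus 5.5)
Your proposal is correct and matches the paper's proof essentially step for step. The paper uses the same Le Cam two-point construction: $\theta_\star$ and $\theta_\star+he_1/\sqrt n$, the Kullback--Leibler bound $C_{\mathrm{KL}}h^2$ from Condition~\ref{ass:param-kl}, Pinsker's inequality to make the total variation at most $1/2$, and Theorem~\ref{thm:external-lecam} to conclude.
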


\begin{proof}
Let $e_1$ be a unit vector, set $\vartheta_0=\theta_\star$, and set $\vartheta_1=\theta_\star+h e_1/\sqrt n$, with $h>0$ small.  Then $\vartheta_1\in\Theta_0$ for large $n$ and $\mathrm{KL}(\mathrm{P}_{\vartheta_0}^{(n)},\mathrm{P}_{\vartheta_1}^{(n)})\le C_{\mathrm{KL}}h^2$.  By Pinsker's inequality, $\mathrm{TV}(\mathrm{P}_{\vartheta_0}^{(n)},\mathrm{P}_{\vartheta_1}^{(n)})\le \{\mathrm{KL}(\mathrm{P}_{\vartheta_0}^{(n)},\mathrm{P}_{\vartheta_1}^{(n)})/2\}^{1/2}$, so choose $h$ small enough that this total variation distance is at most $1/2$.  Theorem~\ref{thm:external-lecam} gives
\begin{equation}
  \inf_{\widetilde\theta_n}\sup_{j=0,1}\mathrm{E}_{\vartheta_j}^{(n)}\left\lVert \widetilde\theta_n-\vartheta_j \right\rVert_2^2
  \ge \frac{\left\lVert \vartheta_1-\vartheta_0 \right\rVert_2^2}{8}\left(1-\mathrm{TV}(\mathrm{P}_{\vartheta_0}^{(n)},\mathrm{P}_{\vartheta_1}^{(n)})\right)
  \ge c/n.
\end{equation}
\end{proof}

\begin{corollary}[Parametric minimax-rate optimality]
\label{cor:param-minimax}
Under Conditions~\ref{ass:param-rootn} and~\ref{ass:param-kl}, the ideal ABC posterior mean is minimax-rate optimal over $\Theta_0$ at squared-risk rate $n^{-1}$.
\end{corollary}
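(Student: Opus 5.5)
The corollary follows by combining the two preceding results. Corollary~\ref{cor:param-mean-risk} gives the upper bound
\begin{equation}
  \sup_{\theta_0\in\Theta_0}\mathrm E_{\theta_0}^{(n)}\lVert\widehat\theta_{n,\mathrm{par}}^{\mathrm{ABC}}-\theta_0\rVert_2^2\le C/n
\end{equation}
under Condition~\ref{ass:param-rootn}. Theorem~\ref{thm:param-lower} gives the matching lower bound $c/n$ over all measurable estimators under Condition~\ref{ass:param-kl}.

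First I would check that $\widehat\theta_{n,\mathrm{par}}^{\mathrm{ABC}}$ is itself a measurable estimator. This lets the lower bound apply to it, so its worst-case risk is also at least $c/n$. The posterior mean is a ratio of integrals of the jointly measurable function $(\theta,x)\mapsto L_{n,\mathrm{par}}^{\mathrm{ABC}}(\theta;x)\pi(\theta)$, and Fubini gives measurability in $X$. On the event $D_{n,\mathrm{par}}(X)=0$ the estimator is the fixed fallback point, which is measurable. The mean lies in the convex hull of the compact set $\Theta$, so the squared loss is finite.

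Next I would chain the bounds for all sufficiently large $n$:
\begin{equation}
  c/n\le\inf_{\widetilde\theta_n}\sup_{\theta_0\in\Theta_0}\mathrm E_{\theta_0}^{(n)}\lVert\widetilde\theta_n-\theta_0\rVert_2^2
  \le\sup_{\theta_0\in\Theta_0}\mathrm E_{\theta_0}^{(n)}\lVert\widehat\theta_{n,\mathrm{par}}^{\mathrm{ABC}}-\theta_0\rVert_2^2\le C/n.
\end{equation}
This shows that the minimax risk over $\Theta_0$ is of order $n^{-1}$, so $\epsilon_n=n^{-1/2}$ is a minimax rate in the sense of Section~\ref{sec:notation} with $\mathcal F_n=\Theta_0$. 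The same chain shows that the ABC posterior mean attains $O(\epsilon_n^2)$ risk, which is the definition of minimax-rate optimality for the posterior mean.

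There is no real obstacle. The only point needing care is that both results concern the same class $\Theta_0$ and the same loss. They do: Theorem~\ref{thm:param-lower} takes its supremum over $\Theta_0$ because the hypotheses $\vartheta_0,\vartheta_1$ lie in $B(\theta_\star,r_\star)\subset\Theta_0$.
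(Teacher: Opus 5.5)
Your proposal is correct and follows the paper's proof, which simply combines Corollary~\ref{cor:param-mean-risk} for the upper bound with Theorem~\ref{thm:param-lower} for the lower bound over all measurable estimators. The paper leaves one step implicit: that the ABC posterior mean is itself a measurable estimator. You check it explicitly, and it is what lets its risk also bound the minimax risk from above, so that $n^{-1/2}$ is a minimax rate in the sense of Section~\ref{sec:notation}.
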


\begin{proof}
Corollary~\ref{cor:param-mean-risk} gives the matching upper bound, and Theorem~\ref{thm:param-lower} gives the minimax lower bound over all measurable estimators.
\end{proof}

\begin{theorem}[Parametric rejection-ABC posterior tightness]
\label{thm:param-mc}
Under Condition~\ref{ass:param-rootn}, if $N_n n^{-d_\theta/2}\to\infty$, then the empirical rejection-ABC posterior is root-$n$ tight in expectation: for every $M_n\to\infty$,
\begin{equation}
  \sup_{\theta_0\in\Theta_0}\mathrm{E}_{\theta_0}^{(n)}\mathrm{E}_{\mathrm{MC}}
  \widehat\Pi_{n,N,\mathrm{par}}^{\mathrm{ABC}}\left\{\theta:\left\lVert \theta-\theta_0 \right\rVert_2>M_n/\sqrt n\mid X\right\}\to0.
\end{equation}
\end{theorem}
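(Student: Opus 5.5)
The argument mirrors the proof of Theorem~\ref{thm:meta-mc}, with the exponential normalizer bound replaced by the polynomial bound $D_{n,\mathrm{par}}(X)\ge C_Dn^{-d_\theta/2}$ established inside the proof of Theorem~\ref{thm:param-upper}. Condition on $X$. The indicators $I_i$ are iid Bernoulli with success probability $D_{n,\mathrm{par}}(X)$. Conditional on $A_{n,N}=m\ge1$, the accepted parameters are iid from $\Pi_{n,\mathrm{par}}^{\mathrm{ABC}}(\cdot\mid X)$. Hence, for the set $B_n=\{\theta:\lVert\theta-\theta_0\rVert_2>M_n/\sqrt n\}$,
\begin{equation}
  \mathrm E_{\mathrm{MC}}\{\widehat\Pi_{n,N,\mathrm{par}}^{\mathrm{ABC}}(B_n\mid X)\mid X\}
  \le \Pi_{n,\mathrm{par}}^{\mathrm{ABC}}(B_n\mid X)+\{1-D_{n,\mathrm{par}}(X)\}^{N_n}.
\end{equation}
The term $(1-D)^{N_n}$ accounts for the fallback mass, which is at most one on the no-acceptance event.

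The first term is handled by Corollary~\ref{cor:param-tightness}. Its expectation is at most $C/M_n^2\to0$, uniformly over $\theta_0\in\Theta_0$.

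For the second term, let $\mathcal E_n=\{\lVert S_n(X^{(n)})-\theta_0\rVert_2\le r_0/2\}$, with $r_0=\mathrm{dist}(\Theta_0,\Theta^c)$. The proof of Theorem~\ref{thm:param-upper} shows that, on $\mathcal E_n$ and for all large $n$, the ball $B(s,A/(2\sqrt n))$ lies in $\Theta$. On this ball the small-ball condition \eqref{eq:param-root-small} gives $L_{n,\mathrm{par}}^{\mathrm{ABC}}\ge c_{\mathrm{sb}}(A/2)$. Integrating against $\pi\ge\pi_{\min}$ over the ball then yields $D_{n,\mathrm{par}}(X)\ge C_Dn^{-d_\theta/2}$, with $C_D$ depending only on $A,d_\theta,\pi_{\min}$ and $c_{\mathrm{sb}}(A/2)$. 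Consequently, on $\mathcal E_n$,
\begin{equation}
  \{1-D_{n,\mathrm{par}}(X)\}^{N_n}\le\exp\{-C_DN_nn^{-d_\theta/2}\}\to0
\end{equation}
under the budget assumption. Off $\mathcal E_n$, bound the term by one. By \eqref{eq:param-root-tail}, $\mathrm P_{\theta_0}^{(n)}(\mathcal E_n^c)\le C_Se^{-c_Sr_0^2n/4}$, again uniformly in $\theta_0$. Summing the three contributions completes the argument.

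The only delicate point is uniformity of the normalizer lower bound over $\theta_0\in\Theta_0$. Two facts must hold uniformly: the ball must stay inside $\Theta$ uniformly, and the constant $c_{\mathrm{sb}}$ must be uniform over $\theta\in\Theta$. The first follows because $r_0>0$ is fixed. The second is assumed in \eqref{eq:param-root-small}, whose infimum over $\theta\in\Theta$ handles parameters near the boundary as well. Apart from this, no step should present difficulty.
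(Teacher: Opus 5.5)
Your proposal is correct and matches the paper's proof. You use the same conditional bound, namely ideal-posterior mass plus the no-acceptance probability $\{1-D_{n,\mathrm{par}}(X)\}^{N_n}$. The first term is controlled by Corollary~\ref{cor:param-tightness}, and the second by the polynomial normalizer bound $D_{n,\mathrm{par}}(X)\ge C_Dn^{-d_\theta/2}$ from the proof of Theorem~\ref{thm:param-upper}, which holds on the event $\lVert S_n(X^{(n)})-\theta_0\rVert_2\le r_0/2$ whose complement is exponentially small.
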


\begin{proof}
Let $r_0=\mathrm{dist}(\Theta_0,\Theta^c)>0$.  For a measurable set $B$, the conditional argument in \eqref{eq:proof-mc-mass} gives
\begin{equation}
  \mathrm{E}_{\mathrm{MC}}\left\{\widehat\Pi_{n,N,\mathrm{par}}^{\mathrm{ABC}}(B\mid X)\mid X\right\}
  \le \Pi_{n,\mathrm{par}}^{\mathrm{ABC}}(B\mid X)+\mathrm{P}_{\mathrm{MC}}(A_{n,N}=0\mid X).
\end{equation}
The parametric denominator lower bound in the proof of Theorem~\ref{thm:param-upper} is $D_{n,\mathrm{par}}(X)\ge C_D n^{-d_\theta/2}$ on the event $\left\lVert S_n(X)-\theta_0 \right\rVert_2\le r_0/2$, whose complement has probability at most $C e^{-c n}$.  Hence
\begin{equation}
  \mathrm{E}_{\theta_0}^{(n)}\mathrm{P}_{\mathrm{MC}}(A_{n,N}=0\mid X)
  \le C e^{-c n}+\exp\{-C_D N_n n^{-d_\theta/2}\}.
\end{equation}
Take $B=\{\theta:\left\lVert\theta-\theta_0\right\rVert_2>M_n/\sqrt n\}$.  Corollary~\ref{cor:param-tightness} controls the ideal posterior term, and the displayed no-acceptance bound tends to zero when $N_n n^{-d_\theta/2}\to\infty$.
\end{proof}

\begin{corollary}[Parametric empirical posterior mean risk]
\label{cor:param-mc-mean}
Under Condition~\ref{ass:param-rootn}, suppose that $N_n n^{-d_\theta/2}\ge C_N\log n$, where $C_N$ is sufficiently large.  Then the empirical ABC posterior mean
\begin{equation}
  \widehat\theta_{n,N,\mathrm{par}}^{\mathrm{ABC}}
  =\int_\Theta\theta\,\widehat\Pi_{n,N,\mathrm{par}}^{\mathrm{ABC}}(\mathrm d\theta\mid X)
\end{equation}
satisfies
\begin{equation}
  \sup_{\theta_0\in\Theta_0}\mathrm{E}_{\theta_0}^{(n)}\mathrm{E}_{\mathrm{MC}}
  \left\lVert\widehat\theta_{n,N,\mathrm{par}}^{\mathrm{ABC}}-\theta_0\right\rVert_2^2
  \le C/n.
\end{equation}
\end{corollary}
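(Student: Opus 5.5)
The plan is to mirror the proof of Corollary~\ref{cor:meta-mc-mean} (and of Corollary~\ref{cor:density-mc-risk}), with the parametric denominator bound from the proof of Theorem~\ref{thm:param-upper} replacing Corollary~\ref{cor:meta-denom}. Condition on $X$ and split according to whether $A_{n,N}\ge1$ or $A_{n,N}=0$.

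\emph{Positive acceptance.} Conditional on $X$ and $A_{n,N}=m\ge1$, the accepted draws are iid from $\Pi_{n,\mathrm{par}}^{\mathrm{ABC}}(\cdot\mid X)$; this is the same observation used in \eqref{eq:proof-mc-mass}. Jensen's inequality then bounds the conditional expected squared loss of their average by
\begin{equation}
  \int_\Theta\lVert\theta-\theta_0\rVert_2^2\,\Pi_{n,\mathrm{par}}^{\mathrm{ABC}}(\mathrm d\theta\mid X).
\end{equation}
By Theorem~\ref{thm:param-upper}, the sampling expectation of this quantity is at most $C/n$, uniformly over $\theta_0\in\Theta_0$.

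\emph{No acceptance.} On $\{A_{n,N}=0\}$ the estimator equals $\theta_{\mathrm{par}}^\dagger\in\Theta$. Since $\Theta$ is compact, the squared loss is at most $\mathrm{diam}(\Theta)^2$. It therefore remains to show that $\mathrm E_{\theta_0}^{(n)}\mathrm P_{\mathrm{MC}}(A_{n,N}=0\mid X)=O(1/n)$.

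We use $\mathrm P_{\mathrm{MC}}(A_{n,N}=0\mid X)=(1-D_{n,\mathrm{par}}(X))^{N_n}$. On the event $\lVert S_n(X)-\theta_0\rVert_2\le r_0/2$, the proof of Theorem~\ref{thm:param-upper} gives $D_{n,\mathrm{par}}(X)\ge C_Dn^{-d_\theta/2}$, so
\begin{equation}
  (1-D_{n,\mathrm{par}}(X))^{N_n}\le\exp\{-C_DN_nn^{-d_\theta/2}\}\le n^{-C_DC_N}.
\end{equation}
This is at most $1/n$ once $C_N\ge 1/C_D$. The complementary event has probability at most $C_Se^{-c_Sr_0^2n/4}$ by \eqref{eq:param-root-tail}, which is $o(1/n)$. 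This is the same bound already displayed in the proof of Theorem~\ref{thm:param-mc}.

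Combining the two cases gives risk at most
\begin{equation}
  C/n+\mathrm{diam}(\Theta)^2\bigl(Ce^{-cn}+n^{-C_DC_N}\bigr)\le C'/n.
\end{equation}
Well-definedness is automatic: on $A_{n,N}\ge1$ the estimator is a finite average of points of $\Theta$, and otherwise it is the fallback point.

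There is no real obstacle. The only point needing care is the interpretation of ``$C_N$ sufficiently large'': the requirement is $C_N$ large relative to $1/C_D$, where $C_D$ is the denominator constant from the proof of Theorem~\ref{thm:param-upper}, which depends on $A$, $\pi_{\min}$ and $c_{\mathrm{sb}}(A/2)$. We also need the uniformity of that denominator bound over $\theta_0\in\Theta_0$. This uniformity holds because the ball $B(s,A/(2\sqrt n))\subset\Theta$ and the small-ball constant are uniform in $\theta$ by Condition~\ref{ass:param-rootn}.
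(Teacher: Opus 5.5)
Your proposal is correct and follows essentially the same route as the paper. On the positive-acceptance event you use Jensen's inequality together with Theorem~\ref{thm:param-upper}; on the no-acceptance event you bound the fallback loss by the squared diameter of $\Theta$, and you control the no-acceptance probability via the denominator bound $D_{n,\mathrm{par}}(X)\ge C_Dn^{-d_\theta/2}$, exactly as in the proof of Theorem~\ref{thm:param-mc} — your explicit requirement $C_N\ge 1/C_D$ simply makes precise the paper's ``$C_N$ sufficiently large.''
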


\begin{proof}
Condition on $X$ and on $A_{n,N}=m\ge1$.  The accepted parameter values $F_1,\ldots,F_m$ are iid with law $\Pi_{n,\mathrm{par}}^{\mathrm{ABC}}(\cdot\mid X)$.  Jensen's inequality and conditional expectation therefore give
\begin{equation}
  \mathrm{E}_{\mathrm{MC}}\left\{\left\lVert m^{-1}\sum_{j=1}^mF_j-\theta_0\right\rVert_2^2\mid X,A_{n,N}=m\right\}
  \le \int_\Theta\left\lVert\theta-\theta_0\right\rVert_2^2\Pi_{n,\mathrm{par}}^{\mathrm{ABC}}(\mathrm d\theta\mid X).
\end{equation}
On the no-acceptance event the empirical posterior mean is $\theta_{\mathrm{par}}^\dagger$, and its loss is bounded by the squared diameter of $\Theta$.  Taking expectations, applying Theorem~\ref{thm:param-upper}, and using the no-acceptance bound from the proof of Theorem~\ref{thm:param-mc} proves the result.  The latter bound is $O(n^{-1})$ when $N_n n^{-d_\theta/2}\ge C_N\log n$ and $C_N$ is sufficiently large.
\end{proof}

\end{document}